\setlist{itemsep=.5em} 
\setlist[itemize]{leftmargin=1.2cm}
\newtheorem{theorem}{Theorem}[section]
\newtheorem{lemma}[theorem]{Lemma}
\newtheorem{corollary}[theorem]{Corollary}
\newtheorem{proposition}[theorem]{Proposition}
\newtheorem{definition}[theorem]{Definition}
\newtheorem{example}[theorem]{Example}
\newtheorem{remark}[theorem]{Remark}
\begin{document}

\title{Multiplicative $n$-Hom-Lie color algebras}

\author{Ibrahima Bakayoko, \\
\footnotesize{D\'epartement de Math\'ematiques, Universit\'e de N'Z\'er\'ekor\'e,} \\
\footnotesize{BP 50 N'Z\'er\'ekor\'e, Guin\'ee.} \\
\footnotesize{\text{ibrahimabakayoko27@gmail.com}, }\\
Sergei Silvestrov, \\
\footnotesize{Division of Applied Mathematics, School of Education, Culture and Communication,} \\
\footnotesize{M\"alardalen University, Box 883, 72123 V{\"a}ster{\aa}s, Sweden.} \\
\footnotesize{\text{sergei.silvestrov@mdh.se}}}

\date{October 15, 2019}

\maketitle

\abstract{The purpose of this paper is to generalize some results on $n$-Lie algebras and $n$-Hom-Lie algebras to $n$-Hom-Lie color algebras.
Then we introduce and give some constructions of $n$-Hom-Lie color algebras.
}

\vspace{0.5cm}
\noindent \textbf{Keywords}: n-Hom-Lie color algebras, color modules, averaging, semi-morphism, morphism. \\
{\bf Mathematics Subject Classification (2010):} 17A40,17A42,17B30,17B55

\section{Introduction}
\label{BakayokoSilv:sec:introduction}

The investigations of various $q$-deformations (quantum deformations) of Lie algebras began a period of rapid expansion in 1980's stimulated by introduction of quantum groups motivated by applications to the quantum Yang-Baxter equation, quantum inverse scattering methods and constructions of the quantum deformations of universal enveloping algebras of semi-simple Lie algebras. In
\cite{ChapBaSilnhomliecolor:AizawaSaito,ChapBaSilnhomliecolor:ChaiElinPop,ChapBaSilnhomliecolor:ChaiIsLukPopPresn,ChapBaSilnhomliecolor:ChaiKuLuk,ChapBaSilnhomliecolor:ChaiPopPres,ChapBaSilnhomliecolor:CurtrZachos1,ChapBaSilnhomliecolor:DamKu,ChapBaSilnhomliecolor:DaskaloyannisGendefVir,ChapBaSilnhomliecolor:Hu,ChapBaSilnhomliecolor:Kassel92,ChapBaSilnhomliecolor:LiuKQuantumCentExt,ChapBaSilnhomliecolor:LiuKQCharQuantWittAlg,ChapBaSilnhomliecolor:LiuKQPhDthesis}
various versions of $q$-deformed Lie algebras appeared in physical contexts such as string theory, vertex models in conformal field theory, quantum mechanics and quantum field theory in the context of $q$-deformations of infinite-dimensional algebras, primarily the $q$-deformed Heisenberg algebras \cite{ChapBaSilnhomliecolor:HelSilbookqHeis}, $q$-deformed oscillator algebras and $q$-deformed Witt and $q$-deformed Virasoro algebras, and some interesting $q$-deformations of the Jacobi identity for Lie algebras in these $q$-deformed algebras were observed.

Hom-Lie algebras and more general quasi-hom-Lie algebras were introduced first by Larsson, Hartwig and Silvestrov \cite{ChapBaSilnhomliecolor:HLS}, where
the general quasi-deformations and discretizations of Lie algebras of vector fields using more general $\sigma$-derivations (twisted derivations) and a general method for construction of deformations of Witt and Virasoro type algebras based on twisted derivations have been developed,
initially motivated by the $q$-deformed Jacobi identities observed for the $q$-deformed algebras in physics, along with $q$-deformed versions of homological algebra and discrete modifications of differential calculi. The general abstract quasi-Lie algebras and the subclasses of quasi-Hom-Lie algebras and Hom-Lie algebras as well as their general color (graded) counterparts have been introduced in \cite{ChapBaSilnhomliecolor:HLS,ChapBaSilnhomliecolor:LS1,ChapBaSilnhomliecolor:LSGradedquasiLiealg,ChapBaSilnhomliecolor:Czech:witt,ChapBaSilnhomliecolor:LS2}.
Subsequently, various classes of hom-Lie admissible algebras have been considered in \cite{ChapBaSilnhomliecolor:ms:homstructure}. In particular, in \cite{ChapBaSilnhomliecolor:ms:homstructure}, the hom-associative algebras have been introduced and shown to be hom-Lie admissible, that is leading to hom-Lie algebras using commutator map as new product, and in this sense constituting a natural generalization of associative algebras, as Lie admissible algebras leading to Lie algebras via commutator map as new product.
In \cite{ChapBaSilnhomliecolor:ms:homstructure}, moreover several other interesting classes of hom-Lie admissible algebras generalising some classes of non-associative algebras, as well as examples of finite-dimensional hom-Lie algebras have been described. Since these pioneering works \cite{ChapBaSilnhomliecolor:HLS,ChapBaSilnhomliecolor:LS1,ChapBaSilnhomliecolor:LSGradedquasiLiealg,ChapBaSilnhomliecolor:LS2,ChapBaSilnhomliecolor:LS3,ChapBaSilnhomliecolor:ms:homstructure}, hom-algebra structures have developed in a popular broad area with increasing number of publications in various directions.
In hom-algebra structures, defining algebra identities are twisted by linear maps. This same idea was then generalized to various types of algebras in \cite{ChapBaSilnhomliecolor:ms:homstructure}. Hom-algebra structures of a given type include their classical counterparts and open more possibilities for deformations, extensions of cohomological structures and representations (see for example
\cite{ChapBaSilnhomliecolor:homdeformation,ChapBaSilnhomliecolor:Hombiliform,ChapBaSilnhomliecolor:LS1,ChapBaSilnhomliecolor:shenghomrep} and references therein).

Ternary algebras and more generally $n$-ary Lie algebras first appeared in Nambu's generalization of Hamiltonian mechanics, using a ternary generalization of Poisson algebras. The mathematical algebraic foundations of Nambu mechanics have been developed by Takhtajan and Daletskii in  \cite{ChapBaSilnhomliecolor:DalTakh,ChapBaSilnhomliecolor:Takhtajan:foundgenNambuMech,ChapBaSilnhomliecolor:Takhtajan:cohomology}. Filippov, in \cite{ChapBaSilnhomliecolor:Filippov:nLie} introduced $n$-Lie algebras and then Kasymov \cite{ChapBaSilnhomliecolor:Kasymov:nLie} deeper investigated their properties.
This approach uses the interpretation of Jacobi identity expressing the fact that  the adjoint map is a derivation. There is also another type of $n$-ary Lie algebras, in which the $n$-ary Jacobi identity is the sum over $S_{2n-1}$ instead of $S_3$ in the binary case. One reason for studying such algebras was that $n$-Lie algebras introduced by Filippov were mostly rigid, and these algebras offered more possibilities to this regard. Besides Nambu mechanics, $n$-Lie algebras revealed to have many applications in physics.

The $n$-Lie algebras found their applications in many fields of mathematics and Physics. For instance, Takhtajan has developed the foundations of
the theory of Nambu-Poisson manifolds \cite{ChapBaSilnhomliecolor:Takhtajan:foundgenNambuMech}.
The general cohomology theory for $n$-Lie algebras and Leibniz n-algebras was established in \cite{ChapBaSilnhomliecolor:RM}.
The structure and classification theory of finite dimensional $n$-Lie algebras was given by Ling \cite{ChapBaSilnhomliecolor:LW}  and many
other authors. For more details of the theory and applications of $n$-Lie algebras, see \cite{ChapBaSilnhomliecolor:aip:review}
and references therein.
Generalized derivations of Lie color algebras and $n$-ary (color) algebras have been studied in
\cite{ChapBaSilnhomliecolor:CM, ChapBaSilnhomliecolor:KI1, ChapBaSilnhomliecolor:KI2, ChapBaSilnhomliecolor:KI3, ChapBaSilnhomliecolor:KP}, and
generalizations of derivations in connection with extensions and enveloping algebras of hom-Lie color algebras have been considered in \cite{ChapBaSilnhomliecolor:envelopalgcolhomLiealg,ChapBaSilnhomliecolor:exthomLiecoloralg}.
Derivations, L-modules, L-comodules and Hom-Lie quasi-bialgebras have been considered in \cite{ChapBaSilnhomliecolor:IBLmodcomodhomLiequasibialg,ChapBaSilnhomliecolor:IBLaplacehomLiequasibialg}.
In \cite{ChapBaSilnhomliecolor:JM}, Leibnitz $n$-algebras have been studied.

Hom-type generalization of $n$-ary algebras, such as $n$-Hom-Lie algebras and other $n$-ary Hom algebras of Lie type and associative type, were introduced in \cite{ChapBaSilnhomliecolor:AtMaSi:GenNambuAlg}, by twisting the identities defining them using a set of linear maps, together with the particular case where all these maps are equal and are algebra morphisms.
A way to generate examples of such algebras from non Hom-algebras of the same type is introduced.
Further properties, construction methods, examples, cohomology and central extensions of $n$-ary Hom-algebras have been considered in
\cite{ChapBaSilnhomliecolor:akms:ternary,ChapBaSilnhomliecolor:ams:ternary,ChapBaSilnhomliecolor:ams:n,ChapBaSilnhomliecolor:km:nary,ChapBaSilnhomliecolor:kms:nhominduced,ChapBaSilnhomliecolor:YauGenCom,ChapBaSilnhomliecolor:YauHomNambuLie}.
The construction of $(n+1)$-Lie algebras induced by $n$-Lie algebras using combination of bracket multiplication with a trace, motivated by the work of Awata et al. \cite{ChapBaSilnhomliecolor:almy:quantnambu} on the quantization of the Nambu brackets, was generalized using the brackets of general Hom-Lie algebra or $n$-Hom-Lie and trace-like linear forms depending on the linear maps defining the Hom-Lie or $n$-Hom-Lie algebras \cite{ChapBaSilnhomliecolor:ams:ternary,ChapBaSilnhomliecolor:ams:n}.

Properties and classification of $n$-ary algebras, including solvability and nilpotency, were studied in
\cite{ChapBaSilnhomliecolor:Bai:rlz3,ChapBaSilnhomliecolor:Bai:n,ChapBaSilnhomliecolor:Bai:nLie:clas,ChapBaSilnhomliecolor:Bai:nLie:claschar2,ChapBaSilnhomliecolor:RL,ChapBaSilnhomliecolor:RM1,ChapBaSilnhomliecolor:RM2,ChapBaSilnhomliecolor:RB,ChapBaSilnhomliecolor:Kasymov:nLie}.
Kasymov \cite{ChapBaSilnhomliecolor:Kasymov:nLie} pointed out that $n$-ary multiplication allows for several different definitions of solvability and nilpotency in $n$-Lie algebras, and studied their properties. The aim of this paper is to extend these definitions and properties to $n$-Hom-Lie algebras and to apply them to the case of $(n+1)$-Hom-Lie algebras induced by $n$-Hom-Lie algebras.

The purpose of this paper is to generalize some results on either $n$-Lie algebras or $n$-Hom-Lie algebras  to the case of $n$-Hom-Lie color algebras.
Then we introduce and give some constructions $n$-Hom-Lie color algebras.
Section \ref{BakayokoSilvestrov:sec:preliminaries} contains some necessary important basic notions and notations on graded spaces and algebras and $n$-ary algebras and used in other sections. Section \ref{BakayokoSilvestrov:sec:constrnhomcoloralg} presents some useful methods for construction of $n$-Hom-Lie color algebras.
In Section \ref{BakayokoSilvestrov:sec:modulesnhomcoloralg}, Hom-modules over $n$-Hom-Lie color algebras are considered.
Section \ref{BakayokoSilvestrov:sec:genderivationsnhomcoloralg} is devoted to generalized derivation of color Hom-algebras and their color Hom-subalgebras.

Throughout this paper, all graded linear spaces are assumed to be over a field $\mathbb{K}$ of characteristic different from 2.

\section{Preliminaries}
\label{BakayokoSilvestrov:sec:preliminaries}
This section contains necessary important basic notions and notations on graded spaces and algebras and $n$-ary algebras used in other sections.

\begin{definition} Let $G$ be an abelian group.
 \begin{enumerate}   \index{Linear space!graded}
  \item [1)] A linear space $V$ is said to be a $G$-graded if, there exists a family $(V_a)_{a\in G}$ of linear
subspaces of $V$ such that
$$V=\bigoplus_{a\in G} V_a.$$
\item [2)]
\index{Homogeneous element}
An element $x\in V$ is said to be homogeneous of degree $a\in G$ if $x\in V_a$. We denote $\mathcal{H}(V)$ the set of all homogeneous elements
in $V$.
\item [3)] Let $V=\oplus_{a\in G} V_a$ and $V'=\oplus_{a\in G} V'_a$ be two $G$-graded linear spaces. A linear mapping $f : V\rightarrow V'$ is said
to be homogeneous of degree $b$ if
$$f(V_a)\subseteq  V'_{a+b}, \quad \text{ for all }\quad a\in G.$$
If, $f$ is homogeneous of degree zero i.e. $f(V_a)\subseteq V'_{a}$ holds for any $a\in G$, then $f$ is said to be even.
 \end{enumerate}
\end{definition}
\begin{definition}
  \begin{enumerate}   \index{Algebra!graded}
\item[1)] An algebra $(A, \cdot)$ is said to be $G$-graded if its underlying linear space is $G$-graded i.e. $A=\bigoplus_{a\in G}A_a$, and if furthermore
$$A_a\cdot A_b\subseteq A_{a+b}, \quad \text{ for all } \quad a, b\in G.$$
\item[2)] \index{Morphism!graded algebras} A morphism $f : A\rightarrow A'$
of $G$-graded algebras $A$ and $A'$
is by definition an algebra morphism from $A$ to $A'$ which is, in addition an even mapping.
  \end{enumerate}
\end{definition}

\begin{definition}
 Let $G$ be an abelian group. A map $\varepsilon :G\times G\rightarrow {\bf \mathbb{K}^*}$ is called a skew-symmetric bicharacter \index{Bicharacter!skew-symmetric}
 on $G$ if the following identities hold for all $a, b, c\in G$:
\begin{enumerate}
\item [(i)] $\varepsilon(a, b)\varepsilon(b, a)=1$,
\item [(ii)] $\varepsilon(a, b+c)=\varepsilon(a, b)\varepsilon(a, c)$,
\item [(iii)]$\varepsilon(a+b, c)=\varepsilon(a, c)\varepsilon(b, c)$,
\end{enumerate}
\end{definition}

If $x$ and $y$ are two homogeneous elements of degree $a$ and $b$ respectively and $\varepsilon$ is a skew-symmetric bicharacter,
then we shorten the notation by writing $\varepsilon(x, y)$ instead of $\varepsilon(a, b)$.

\begin{example} Some standard examples of skew-symmetric bicharacters are:
 \begin{enumerate}
\item [1)] $G=\mathbb{Z}_2,\quad \varepsilon(i, j)=(-1)^{ij}$, or more generally
\begin{gather*} G=\mathbb{Z}_2^n=\{(\alpha_1, \dots, \alpha_n)| \alpha_i\in\mathbb{Z}_2 \}, \\
\varepsilon((\alpha_1, \dots, \alpha_n), (\beta_1, \dots, \beta_n)):= (-1)^{\alpha_1\beta_1+\dots+\alpha_n\beta_n}.
\end{gather*}
\item [2)] $G=\mathbb{Z}_2\times\mathbb{Z}_2,\quad \varepsilon((i_1, i_2), (j_1, j_2)=(-1)^{i_1j_2-i_2j_1}$,
\item [3)] $G=\mathbb{Z}\times\mathbb{Z} ,\quad \varepsilon((i_1, i_2), (j_1, j_2))=(-1)^{(i_1+i_2)(j_1+j_2)}$,
\item [4)] $G=\{-1, +1\} , \quad\varepsilon(i, j)=(-1)^{(i-1)(j-1)/{4}}$.
\end{enumerate}
\end{example}

\begin{definition}
An $n$-Lie algebra \index{Algebra!n-Lie} is a linear spaces $V$ equipped with $n$-ary operation which is skew-symmetric
for any pair of variables and satisfies the following identity:
\begin{equation}
\begin{array}{l}
 [x_1, \dots, x_{n-1}, [y_1, \dots, y_n]] =\\
= \displaystyle \sum_{i=1}^n[y_1, \dots, y_{i-1}, [x_1, \dots, x_{n-1}, y_i], y_{i+1}, \dots y_{n}].
\end{array} \label{BakayokoSilvestrov:F}
\end{equation}
\end{definition}

\begin{definition}
 An $n$-Hom-Lie color algebra \index{Algebra!n-Hom-Lie color} is a graded linear space $L=\oplus L_a, a\in G$ with an $n$-linear map
 $[\cdot \dots, \cdot]: L\times\dots \times L\rightarrow L$, a bicharacter $\varepsilon : G\times G\rightarrow \bf K^*$ and an even linear map
$\alpha : L\rightarrow L$ such that
\begin{eqnarray}
[x_1, \dots, x_i, x_{i+1}, \dots, x_n]&=&-\varepsilon(x_i, x_{i+1})[x_1, \dots, x_{i+1}, x_i, \dots, x_n], \\
&& \quad \quad \quad \quad \quad \quad  i=1,2, \dots n-1. \nonumber
\end{eqnarray}
\begin{equation} %
\begin{array}{l}
[\alpha(x_1), \dots, \alpha(x_{n-1}), [y_1, y_2, \dots, y_{n}]] = \\
=\displaystyle \sum_{i=1}^n\varepsilon(X, Y_i)[\alpha(y_1), \dots, \alpha(y_{i-1}), [x_1, \dots, x_{n-1}, y_i], \alpha(y_{i+1}),\dots, \alpha(y_{n})]
\end{array}
\end{equation}
where $x_i, y_j\in\mathcal{H}(L)$, $X=\sum_{i=1}^{n-1}x_i$,  $Y_i=\sum_{j=1}^iy_{j-1}$ and $y_0=e$.
\end{definition}
\begin{remark}
 Whenever $n=2$ (resp. $n=3$) we recover Hom-Lie color algebras \index{Algebra!Hom-Lie color} \index{Algebra!ternary Hom-Lie color} (resp. ternairy Hom-Lie color algebras).
\end{remark}

\begin{remark}
\begin{enumerate}
\item [1)] When $\alpha=id$, we get $n$-Lie color algebra.
\item [2)] When $G=\{e\}$ and $\alpha=id$, we get $n$-Lie algebra.
\item [3)] When $G=\{e\}$ and $\alpha\neq id$, we get $n$-Hom-Lie algebra.
\end{enumerate}
\end{remark}

\begin{definition}
 A morphism   $f : (L, [\cdot, \dots, \cdot], \varepsilon, \alpha)\rightarrow (L', [\cdot, \dots, \cdot]', \varepsilon, \alpha')$ of an $n$-Hom-Lie color algebras
 is an even linear map $f : L\rightarrow L$ such that
$f\circ\alpha=\alpha\circ f$  and for any $x_i\in\mathcal{H}(L)$,
$$f([x_1, \dots, x_n])=[f(x_1), \dots, f(x_n)]'$$
\end{definition}

\begin{definition}
1) An $n$-Hom-Lie color algebra $(L, [\cdot, \dots, \cdot], \varepsilon, \alpha)$ is said to be multiplicative \index{Algebra!multiplicative n-Hom-Lie color} if $\alpha$ is an endomorphism, i.e. a linear map on $L$ which is  also a homomorphism with respect to multiplication
$[\cdot, \dots, \cdot]$).\\
2) An $n$-Hom-Lie color algebra $(L, [\cdot, \dots, \cdot], \varepsilon, \alpha)$ is said to be regular \index{Algebra!regular n-Hom-Lie color} if $\alpha$ is an automorphism.\\
3) An $n$-Hom-Lie color algebra $(L, [\cdot, \dots, \cdot], \varepsilon, \alpha)$ is said to be involutive \index{Algebra!involutive n-Hom-Lie color} if $\alpha^2=id$.
\end{definition}

\begin{example}
 Let $G=\mathbb{Z}_2,\quad \varepsilon(i, j)=(-1)^{ij}$,
$L=L_0\oplus L_1=<e_2, e_4>\oplus <e_1, e_3>$,
$$[e_1, e_2, e_3]=e_2,\quad [e_1, e_2, e_4]=e_1,\quad [e_1, e_3, e_4]=[e_2, e_3, e_4]=0,$$
and
$\alpha(e_1)=e_3, \quad\alpha(e_2)=e_4, \quad \alpha(e_3)=\alpha(e_4)=0.$
Then $(L, [\cdot, \cdot, \cdot], \varepsilon, \alpha)$ is a $3$-Hom-Lie color algebra.
\end{example}

 \begin{example}
  Let $L$ be a graded linear space $$L=L_{(0, 0)}\oplus L_{(0, 1)}\oplus L_{(1, 0)}\oplus L_{(1, 1)}$$ with
$L_{(0, 0)}=<e_1, e_2>,\quad  L_{(0, 1)}=<e_3>,\quad  L_{(1, 0)}=<e_4>,\quad L_{(1, 1)}=<e_5>.$
The $4$-ary even linear multiplication $[\cdot, \cdot, \cdot, \cdot] : L\times L\times L\times L\rightarrow L$ defined for basis $\{e_i \mid i=1, \dots ,5\}$
by
\begin{gather*}
[e_2, e_3, e_4, e_5]=e_1, [e_1, e_3, e_4, e_5]=e_2,  [e_1, e_2, e_4, e_5]=e_3, \\
[e_1, e_2, e_3, e_4]=0, [e_1, e_2, e_3, e_5]=0
\end{gather*}
makes $L$ into the five dimensional $4$-Lie color algebra.

Now define on $(L, [\cdot, \cdot, \cdot, \cdot], \varepsilon)$ an even endomorphism $\alpha :L\rightarrow L$ by
$$\alpha(e_1)=e_2,\quad \alpha(e_2)=e_1, \quad \alpha(e_i)=e_i, \quad i=3, 4, 5.$$
Then $L_\alpha=(L, [\cdot, \cdot, \cdot, \cdot]_\alpha, \varepsilon, \alpha)$ is a $4$-Hom-Lie color algebra.
 Observe that $\alpha$ is involutive (hence bijective).
 \end{example}

\begin{definition}
A graded subspace $H$ of an $n$-Hom-Lie color algebra $L$ is a color Hom-subalgebra \index{Hom-subalgebra!color} of $L$ if
\begin{enumerate}
 \item [i)] $\alpha(H)\subseteq H$,
\item [ii)] $[H, H, \dots, H]\subseteq H$.
\end{enumerate}
\end{definition}

\begin{definition}
Let $L_1, L_2, \dots, L_n$ be Hom-subalgebras of an $n$-Hom-Lie color algebra $L$. Denote by $[L_1, L_2, \dots, L_n]$ the Hom-subalgebra of $L$
generated by all elements $[x_1, x_2, \dots, x_n]$, where $x_i\in L_i, i=1, 2, \dots, n$.
\begin{enumerate}
 \item [i)] The sequence $L_1, L_2, \dots, L_n, \dots$ defined by
\begin{gather*}
L_0=L,\quad L_1=[L_0, L_0, \dots, L_0],\quad L_2=[L_1, L_1, \dots, L_1], \dots,\\
L_n=[L_{n-1}, L_{n-1}, \dots, L_{n-1}], \dots
\end{gather*}
is called the derived sequence. \index{Derived sequence}
\item [ii)] The sequence $L^1, L^2, \dots, L^n, \dots$ defined by
\begin{gather*} L^0=L,\quad L^1=[L^0, L, \dots, L],\quad L^2=[L^1, L, \dots, L], \dots, \\
L^n=[L^{n-1}, L, \dots, L], \dots \end{gather*}
is called the descending central sequence. \index{Descending central sequence}
\item [iii)] The graded subspace $Z(L)$ defined by
\begin{eqnarray}
 Z(L)=\{x\in L| [x, L, L, \dots, L]=0\} \label{BakayokoSilvestrov:z}
\end{eqnarray}
is called the center \index{Center!n-Hom-Lie color algebras} of $L$.
\end{enumerate}
\end{definition}
\begin{definition}
\index{Hom-ideal!n-Hom-Lie color algebra}
A Hom-ideal $I$ of an $n$-Hom-Lie color algebra $L$ is a graded subspace of $L$ such that $\alpha(I)\subseteq I$ and $[I, L, \dots, L]\subseteq I$.
\end{definition}
\begin{theorem}
 Let $(L, [\cdot, \dots, \cdot], \varepsilon, \alpha)$ be an $n$-Hom-Lie color algebra with surjective twisting map $\alpha : L\rightarrow L$. Then,
$I_n, I^n$ and $Z(L)$ are Hom-ideals of $L$.
\end{theorem}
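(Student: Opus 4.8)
The plan is to check, for each of the three graded subspaces in the statement --- the terms $L_n$ of the derived sequence (denoted $I_n$), the terms $L^n$ of the descending central sequence (denoted $I^n$), and the center $Z(L)$ --- the two conditions defining a Hom-ideal: $\alpha$-invariance, $\alpha(\,\cdot\,)\subseteq\,\cdot\,$, and the absorption property $[\,\cdot\,, L, \dots, L]\subseteq\,\cdot\,$. Throughout I use that $\alpha$ is a surjective endomorphism, so $\alpha([x_1, \dots, x_n]) = [\alpha(x_1), \dots, \alpha(x_n)]$ for homogeneous $x_i$, and that each of these subspaces is graded --- $L_n$ and $L^n$ because they are generated by brackets of homogeneous elements, and $Z(L)$ by multilinearity of the bracket together with the grading. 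I would treat the three cases in increasing order of difficulty.

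For the center the absorption condition is immediate: if $x\in Z(L)$ then $[x, L, \dots, L]=0\subseteq Z(L)$ by definition of $Z(L)$. The $\alpha$-invariance is where surjectivity first becomes essential. Given $x\in Z(L)$ and arbitrary homogeneous $z_2, \dots, z_n\in L$, surjectivity lets me write $z_i=\alpha(y_i)$, and then multiplicativity gives $[\alpha(x), z_2, \dots, z_n] = [\alpha(x), \alpha(y_2), \dots, \alpha(y_n)] = \alpha([x, y_2, \dots, y_n]) = \alpha(0) = 0$, so $\alpha(x)\in Z(L)$.

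For the descending central sequence I would first record that it is genuinely descending, i.e. $L^{n+1}\subseteq L^n$, by an easy induction: $L^1=[L, \dots, L]\subseteq L=L^0$, and if $L^n\subseteq L^{n-1}$ then $L^{n+1}=[L^n, L, \dots, L]\subseteq[L^{n-1}, L, \dots, L]=L^n$. The absorption condition is then built into the definition, since $[L^n, L, \dots, L]=L^{n+1}\subseteq L^n$; and $\alpha$-invariance follows by induction, applying $\alpha$ to a generator $[x, z_2, \dots, z_n]$ with $x\in L^{n-1}$ and using multiplicativity together with $\alpha(L^{n-1})\subseteq L^{n-1}$.

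The derived sequence requires the real work, and its absorption property is where I expect the main obstacle to lie; I would prove that each $L_n$ is a Hom-ideal by induction on $n$, the base case $L_0=L$ being trivial. Assuming $L_{n-1}$ is a Hom-ideal, the $\alpha$-invariance of $L_n=[L_{n-1}, \dots, L_{n-1}]$ is routine from multiplicativity and $\alpha(L_{n-1})\subseteq L_{n-1}$. For absorption, take a generator $w=[u_1, \dots, u_n]$ with all $u_j\in L_{n-1}$ and arbitrary homogeneous $z_2, \dots, z_n\in L$; I must show $[w, z_2, \dots, z_n]\in L_n$. The idea is to move the ``long'' bracket into the last slot by $\varepsilon$-skew-symmetry, use surjectivity to write each $z_i=\alpha(z_i')$, and then apply the fundamental ($\varepsilon$-Hom-Jacobi) identity to $[\alpha(z_2'), \dots, \alpha(z_n'), [u_1, \dots, u_n]]$. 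This expands it into a sum of terms of the shape $[\alpha(u_1), \dots, [z_2', \dots, z_n', u_i], \dots, \alpha(u_n)]$, up to bicharacter factors. In each such term $\alpha(u_j)\in L_{n-1}$, while the inserted bracket $[z_2', \dots, z_n', u_i]$ lies in $L_{n-1}$ because $L_{n-1}$ is a Hom-ideal by the induction hypothesis (after reordering it by skew-symmetry into $[u_i, z_2', \dots, z_n']\in[L_{n-1}, L, \dots, L]$); hence every summand lies in $[L_{n-1}, \dots, L_{n-1}]=L_n$, and therefore so does $[w, z_2, \dots, z_n]$. The delicate points are the bookkeeping of the $\varepsilon$-factors produced by the reorderings and, above all, the combined use of surjectivity --- to manufacture the $\alpha$-images needed before the fundamental identity can be invoked --- with the inductive ideal property of $L_{n-1}$.
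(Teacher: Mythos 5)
Your argument for the derived sequence $I_n$ is essentially the paper's own proof: induction on $n$, surjectivity of $\alpha$ to rewrite the outer arguments as $\alpha$-images, the $\varepsilon$-Hom-Jacobi identity, and the inductive ideal property of $I_{n-1}$ to place each summand of the expansion in $I_n$ (note that both you and the paper tacitly use multiplicativity of $\alpha$, which the statement itself does not explicitly assume). The paper only writes out the $I_n$ case, explicitly declining to prove the other two, so your correct treatments of $I^n$ and $Z(L)$ simply fill in details the paper omits.
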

\begin{proof}
We only prove, by induction, that $I_n$ is a Hom-ideal. For this, suppose, first, that $I_{n-1}$ is a Hom-subalgebra of $L$ and show that $I_n$
is a Hom-subalgebra of $L$.
For any $y\in \mathcal{H}(I_n)$, there exist $y_1, y_2, \dots, y_n\in \mathcal{H}(I_{n-1})$, such that
$$y=[y_1, y_2, \dots, y_n].$$ So, $\alpha(y)=\alpha([y_1, y_2, \dots, y_n])=[\alpha(y_1), \alpha(y_2), \dots, \alpha(y_n)]$, which
belong to $I_n$ because $I_{n-1}$ is a Hom-subalgebra. That is $\alpha(I_n)\subseteq I_n$.

For any $y_i\in \mathcal{H}(I_{n})$, there exist $y_i^1, y_i^2, \dots, y_i^n\in I_{n-1}, i=1, 2, \dots, n$ such that
$$[y_1, y_2, \dots, y_n]=[[y_1^1, y_1^2, \dots, y_1^n], [y_2^1, \dots, y_2^n], \dots, [y_n^1, \dots, y_n^n]].$$
$I_{n-1}$ being a Hom-subalgebra, by hypotheses, $[y_i^1, y_i^2, \dots, y_i^n]\in I_{n-1}$ for $1\leq i\leq n$, and so
$[y_1, y_2, \dots, y_n]\in I_n$. Thus $I_n$ is a Hom-subalgebra.

Now, suppose that $I_{n-1}$ is a Hom-ideal. Let $x'_1, \dots, x'_{n-1}\in L, y\in I_n$, then there exist
$x_1, \dots, x_{n-1}\in L$, $y_1, \dots, y_n\in I_{n-1}$ such that
\begin{eqnarray}
&& [x'_1, \dots, x'_{n-1}, y]
=[\alpha(x_1), \dots, \alpha(x_{n-1}), [y_1, \dots, y_n]]\nonumber\\
&&=\sum_{i=1}^n\varepsilon(X, Y_i)[\alpha(y_1), \dots, \alpha(y_{i-1}), [x_1, \dots, x_{n-1}, y_i], \alpha(y_{i+1}), \alpha(y_n)]\nonumber
\end{eqnarray}
As $[x_1, \dots, x_{n-1}, y_i]\in I_{n-1}$, then $[x'_1, \dots, x'_{n-1}, y]\in I_n$. So, $I_n$ is a Hom-ideal of $L$.
\end{proof}

\section{Constructions of $n$-Hom-Lie color algebras}
\label{BakayokoSilvestrov:sec:constrnhomcoloralg}
In this section we present some useful methods for construction of $n$-Hom-Lie color algebras.

\begin{proposition}
Let $(L, [\cdot, \dots, \cdot], \varepsilon, \alpha)$ be an $n$-Hom-Lie color algebra and $\xi\in L_e$ such that $\alpha(\xi)=\xi$.
Then $(L, \{\cdot, \dots, \cdot\}, \varepsilon, \alpha)$ is an $(n-1)$-Hom-Lie color algebra with
$$\{x_1, \dots, x_{n-1}\}=[\xi, x_1, \dots, \dots, x_{n-1}].$$
\end{proposition}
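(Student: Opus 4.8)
The plan is to verify the two defining axioms of an $(n-1)$-Hom-Lie color algebra for the bracket $\{x_1,\dots,x_{n-1}\}=[\xi,x_1,\dots,x_{n-1}]$, exploiting throughout that $\xi$ is homogeneous of the neutral degree $e$. The first thing I would record is that $\varepsilon(\xi,x)=\varepsilon(x,\xi)=1$ for every homogeneous $x$: axiom (iii) applied with the two left arguments equal to $e$ gives $\varepsilon(e,x)=\varepsilon(e+e,x)=\varepsilon(e,x)^2$, so $\varepsilon(e,x)=1$, and then axiom (i) yields $\varepsilon(x,e)=1$. Consequently the new bracket is $(n-1)$-linear and homogeneous of degree $\deg x_1+\dots+\deg x_{n-1}$ (the extra slot occupied by $\xi$ contributes $e$), so it is a graded $(n-1)$-ary operation, and $\alpha$ is retained as the same even linear map. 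Color skew-symmetry is then inherited at no cost: interchanging the adjacent arguments $x_i,x_{i+1}$ of $\{\cdot,\dots,\cdot\}$ is literally interchanging two adjacent slots of $[\xi,x_1,\dots,x_{n-1}]$, the slot of $\xi$ never moving, so the sign $-\varepsilon(x_i,x_{i+1})$ comes straight from the skew-symmetry of the original bracket.

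The heart of the argument is the fundamental identity. I would rewrite the left-hand side, using $\alpha(\xi)=\xi$, as
\[
\{\alpha(x_1),\dots,\alpha(x_{n-2}),\{y_1,\dots,y_{n-1}\}\}
=[\alpha(\xi),\alpha(x_1),\dots,\alpha(x_{n-2}),[\xi,y_1,\dots,y_{n-1}]],
\]
and then apply the original $n$-ary fundamental identity with the $(n-1)$-tuple $(\xi,x_1,\dots,x_{n-2})$ in the acting slots and the inner $n$-tuple $(\xi,y_1,\dots,y_{n-1})$. This produces a sum over $i=1,\dots,n$.

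In the term $i=1$ the inner bracket becomes $[\xi,x_1,\dots,x_{n-2},\xi]$, which I claim vanishes: moving the trailing $\xi$ leftwards past each $x_j$ costs only the trivial sign $-\varepsilon(x_j,\xi)=-1$, and once the two copies of $\xi$ are adjacent, skew-symmetry together with $\varepsilon(\xi,\xi)=1$ gives $[\xi,\xi,\dots]=-[\xi,\xi,\dots]$, hence $0$ since $\operatorname{char}\mathbb{K}\neq 2$. For each remaining term $i=2,\dots,n$, using $\alpha(\xi)=\xi$ and $[\xi,x_1,\dots,x_{n-2},y_{i-1}]=\{x_1,\dots,x_{n-2},y_{i-1}\}$, the summand collapses to a multiple of $\{\alpha(y_1),\dots,\{x_1,\dots,x_{n-2},y_{i-1}\},\dots,\alpha(y_{n-1})\}$, the leading $\xi$ re-expressing the outer $n$-bracket as an $(n-1)$-bracket; reindexing by $j=i-1$ then gives precisely the right-hand side of the $(n-1)$-ary identity.

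The only point that requires care is the matching of the bicharacter coefficients. The acting degree $X=\deg\xi+x_1+\dots+x_{n-2}$ equals $x_1+\dots+x_{n-2}$ because $\deg\xi=e$, which is exactly the acting degree of the new bracket; and with the convention $y_0=e$ the inner running sum $Y_i$ of the $n$-ary identity, evaluated on $(\xi,y_1,\dots,y_{n-1})$, reduces (since $\deg\xi=e$) to the new $(n-1)$-ary running sum after the shift $j=i-1$, so the surviving coefficients $\varepsilon(X,Y_i)$ are exactly the required $\varepsilon(X,Y_j)$ for the new algebra. I expect this last index bookkeeping of the bicharacter, together with the vanishing of the doubled-$\xi$ term, to be the only genuinely delicate points; everything else is a direct transcription of the $n$-ary axioms.
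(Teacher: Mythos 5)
Your proof is correct and follows essentially the same route as the paper: apply the $n$-ary Hom-Jacobi identity to $[\alpha(\xi),\alpha(x_1),\dots,\alpha(x_{n-2}),[\xi,y_1,\dots,y_{n-1}]]$, discard the term with the repeated $\xi$, and reindex the remaining sum. You in fact supply details the paper leaves implicit, namely the vanishing of $[\xi,x_1,\dots,x_{n-2},\xi]$ via $\varepsilon(e,\cdot)=1$ and $\operatorname{char}\mathbb{K}\neq 2$, and the matching of the bicharacter coefficients after the index shift.
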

\begin{proof} With conditions in the statement,
  \begin{multline}
\{\alpha(x_1), \dots, \alpha(x_{n-2}), \{y_1, \dots, y_{n-1}\}\} 
=[\xi,\alpha( x_1), \dots, \alpha(x_{n-2}), [\xi, y_1, \dots, y_{n-1}]]\nonumber\\
=[\alpha(\xi),\alpha( x_1), \dots, \alpha(x_{n-2}), [\xi, y_1, \dots, y_{n-1}]]\nonumber\\
=[[\xi, x_1, \dots, x_{n-2}, \xi], \alpha(y_1), \dots, \alpha(y_{n-1})]\nonumber\\
+\sum_{i=i}^{n-1}\varepsilon(X, Y_i)[\xi, \alpha(y_1), \dots, \alpha(y_{i-1}), [\xi, x_1, \dots, x_{n-2}, y_i], \alpha(y_{i+1}), \dots, \alpha(y_{n-1})]\nonumber\\
=\sum_{i=i}^{n-1}\varepsilon(X, Y_i)\{\alpha(y_1), \dots, \alpha(y_{i-1}), \{x_1, \dots, x_{n-2}, y_i\}, \alpha(y_{i+1}), \dots, \alpha(y_{n-1})\}\nonumber.
  \end{multline}
which completes the proof.
\end{proof}

\begin{corollary}
Let $(L, [\cdot, \dots, \cdot], \varepsilon, \alpha)$ be an $n$-Hom-Lie color algebra and $\xi_i\in L_e$ such that $\alpha(\xi_i)=\xi_i, i=1,2, \dots, k$.
Then $L_k=(L, \{\cdot, \dots, \cdot\}_k, \varepsilon, \alpha)$ is an $(n-k)$-Hom-Lie color algebra with
$\{x_1, \dots, x_{n-k}\}_k=[\xi_1, \dots, \xi_k, x_1\dots, \dots, x_{n-k}].$
\end{corollary}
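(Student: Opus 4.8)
The plan is to argue by induction on $k$, using the preceding Proposition both as the base case and as the engine of the inductive step. For $k=1$ the claimed bracket $\{x_1,\dots,x_{n-1}\}_1=[\xi_1,x_1,\dots,x_{n-1}]$ is exactly the one produced by the Proposition applied to $(L,[\cdot,\dots,\cdot],\varepsilon,\alpha)$ with $\xi=\xi_1$, so there is nothing to prove.

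For the inductive step, suppose that $L_{k-1}=(L,\{\cdot,\dots,\cdot\}_{k-1},\varepsilon,\alpha)$ is an $(n-k+1)$-Hom-Lie color algebra, where $\{x_1,\dots,x_{n-k+1}\}_{k-1}=[\xi_1,\dots,\xi_{k-1},x_1,\dots,x_{n-k+1}]$. The key observation is that the $k$-th bracket is obtained from the $(k-1)$-th by inserting $\xi_k$ into the first slot:
\[
\{x_1,\dots,x_{n-k}\}_k=[\xi_1,\dots,\xi_k,x_1,\dots,x_{n-k}]=\{\xi_k,x_1,\dots,x_{n-k}\}_{k-1}.
\]
Thus $L_k$ is precisely the algebra obtained by applying the Proposition to $L_{k-1}$ with the distinguished element $\xi_k$.

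To invoke the Proposition in this form, I must check that $\xi_k$ satisfies its hypotheses relative to $L_{k-1}$. Since the construction leaves the underlying graded space, the bicharacter $\varepsilon$, and the twisting map $\alpha$ unchanged, the neutral component $L_e$ and the notion of $\alpha$-invariance are the same for $L_{k-1}$ as for $L$. Hence $\xi_k\in L_e$ and $\alpha(\xi_k)=\xi_k$, which hold by hypothesis, are exactly what the Proposition requires. Applying it yields that $L_k=(L,\{\cdot,\dots,\cdot\}_k,\varepsilon,\alpha)$ is an $(n-k)$-Hom-Lie color algebra, completing the induction.

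I expect no serious obstacle: the content is entirely in recognizing that iterating the single-element reduction $k$ times coincides with the simultaneous insertion of $\xi_1,\dots,\xi_k$, and that all structural data (grading, bicharacter, twisting map) are preserved at each stage, so the hypotheses of the Proposition keep being met. The only point requiring a moment's care is the bookkeeping of the slot order: inserting $\xi_k$ into the first slot of $\{\cdot,\dots,\cdot\}_{k-1}$ must reproduce the ordered list $\xi_1,\dots,\xi_k$ placed in front of the arguments $x_1,\dots,x_{n-k}$, which follows directly by unfolding the inductive definition of $\{\cdot,\dots,\cdot\}_{k-1}$.
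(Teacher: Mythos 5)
Your induction is correct and is exactly the intended route: the paper states this as an immediate corollary of the preceding proposition, to be obtained by applying that proposition $k$ times, inserting $\xi_1,\dots,\xi_k$ one at a time. Your verification that the hypotheses ($\xi_k\in L_e$, $\alpha(\xi_k)=\xi_k$) persist at each stage because the grading, bicharacter and twisting map are unchanged is the only point needing attention, and you handle it properly.
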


\begin{corollary}
Let $(L, [\cdot, \dots, \cdot], \varepsilon)$ be an $n$-Lie color algebra and $\xi\in L_e$.

Then $(L, \{\cdot, \dots, \cdot\}, \varepsilon)$ is an $(n-1)$-Lie color algebra with
$$\{x_1, \dots, x_{n-1}\}=[\xi, x_1, \dots, \dots, x_{n-1}].$$
\end{corollary}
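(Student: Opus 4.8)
The plan is to recognize this corollary as nothing more than the specialization $\alpha=\mathrm{id}_L$ of the preceding Proposition, combined with part~1) of the Remark following the definition of $n$-Hom-Lie color algebras. First I would set $\alpha=\mathrm{id}_L$ in the Proposition. The identity map is even, so it qualifies as an admissible twisting map, and it satisfies $\mathrm{id}_L(\xi)=\xi$ for \emph{every} $\xi\in L_e$; hence the hypothesis $\alpha(\xi)=\xi$ of the Proposition is automatically fulfilled. This is precisely why the corollary may drop that assumption and require only $\xi\in L_e$.

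Next I would invoke the Proposition directly. With $\alpha=\mathrm{id}_L$ it asserts that $(L,\{\cdot,\dots,\cdot\},\varepsilon,\mathrm{id}_L)$ is an $(n-1)$-Hom-Lie color algebra for the reduced bracket $\{x_1,\dots,x_{n-1}\}=[\xi,x_1,\dots,x_{n-1}]$. By part~1) of the Remark, an $(n-1)$-Hom-Lie color algebra whose twisting map equals $\mathrm{id}$ is exactly an $(n-1)$-Lie color algebra, which yields the claimed conclusion. If one instead preferred a self-contained argument, I would simply transcribe the computation in the proof of the Proposition with all occurrences of $\alpha$ erased: the $\varepsilon$-skew-symmetry of $\{\cdot,\dots,\cdot\}$ is inherited from that of $[\cdot,\dots,\cdot]$ because $\xi$ remains frozen in the first slot and every admissible adjacent transposition permutes only the arguments $x_1,\dots,x_{n-1}$, reproducing the sign $-\varepsilon(x_i,x_{i+1})$; and the $(n-1)$-ary fundamental identity drops straight out of the $n$-ary one.

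The expected main obstacle: there is essentially none of substance, since all the work already resides in the proved Proposition. The only point meriting a line of care is the vanishing of the leading term $[[\xi,x_1,\dots,x_{n-2},\xi],\dots]$ that arises when the fundamental identity is expanded, and the confirmation that inserting $\xi\in L_e$ in the leading position leaves the bicharacter bookkeeping among the remaining slots untouched. Both follow from $\xi$ having the neutral degree $e$: the bicharacter axioms force $\varepsilon(a,e)=\varepsilon(e,a)=1$, so moving the repeated $\xi$ into an adjacent position costs only signs $\pm1$ and produces $[\xi,\xi,\dots]=-\varepsilon(\xi,\xi)[\xi,\xi,\dots]=-[\xi,\xi,\dots]$, which vanishes because $\operatorname{char}\mathbb{K}\neq 2$. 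Thus the reduced structure is genuinely $(n-1)$-ary, skew-symmetric, and satisfies the identity \eqref{BakayokoSilvestrov:F}.
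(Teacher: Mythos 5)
Your proposal is correct and matches the paper's (implicit) argument exactly: the corollary is stated without proof precisely because it is the specialization $\alpha=\mathrm{id}_L$ of the preceding Proposition, with the hypothesis $\alpha(\xi)=\xi$ automatically satisfied and the Remark identifying $n$-Hom-Lie color algebras with identity twisting as $n$-Lie color algebras. Your additional observation that the term $[[\xi,x_1,\dots,x_{n-2},\xi],\dots]$ vanishes because $\xi$ has neutral degree and $\operatorname{char}\mathbb{K}\neq 2$ is a correct and welcome justification of a step the paper's Proposition proof passes over silently.
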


\begin{theorem}
Let $(L, [\cdot, \dots, \cdot], \varepsilon, \alpha)$ be an $n$-Hom-Lie color algebra and $\beta$ be an even endomorphism of $L$. Then
$L_\beta=(L, \{\cdot, \dots, \cdot\}=\beta[\cdot, \dots, \cdot], \varepsilon, \beta\alpha)$
is an $n$-Hom-Lie color algebra.
Moreover suppose that $(L', [\cdot, \dots, \cdot]', \varepsilon, \alpha')$ is another $n$-Hom-Lie color algebra and $\beta'$ be an even endomorphism of $L'$.
If $f : (L, [\cdot, \dots, \cdot], \varepsilon, \alpha)\rightarrow (L', [\cdot, \dots, \cdot]', \varepsilon, \alpha')$
is a morphism such that $f\beta=\beta'f$, then $f : L_\beta\rightarrow L_{\beta'}$ is also a morphism.
\end{theorem}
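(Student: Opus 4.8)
The plan is to verify the two defining axioms of an $n$-Hom-Lie color algebra (the $\varepsilon$-skew-symmetry and the Hom-Nambu--Filippov identity) for the twisted data $(L,\{\cdot,\dots,\cdot\},\varepsilon,\beta\alpha)$, and then to treat the morphism statement as a short formal computation.

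I would first clear away the routine points. The map $\beta\alpha$ is even, being a composite of even maps, so it is an admissible twisting map. For the $\varepsilon$-skew-symmetry, I apply the even linear map $\beta$ to both sides of the skew-symmetry relation satisfied by $[\cdot,\dots,\cdot]$; since $\beta$ preserves degrees, every bicharacter factor $\varepsilon(x_i,x_{i+1})$ is unchanged, and $\{x_1,\dots,x_i,x_{i+1},\dots,x_n\}=-\varepsilon(x_i,x_{i+1})\{x_1,\dots,x_{i+1},x_i,\dots,x_n\}$ follows at once.

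The heart of the argument is the Hom-Nambu--Filippov identity. The two properties I exploit are that an endomorphism $\beta$ is multiplicative, $\beta[z_1,\dots,z_n]=[\beta z_1,\dots,\beta z_n]$, and that it commutes with $\alpha$, i.e. $\beta\alpha=\alpha\beta$. I would begin from the original identity for $(L,[\cdot,\dots,\cdot],\alpha)$ written with the substituted arguments $\beta x_1,\dots,\beta x_{n-1}$ and $\beta y_1,\dots,\beta y_n$; evenness of $\beta$ leaves all the coefficients $\varepsilon(X,Y_i)$ intact. Multiplicativity collapses the inner brackets via $[\beta y_1,\dots,\beta y_n]=\beta[y_1,\dots,y_n]$ and $[\beta x_1,\dots,\beta x_{n-1},\beta y_i]=\beta[x_1,\dots,x_{n-1},y_i]$, while the commutation relation converts each occurrence of $\alpha\beta$ into $\beta\alpha$, yielding
\begin{multline*}
[\beta\alpha(x_1),\dots,\beta\alpha(x_{n-1}),\beta[y_1,\dots,y_n]]\\
=\sum_{i=1}^n\varepsilon(X,Y_i)[\beta\alpha(y_1),\dots,\beta[x_1,\dots,x_{n-1},y_i],\dots,\beta\alpha(y_n)].
\end{multline*}
Applying $\beta$ once more to both sides and folding the outer $\beta$ together with the inner ones into the twisted bracket $\{\cdot,\dots,\cdot\}=\beta[\cdot,\dots,\cdot]$ turns the left-hand side into $\{\beta\alpha(x_1),\dots,\beta\alpha(x_{n-1}),\{y_1,\dots,y_n\}\}$ and the right-hand side into $\sum_i\varepsilon(X,Y_i)\{\beta\alpha(y_1),\dots,\{x_1,\dots,x_{n-1},y_i\},\dots,\beta\alpha(y_n)\}$, which is exactly the required identity. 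I expect the genuine difficulty to be purely organisational: tracking that the twisting map in every slot is $\beta\alpha$ and not $\alpha\beta$, which is precisely where the commutation $\beta\alpha=\alpha\beta$ is indispensable.

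Finally, for the morphism claim I would verify the two conditions of the morphism definition for $f:L_\beta\to L_{\beta'}$. Compatibility with the new twisting maps is immediate: from $f\alpha=\alpha'f$ and the hypothesis $f\beta=\beta'f$ one gets $f(\beta\alpha)=\beta'f\alpha=(\beta'\alpha')f$. Preservation of the twisted bracket is equally direct, since $f(\{x_1,\dots,x_n\})=f(\beta[x_1,\dots,x_n])=\beta'f([x_1,\dots,x_n])=\beta'[f(x_1),\dots,f(x_n)]'=\{f(x_1),\dots,f(x_n)\}'$, using $f\beta=\beta'f$ and that $f$ is a morphism of the original algebras; $f$ is even by assumption. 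This completes the plan.
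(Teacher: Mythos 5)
Your verification of the skew-symmetry and of the morphism claim is fine (indeed, for the morphism part you also check compatibility of $f$ with the new twisting maps, $f(\beta\alpha)=(\beta'\alpha')f$, which the paper's proof leaves implicit). The problem is in the main computation: as you organise it, the argument genuinely needs the commutation $\beta\alpha=\alpha\beta$, and you list this as one of your two working hypotheses. But the theorem does not grant it. In this paper an ``endomorphism'' of an $n$-Hom-Lie color algebra means only an even linear map that is a homomorphism with respect to the bracket (see the definition of ``multiplicative'', where $\alpha$ being an endomorphism is glossed exactly as being a homomorphism for $[\cdot,\dots,\cdot]$); no commutation with $\alpha$ is assumed, and the paper's own proof never uses it. Your route forces the issue because you first substitute $\beta x_1,\dots,\beta x_{n-1},\beta y_1,\dots,\beta y_n$ into the Hom--Nambu--Filippov identity, which puts $\alpha\beta(x_j)$ and $\alpha\beta(y_j)$ in the outer slots, and you then have to trade these for $\beta\alpha(x_j)$ and $\beta\alpha(y_j)$. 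For a $\beta$ that does not commute with $\alpha$, that step fails and your derivation does not establish the identity for the twisting map $\beta\alpha$.

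The repair is to reverse the order of operations, which is what the paper does: start from the left-hand side of the target identity, unfold the definition to get
$\beta\bigl([\beta\alpha(x_1),\dots,\beta\alpha(x_{n-1}),\beta[y_1,\dots,y_n]]\bigr)$,
and use multiplicativity of $\beta$ applied to the arguments $\alpha(x_1),\dots,\alpha(x_{n-1}),[y_1,\dots,y_n]$ to rewrite this as $\beta^2\bigl([\alpha(x_1),\dots,\alpha(x_{n-1}),[y_1,\dots,y_n]]\bigr)$. Now apply the original Hom--Nambu--Filippov identity inside $\beta^2$ and push the two factors of $\beta$ back through each summand by multiplicativity; every slot automatically acquires $\beta\alpha$ (never $\alpha\beta$), and the result folds into the twisted brackets. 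This yields the identity for arbitrary even multiplicative $\beta$, with no commutation hypothesis. With that reorganisation your proof becomes correct and matches the paper's.
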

\begin{proof}
 First part is proved as follows:
\begin{eqnarray}
 &&\{\beta\alpha(x_1), \dots, \beta\alpha (x_{n-1}), \{y_1, \dots, y_n\}\}=\beta([\beta\alpha (x_1), \dots, \beta\alpha (x_{n-1}), \beta[y_1, \dots, y_n]])\nonumber\\
&&=\beta^2\Big([\alpha(x_1), \dots, \alpha(x_{n-1}), [y_1, y_2, \dots, y_{n}]]\Big)\nonumber\\
&&=\beta^2\Big(\sum_{i=1}^n\varepsilon(X, Y_i)[\alpha(y_1), \dots, \alpha(y_{i-1}),
[x_1, \dots, x_{n-1}, y_i], \alpha(y_{i+1}),\dots, \alpha(y_{n})]\Big)\nonumber\\
&&=\sum_{i=1}^n\varepsilon(X, Y_i)\beta^2\Big([\alpha(y_1), \dots, \alpha(y_{i-1}),
[x_1, \dots, x_{n-1}, y_i], \alpha(y_{i+1}),\dots, \alpha(y_{n})]\Big)\nonumber\\
&&=\sum_{i=1}^n\varepsilon(X, Y_i)\beta[\beta\alpha(y_1), \dots, \beta\alpha(y_{i-1}),
\beta[x_1, \dots, x_{n-1}, y_i], \beta\alpha(y_{i+1}),\dots, \beta\alpha(y_{n})]\nonumber\\
&&=\sum_{i=1}^n\varepsilon(X, Y_i)\{\beta\alpha(y_1), \dots, \beta\alpha(y_{i-1}),
\{x_1, \dots, x_{n-1}, y_i\}, \beta\alpha(y_{i+1}),\dots, \beta\alpha(y_{n})\}\nonumber.
\end{eqnarray}
Second part is proved as follows:
\begin{eqnarray}
 f(\{x_1, \dots, x_n\})&=&f([x_1, \dots, x_n]_\beta)
=f\beta[x_1, \dots, x_n]=f[\beta(x_1), \dots, \beta(x_n)]\nonumber\\
&=&[f\beta(x_1), \dots, f\beta(x_n)]'=[\beta'f(x_1), \dots, \beta'f(x_n)]'\nonumber\\
&=&\beta'[f(x_1), \dots, f(x_n)]'=[f(x_1), \dots, f(x_n)]'_{\beta'}\nonumber\\
&=&\{ f(x_1), \dots, f(x_n)\}'\nonumber
\end{eqnarray}
This completes the proof.
\end{proof}
Taking $\beta=\alpha^n$ leads to the following statement.
\begin{corollary}
Let $(L, [\cdot, \dots, \cdot], \varepsilon, \alpha)$ be a multiplicative $n$-Hom-Lie color algebra. Then, for any positive integer $n$,
$(L, \alpha^n[\cdot, \dots, \cdot], \varepsilon, \alpha^{n+1})$ is also an $n$-Hom-Lie color algebra.
\end{corollary}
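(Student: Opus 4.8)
The plan is to obtain this corollary as an immediate special case of the preceding theorem, specializing the even endomorphism $\beta$ to the iterate $\beta=\alpha^n$. First I would check that $\alpha^n$ qualifies as an even endomorphism of $L$, since that is exactly the hypothesis the theorem demands of $\beta$. Evenness is clear: as $\alpha$ is even we have $\alpha(L_a)\subseteq L_a$ for every $a\in G$, and iterating gives $\alpha^n(L_a)\subseteq L_a$. That $\alpha^n$ respects the bracket is where multiplicativity enters: by definition of a multiplicative $n$-Hom-Lie color algebra, $\alpha$ is a homomorphism, so $\alpha([x_1,\dots,x_n])=[\alpha(x_1),\dots,\alpha(x_n)]$, and a short induction on the exponent yields $\alpha^n([x_1,\dots,x_n])=[\alpha^n(x_1),\dots,\alpha^n(x_n)]$. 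Thus $\beta=\alpha^n$ satisfies all the assumptions of the theorem. Note that it is precisely this step that forces the multiplicativity hypothesis, since the theorem itself needs no such assumption on $L$ but only on $\beta$.

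Next I would simply read off the conclusion of the theorem for this choice. The theorem produces the twisted structure $L_\beta=(L,\{\cdot,\dots,\cdot\}=\beta[\cdot,\dots,\cdot],\varepsilon,\beta\alpha)$ and asserts that it is again an $n$-Hom-Lie color algebra. Substituting $\beta=\alpha^n$ turns the twisted bracket into $\alpha^n[\cdot,\dots,\cdot]$ and the twisting map into $\beta\alpha=\alpha^n\circ\alpha=\alpha^{n+1}$, which is exactly the structure $(L,\alpha^n[\cdot,\dots,\cdot],\varepsilon,\alpha^{n+1})$ in the statement. Hence the corollary is a direct instance of the theorem, and no re-derivation of the twisted fundamental identity is required, since that identity was already established in full generality in the theorem's proof.

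I do not anticipate a real obstacle; the argument collapses to a single substitution together with the verification that $\alpha^n$ lies in the class of even endomorphisms covered by the theorem. The only point needing care is bookkeeping of notation: the symbol $n$ denotes both the arity of the algebra and the exponent ``any positive integer $n$'', so I would make explicit that the exponent ranges over positive integers independently of the arity, and confirm that the induction establishing $\alpha^n\in\mathrm{End}(L)$ nowhere uses the value of the arity. With that clarification in place, the proof is complete.
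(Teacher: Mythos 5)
Your proposal is correct and matches the paper exactly: the paper derives this corollary from the preceding theorem simply by "Taking $\beta=\alpha^n$", and your additional verification that multiplicativity makes $\alpha^n$ an even endomorphism is precisely the implicit justification for that substitution.
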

Taking $\beta=\alpha$ and $\alpha=id$ leads to the following statement.
\begin{corollary}
 Let $(L, [\cdot, \dots, \cdot], \varepsilon)$ be an $n$-Lie color algebra and $\alpha$ be an even endomorphism of $L$. Then
$L_\alpha=(L, \{\cdot, \dots, \cdot\}=\alpha[\cdot, \dots, \cdot], \varepsilon, \alpha)$ is a multiplicative $n$-Hom-Lie color algebra.
\end{corollary}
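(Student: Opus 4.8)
The plan is to obtain this statement as a direct specialization of the preceding theorem and then supply the one piece of information the theorem does not give us, namely multiplicativity. First I would recall the observation recorded in the remark after the definition of an $n$-Hom-Lie color algebra: an $n$-Lie color algebra $(L,[\cdot,\dots,\cdot],\varepsilon)$ is precisely an $n$-Hom-Lie color algebra $(L,[\cdot,\dots,\cdot],\varepsilon,\mathrm{id})$ whose twisting map is the identity. Viewing $L$ in this way, I would apply the theorem with its roles instantiated as $\alpha\mapsto\mathrm{id}$ and $\beta\mapsto\alpha$, where $\alpha$ is the given even endomorphism.

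Under this substitution the theorem produces the twisted object $L_\beta=(L,\,\beta[\cdot,\dots,\cdot],\,\varepsilon,\,\beta\alpha)$, which here reads $(L,\,\alpha[\cdot,\dots,\cdot],\,\varepsilon,\,\alpha\cdot\mathrm{id})=(L,\,\{\cdot,\dots,\cdot\},\,\varepsilon,\,\alpha)$ with $\{\cdot,\dots,\cdot\}=\alpha[\cdot,\dots,\cdot]$. The theorem guarantees that this is an $n$-Hom-Lie color algebra; in particular the skew-symmetry in each pair of arguments and the twisted fundamental identity are inherited at no cost, since $\alpha$ is even and hence respects the bicharacter-graded antisymmetry. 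No additional work is needed for the $n$-Hom-Lie color axioms themselves.

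The only claim not already covered by the theorem is that the resulting structure is \emph{multiplicative}, i.e. that $\alpha$ is an endomorphism for the new bracket $\{\cdot,\dots,\cdot\}$. This I would verify by a short computation using that $\alpha$ is an endomorphism for the original bracket: on one side $\alpha(\{x_1,\dots,x_n\})=\alpha(\alpha[x_1,\dots,x_n])=\alpha^2[x_1,\dots,x_n]$, while on the other $\{\alpha(x_1),\dots,\alpha(x_n)\}=\alpha[\alpha(x_1),\dots,\alpha(x_n)]=\alpha\big(\alpha[x_1,\dots,x_n]\big)=\alpha^2[x_1,\dots,x_n]$, so the two expressions agree. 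Since the whole argument reduces to an already-proved theorem together with this one identity, there is no genuine obstacle here; the only point demanding attention is that multiplicativity is an assertion \emph{beyond} what the theorem delivers, and so must be recorded and checked by hand as above rather than quoted from the theorem.
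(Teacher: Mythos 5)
Your proposal is correct and follows essentially the same route as the paper, which obtains this corollary purely by specializing the preceding theorem via ``$\beta=\alpha$ and $\alpha=\mathrm{id}$.'' Your explicit check that $\alpha(\{x_1,\dots,x_n\})=\alpha^2[x_1,\dots,x_n]=\{\alpha(x_1),\dots,\alpha(x_n)\}$ supplies the multiplicativity claim that the paper leaves implicit, and is a worthwhile addition.
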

Taking $\beta\in Aut(L), \beta=\alpha^{-1}$ leads to the following statement.
\begin{corollary}
 Let $(L, [\cdot, \dots, \cdot], \varepsilon, \alpha)$ be a regular $n$-Hom-Lie color algebra. Then
$$L_{\alpha^{-1}}=(L, \{\cdot, \dots, \cdot\}=\alpha^{-1}[\cdot, \dots, \cdot], \varepsilon)$$
is an $n$-Lie color algebra.
\end{corollary}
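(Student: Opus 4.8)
The plan is to obtain this statement as an immediate specialization of the Theorem above, taking $\beta=\alpha^{-1}$, rather than re-deriving the twisted color Jacobi identity from scratch. The preceding remark already signals this: once we feed $\beta=\alpha^{-1}$ into the construction $L_\beta=(L,\beta[\cdot,\dots,\cdot],\varepsilon,\beta\alpha)$, the new twisting map collapses to the identity, and the Theorem does all the work.

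First I would record that, since $(L,[\cdot,\dots,\cdot],\varepsilon,\alpha)$ is \emph{regular}, the twisting map $\alpha$ is an automorphism of $L$; in particular it is a bijective even morphism, so its inverse $\alpha^{-1}$ exists and is again an even morphism of $L$. The evenness of $\alpha^{-1}$ deserves one line of justification: because $\alpha$ is even it maps each homogeneous component into itself, and if $\alpha^{-1}(x)=\sum_b z_b$ for a homogeneous $x\in L_a$, then applying $\alpha$ and comparing graded components (using injectivity of $\alpha$) forces $z_b=0$ for $b\neq a$, so $\alpha^{-1}(x)\in L_a$. Thus $\beta:=\alpha^{-1}$ is a legitimate even endomorphism of $L$, and the Theorem applies to it.

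Next I would apply the Theorem directly with this $\beta$, which yields that
$$L_{\alpha^{-1}}=\bigl(L,\ \{\cdot,\dots,\cdot\}=\alpha^{-1}[\cdot,\dots,\cdot],\ \varepsilon,\ \beta\alpha\bigr)$$
is an $n$-Hom-Lie color algebra. The only computation required is the composite twisting map $\beta\alpha=\alpha^{-1}\circ\alpha=\mathrm{id}_L$, so the resulting structure carries the identity as its twist. Finally I would invoke part 1) of the Remark following the definition of $n$-Hom-Lie color algebras, which asserts that an $n$-Hom-Lie color algebra whose twisting map is $\mathrm{id}$ is precisely an $n$-Lie color algebra; hence $L_{\alpha^{-1}}$ is an $n$-Lie color algebra, as claimed. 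I do not expect a genuine obstacle here, since the whole content is already packaged in the Theorem; the single point that merits explicit mention is that $\alpha^{-1}$ is an even endomorphism, which is exactly what regularity supplies.
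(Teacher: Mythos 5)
Your proposal is correct and follows exactly the route the paper intends: the corollary is obtained by specializing the preceding theorem to $\beta=\alpha^{-1}$, observing that the new twist $\beta\alpha=\mathrm{id}$, and invoking the remark that an $n$-Hom-Lie color algebra with identity twist is an $n$-Lie color algebra. Your extra check that $\alpha^{-1}$ is an even endomorphism is a sensible detail the paper leaves implicit.
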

Taking $\beta=\alpha$ leads to the following statement.
\begin{corollary}
  Let $(L, [\cdot, \dots, \cdot], \varepsilon, \alpha)$ be an involutive $n$-Hom-Lie color algebra. Then
$$L_\beta=(L, \{\cdot, \dots, \cdot\}=\alpha[\cdot, \dots, \cdot], \varepsilon)$$
is an $n$-Lie color algebra.
\end{corollary}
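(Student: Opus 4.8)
The plan is to deduce this corollary directly from the preceding Theorem by specializing the even endomorphism $\beta$ to be the twisting map $\alpha$ itself, and then to collapse the resulting twisting map using involutivity. Since the paper works throughout with multiplicative $n$-Hom-Lie color algebras, the twisting map $\alpha$ is an even endomorphism of $L$, hence a legitimate choice for the map $\beta$ appearing in the Theorem.

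First I would apply the Theorem verbatim with $\beta=\alpha$. The Theorem then guarantees that $L_\alpha=(L,\{\cdot,\dots,\cdot\}=\alpha[\cdot,\dots,\cdot],\varepsilon,\alpha\alpha)$ is again an $n$-Hom-Lie color algebra. In particular, the new bracket $\alpha[\cdot,\dots,\cdot]$ automatically inherits the graded skew-symmetry in each adjacent pair of variables and satisfies the $\varepsilon$-twisted fundamental identity, now with twisting map $\alpha\circ\alpha=\alpha^2$; no separate verification of these identities is needed because the Theorem already supplies them.

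Next I would invoke the involutive hypothesis $\alpha^2=\mathrm{id}$. Substituting this into the twisting map of $L_\alpha$ reduces it to the identity, so that $L_\alpha=(L,\alpha[\cdot,\dots,\cdot],\varepsilon,\mathrm{id})$. Finally, by the Remark recording that an $n$-Hom-Lie color algebra whose twisting map equals the identity is precisely an $n$-Lie color algebra, the structure $L_\alpha$ is an $n$-Lie color algebra, which is the assertion.

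I do not anticipate a genuine obstacle, since the entire content is carried by the Theorem; the only substantive step is observing that involutivity forces the composite twisting map $\alpha^2$ to be trivial. The one point worth flagging is the implicit multiplicativity: this is what makes $\alpha$ an admissible choice of endomorphism $\beta$. Were multiplicativity not assumed, one would first have to confirm that $\alpha$ commutes with the bracket before the Theorem could be applied; under the standing conventions of the paper this is automatic.
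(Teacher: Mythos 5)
Your proposal is exactly the paper's argument: the corollary is stated immediately after the theorem with the remark ``Taking $\beta=\alpha$ leads to the following statement,'' i.e.\ one specializes $\beta=\alpha$ to get the twisting map $\alpha^2$, which involutivity collapses to the identity, and an $n$-Hom-Lie color algebra with identity twisting is an $n$-Lie color algebra. Your additional caveat about needing $\alpha$ to be an endomorphism (multiplicativity) so that it is an admissible choice of $\beta$ is a fair observation that the paper leaves implicit.
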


\begin{theorem}\label{BakayokoSilvestrov:tp}
Let $(A, \cdot)$ be a commutative associative algebra and $(L, [\cdot, \dots, \cdot], \varepsilon, \alpha)$ be an $n$-Hom-Lie color algebra. The tensor
product $A\otimes L=\sum_{g\in G}(A\otimes L)_g=\sum_{g\in G}A\otimes L_g$ with the bracket
$[a_1\otimes x_1, \dots, a_n\otimes x_n]'=a_1\dots a_n\otimes [x_1, \dots, x_n],$
the even linear map
$\alpha'(a\otimes x):=a\otimes \alpha(x)$
and the bicharacter
$\varepsilon(a+x, b+y)=\varepsilon(x, y), \forall a, b\in A, \forall x, y\in \mathcal{H}(L),$
is an $n$-Hom-Lie color algebra.
\end{theorem}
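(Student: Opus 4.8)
The plan is to verify directly the three defining requirements of an $n$-Hom-Lie color algebra for $(A\otimes L, [\cdot,\dots,\cdot]', \varepsilon, \alpha')$: that the data are well defined and graded, that the bracket is $\varepsilon$-skew-symmetric in each adjacent pair of arguments, and that the $n$-Hom-Lie color Jacobi identity holds. The whole argument reduces every computation to one already available in $L$, the only new ingredients being the commutativity and associativity of $A$.

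First I would record the setup. Since $A$ carries no grading, it is placed in degree $e$, so $(A\otimes L)_g = A\otimes L_g$ and a homogeneous element $a\otimes x$ with $x\in\mathcal{H}(L)$ has degree equal to $\deg x$. Consequently the prescription $\varepsilon(a+x, b+y)=\varepsilon(x,y)$ is well defined on $A\otimes L$ and inherits the bicharacter axioms (i)--(iii) directly from $\varepsilon$ on $L$. The bracket $[a_1\otimes x_1,\dots,a_n\otimes x_n]' = a_1\cdots a_n\otimes[x_1,\dots,x_n]$ is $n$-linear because multiplication in $A$ and the bracket of $L$ are multilinear, and $\alpha'(a\otimes x)=a\otimes\alpha(x)$ is even because $\alpha$ is even and the $A$-factor sits in degree $e$.

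Next I would check the color skew-symmetry. Swapping the $i$-th and $(i+1)$-th arguments, the $A$-part changes from $a_1\cdots a_i a_{i+1}\cdots a_n$ to $a_1\cdots a_{i+1}a_i\cdots a_n$, and these are equal because $A$ is commutative; the $L$-part transforms by the factor $-\varepsilon(x_i,x_{i+1})$ by the skew-symmetry of $L$. Since the degree of $a_j\otimes x_j$ is $\deg x_j$, this sign is exactly $-\varepsilon(a_i\otimes x_i, a_{i+1}\otimes x_{i+1})$, which is the required identity.

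Finally, the Hom-Jacobi identity is the substantive step. Expanding the left-hand side, the expression $[\alpha'(a_1\otimes x_1),\dots,\alpha'(a_{n-1}\otimes x_{n-1}),[b_1\otimes y_1,\dots,b_n\otimes y_n]']'$ collapses, by associativity of $A$, to $a_1\cdots a_{n-1}b_1\cdots b_n\otimes[\alpha(x_1),\dots,\alpha(x_{n-1}),[y_1,\dots,y_n]]$; I then apply the Hom-Jacobi identity of $L$ to the $L$-factor. For the right-hand side, the $i$-th summand has $A$-part $b_1\cdots b_{i-1}(a_1\cdots a_{n-1}b_i)b_{i+1}\cdots b_n$, which by commutativity of $A$ equals the same product $a_1\cdots a_{n-1}b_1\cdots b_n$, while its $L$-factor is $[\alpha(y_1),\dots,[x_1,\dots,x_{n-1},y_i],\dots,\alpha(y_n)]$ and its coefficient $\varepsilon(X,Y_i)$ is unchanged because the degrees $X$ and $Y_i$ depend only on the $L$-components. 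Matching the two sides summand by summand then finishes the proof. The main (and essentially the only) obstacle is the bookkeeping of the $A$-scalars: one must confirm that pulling $b_i$ into the inner bracket on the right and then reordering by commutativity reproduces exactly the factor $a_1\cdots a_{n-1}b_1\cdots b_n$ obtained on the left, and that the grading's blindness to $A$ keeps every bicharacter coefficient intact.
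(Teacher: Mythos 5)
Your proposal is correct and is precisely the ``straightforward computation'' that the paper's proof omits: placing $A$ in degree $e$ so the bicharacter sees only the $L$-components, using commutativity of $A$ for the $\varepsilon$-skew-symmetry and for regrouping the scalars $b_1\cdots b_{i-1}(a_1\cdots a_{n-1}b_i)b_{i+1}\cdots b_n$ on the right-hand side of the Hom-Jacobi identity, and reducing everything else to the corresponding axiom in $L$. No gaps; this matches the intended argument.
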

\begin{proof}
 It follows from a straightforward computation.
\end{proof}

In the next definition, we introduce an element of the centroid (or semi-morphism) for $n$-Hom-Lie color algebra.
\begin{definition}
 A semi-morphism \index{Semi-morphism!n-Hom-Lie color algebra}  of an $n$-Hom-Lie color algebra $(L, [\cdot, \dots, \cdot], \varepsilon, \alpha)$ is an even linear map $\beta : L\rightarrow L$ such
that $\beta\alpha=\alpha\beta$ and
$\beta[x_1, \dots, x_n]=[\beta(x_1), x_2, \dots, x_n].$
\end{definition}
\begin{remark}
Due to $\varepsilon$-skew-symmetry, $\beta[x_1, \dots, x_n]=[x_1, \dots\beta(x_i), \dots, x_n].$ \end{remark}
\begin{theorem}\label{BakayokoSilvestrov:tch}
 Let $(L, [\cdot, \dots, \cdot], \varepsilon, \alpha)$ be an $n$-Hom-Lie color algebra and $\beta : L\rightarrow L$ a semi-morphism of $L$.
Define a new multiplication $\{\cdot, \dots, \cdot\}: L\times\dots\times L\rightarrow L$ by
$$\{x_1, \dots, x_n\}=[x_1, \dots, \beta(x_i)\dots, x_n]$$
Then $(L, \{\cdot, \dots, \cdot\}, \varepsilon, \alpha)$ is also an $n$-Hom-Lie color algebra.
\end{theorem}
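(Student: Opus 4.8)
The plan is to verify the two defining axioms of an $n$-Hom-Lie color algebra for $(L, \{\cdot,\dots,\cdot\}, \varepsilon, \alpha)$: the $\varepsilon$-skew-symmetry of $\{\cdot,\dots,\cdot\}$ and the Hom-Jacobi identity with the \emph{same} twisting map $\alpha$. The observation I would record first is that the semi-morphism property together with the preceding Remark gives $\{x_1,\dots,x_n\} = \beta[x_1,\dots,x_n]$ independently of the slot into which $\beta$ is inserted; equivalently, a single factor of $\beta$ may be moved freely between any one argument of a bracket and the bracket itself. This reduces the whole argument to pushing copies of $\beta$ around using $\beta\alpha=\alpha\beta$ and this repositioning rule.

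First I would dispose of the routine points. The map $\{\cdot,\dots,\cdot\}=\beta[\cdot,\dots,\cdot]$ is $n$-linear, and since $\beta$ is even it sends $(L_{a_1},\dots,L_{a_n})$ into $L_{a_1+\dots+a_n}$, so it is graded. Skew-symmetry follows at once: applying $\beta$ to the identity $[x_1,\dots,x_i,x_{i+1},\dots,x_n]=-\varepsilon(x_i,x_{i+1})[x_1,\dots,x_{i+1},x_i,\dots,x_n]$ and using linearity of $\beta$ yields the same relation for $\{\cdot,\dots,\cdot\}$. As $\alpha$ is unchanged it remains an even linear map, so only the Hom-Jacobi identity requires real work.

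The main step is therefore the Hom-Jacobi identity. I would start from the left-hand side $\{\alpha(x_1),\dots,\alpha(x_{n-1}),\{y_1,\dots,y_n\}\}=\beta[\alpha(x_1),\dots,\alpha(x_{n-1}),\beta[y_1,\dots,y_n]]$, rewrite the inner factor as $\beta[y_1,\dots,y_n]=[\beta(y_1),y_2,\dots,y_n]$ by the semi-morphism property, and then apply the Hom-Jacobi identity of the original algebra to the bracket sitting inside the outer $\beta$, with $y_1$ replaced by $\beta(y_1)$. Since $\beta$ is even the degrees are preserved, so each factor $\varepsilon(X,Y_i)$ is unchanged. Pulling the outer $\beta$ through the resulting sum by linearity leaves, in the $i$-th summand, a single extra factor of $\beta$ sitting on $\alpha(y_1)$ (for $i\ge 2$) or on the inner commutator $[x_1,\dots,x_{n-1},y_1]$ (for $i=1$).

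To finish I would compare with the right-hand side, whose $i$-th summand is $\varepsilon(X,Y_i)\,\beta[\alpha(y_1),\dots,\alpha(y_{i-1}),\beta[x_1,\dots,x_{n-1},y_i],\alpha(y_{i+1}),\dots,\alpha(y_n)]$, that is, the same bracket carrying one extra $\beta$ on its $i$-th slot. Here the repositioning rule does the decisive work: a lone factor of $\beta$ on any single argument of a bracket equals $\beta$ applied to the whole bracket, so both the $i$-th summand of the left-hand side and that of the right-hand side collapse to $\varepsilon(X,Y_i)\,\beta^2[\alpha(y_1),\dots,\alpha(y_{i-1}),[x_1,\dots,x_{n-1},y_i],\alpha(y_{i+1}),\dots,\alpha(y_n)]$, using $\alpha\beta=\beta\alpha$ to commute $\beta$ past $\alpha$. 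Summing over $i$ then gives the identity. The only genuine obstacle is the bookkeeping: one must check that the stray $\beta$ produced on the \emph{first} factor by the Hom-Jacobi identity is interchangeable with the $\beta$ sitting on the inner commutator in the right-hand side, and this interchangeability is exactly the content of the Remark following the definition of semi-morphism.
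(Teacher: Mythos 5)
Your argument is correct. The one genuinely load-bearing ingredient is the same in both proofs, namely the Remark following the definition of semi-morphism: since $\beta$ is even, $\varepsilon$-skew-symmetry lets a single factor of $\beta$ migrate freely between any one slot of a bracket and the bracket as a whole, so $\{x_1,\dots,x_n\}=\beta[x_1,\dots,x_n]$ independently of the chosen slot. Where you differ from the paper is in the organization of the Hom-Jacobi computation. The paper keeps the two inserted $\beta$'s in their designated slots ($i$ in the outer bracket, $j$ in the inner one), applies the Hom-Jacobi identity of $(L,[\cdot,\dots,\cdot])$ with $x$-arguments $x_1,\dots,\beta(x_i),\dots,x_{n-1}$ and $y$-arguments $y_1,\dots,\beta(y_j),\dots,y_n$, and is then forced into a three-way case split ($k<j$, $k=j$, $k>j$) to track where the stray $\beta$'s land in each summand before reassembling the new brackets. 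You instead normalize everything to $\beta[\cdot,\dots,\cdot]$ at the outset, apply the original Hom-Jacobi identity once (with only $y_1$ carrying a $\beta$, absorbed again immediately), and observe that each summand of either side collapses to $\varepsilon(X,Y_i)\,\beta^2[\alpha(y_1),\dots,\alpha(y_{i-1}),[x_1,\dots,x_{n-1},y_i],\alpha(y_{i+1}),\dots,\alpha(y_n)]$, using $\beta\alpha=\alpha\beta$ and the evenness of $\beta$ to keep the $\varepsilon$-factors intact. This eliminates the case analysis entirely and makes the proof essentially a two-line consequence of the repositioning rule; the paper's version has the minor virtue of exhibiting explicitly how the new bracket $\{\cdot,\dots,\cdot\}$ reappears slot by slot in each of the three regimes, which is perhaps more transparent if one distrusts the normalization $\{x_1,\dots,x_n\}=\beta[x_1,\dots,x_n]$. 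Your bookkeeping of the $i=1$ versus $i\ge 2$ terms is accurate, and the skew-symmetry and grading checks are handled correctly, so there is no gap.
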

\begin{proof}
 The proof can be obtained as follows:
 \begin{eqnarray} &&\{\alpha(x_1), \dots, \alpha(x_{n-1}), \{y_1, \dots, y_n\}\}=\nonumber\\
&&=[\alpha(x_1), \dots, \beta\alpha(x_i), \dots, \alpha(x_{n-1}), [y_1, \dots, \beta(y_j), \dots, y_n]]\nonumber\\
&&=[\alpha(x_1), \dots, \alpha\beta(x_i), \dots, \alpha(x_{n-1}), [y_1, \dots, \beta(y_j), \dots, y_n]]\nonumber\\
&&
=\sum_{k<j}\varepsilon(X, Y_k)[\alpha(y_1), \dots, \alpha(y_{k-1}),[x_1, \dots, \beta(x_i), \dots, x_{n-1}, y_k], \nonumber \\
&& \hspace{6cm} \alpha(y_{k+1}), \dots, \alpha\beta(y_j), \dots \alpha(y_n)]
\nonumber\\
&&+\varepsilon(X, Y_j)[\alpha(y_1), \dots, \alpha(y_{j-1}),
[x_1, \dots, \beta(x_i), \dots, x_{n-1}, \beta(y_j)], \alpha(y_{j+1}), \dots, \alpha(y_n)]\nonumber\\
&&
+\sum_{k>j}\varepsilon(X, Y_k)[\alpha(y_1), \dots, \alpha\beta(y_{j}), \dots, \alpha(y_{k-1})
[x_1, \dots, \beta(x_i), \dots, x_{n-1}, y_k],  \nonumber \\
&& \hspace{8cm} \alpha(y_{k+1}), \dots, \alpha(y_n)]
\nonumber\\
&&
=\sum_{k<j}\varepsilon(X, Y_k)[\alpha(y_1), \dots, \alpha(y_{k-1}),
\{x_1, \dots, x_i, \dots, x_{n-1}, y_k\},  \nonumber \\
&& \hspace{6cm}  \alpha(y_{k+1}), \dots, \beta\alpha(y_j), \dots \alpha(y_n)]
\nonumber\\
&&+\varepsilon(X, Y_j)[\alpha(y_1), \dots, \alpha(y_{j-1}),
\beta(\{x_1, \dots, x_i, \dots, x_{n-1}, y_j\}), \alpha(y_{j+1}), \dots, \alpha(y_n)]\nonumber\\
&&+\sum_{k>j}\varepsilon(X, Y_k)[\alpha(y_1), \dots, \beta\alpha(y_{j}), \dots, \alpha(y_{k-1}),
\{x_1, \dots, x_i, \dots, x_{n-1}, y_k\}, \nonumber\\
&& \hspace{8cm} \alpha(y_{k+1}), \dots, \alpha(y_n)]\nonumber\\
&&=\sum_{k<j}\varepsilon(X, Y_k)\{\alpha(y_1), \dots, \alpha(y_{k-1}),
\{x_1, \dots, x_i, \dots, x_{n-1}, y_k\}, \nonumber \\
&& \hspace{6cm}  \alpha(y_{k+1}), \dots, \alpha(y_j), \dots \alpha(y_n)\}\nonumber\\
&&+\varepsilon(X, Y_j)\{\alpha(y_1), \dots, \alpha(y_{j-1}),
\{x_1, \dots, x_i, \dots, x_{n-1}, y_j\}, \alpha(y_{j+1}), \dots, \alpha(y_n)\}\nonumber\\
&&+\sum_{k>j}\varepsilon(X, Y_k)\{\alpha(y_1), \dots, \alpha(y_{j}), \dots, \alpha(y_{k-1})
\{x_1, \dots, x_i, \dots, x_{n-1}, y_k\}, \nonumber \\
&& \hspace{8cm} \alpha(y_{k+1}), \dots, \alpha(y_n)\}\nonumber.
 \end{eqnarray}
This completes the proof.
 \end{proof}
\begin{corollary}
 Let $(L, [\cdot, \dots, \cdot], \varepsilon)$ be an $n$-Lie color algebra and $\alpha : L\rightarrow L$ a semi-morphism of $L$.
Then $(L, \{\cdot, \dots, \cdot\}, \varepsilon)$ is  another $n$-Lie color algebra, with
$$\{x_1, \dots, x_n\}=[x_1, \dots, \alpha(x_i)\dots, x_n]$$
\end{corollary}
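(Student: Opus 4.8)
The plan is to obtain this as an immediate specialization of Theorem \ref{BakayokoSilvestrov:tch}. The essential observation, already recorded in the remark following the definition of $n$-Hom-Lie color algebras, is that an $n$-Lie color algebra is precisely an $n$-Hom-Lie color algebra whose twisting map is the identity. I would therefore begin by regarding $(L, [\cdot, \dots, \cdot], \varepsilon)$ as the $n$-Hom-Lie color algebra $(L, [\cdot, \dots, \cdot], \varepsilon, \mathrm{id})$.

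Next I would verify that the semi-morphism $\alpha$ of the statement satisfies the hypotheses of Theorem \ref{BakayokoSilvestrov:tch} with respect to this twisting map. In the definition of a semi-morphism the commutation condition $\beta\alpha = \alpha\beta$ becomes $\alpha \circ \mathrm{id} = \mathrm{id} \circ \alpha$, which holds trivially, while the remaining condition $\alpha[x_1, \dots, x_n] = [\alpha(x_1), x_2, \dots, x_n]$ is exactly the defining property of $\alpha$ as a semi-morphism. Hence $\alpha$ here plays the role of the endomorphism $\beta$ appearing in Theorem \ref{BakayokoSilvestrov:tch}.

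Applying Theorem \ref{BakayokoSilvestrov:tch} with twisting map $\mathrm{id}$ and semi-morphism $\beta = \alpha$ then shows that $(L, \{\cdot, \dots, \cdot\}, \varepsilon, \mathrm{id})$, where $\{x_1, \dots, x_n\} = [x_1, \dots, \alpha(x_i), \dots, x_n]$, is again an $n$-Hom-Lie color algebra; crucially, the theorem leaves the twisting map unchanged, so it remains the identity. Invoking the same remark in the opposite direction, an $n$-Hom-Lie color algebra with identity twist is an $n$-Lie color algebra, which is the desired conclusion.

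Since this is a direct corollary, I do not anticipate any genuine obstacle: the full structure, including the $\varepsilon$-skew-symmetry of the new bracket, is delivered by Theorem \ref{BakayokoSilvestrov:tch}. The single point deserving attention is purely notational, namely that the symbol $\alpha$ denotes the semi-morphism in the corollary but the twisting map in Theorem \ref{BakayokoSilvestrov:tch}; keeping these two roles distinct is all that the argument requires.
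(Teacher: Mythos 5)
Your proposal is correct and is exactly the specialization the paper intends: the corollary is stated immediately after Theorem \ref{BakayokoSilvestrov:tch} with no separate proof, the implicit argument being precisely to take the twisting map to be the identity (under which the commutation condition on the semi-morphism is vacuous) and read off the conclusion. Your careful note about the clash of notation between the $\alpha$ of the corollary and the $\alpha$ of the theorem is a sensible addition but changes nothing of substance.
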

\begin{definition}
 Let $(L, [\cdot, \dots, \cdot], \varepsilon, \alpha)$ be an $n$-Hom-Lie color algebra. An averaging operator
 of an $n$-Hom-Lie color algebra $L$ \index{Averaging operator!n-Hom-Lie color algebra} is an even linear map
 $\beta : L\rightarrow L$ such that
\begin{enumerate}
 \item [(1)] $\beta\alpha=\alpha\beta$
\item [(2)] $\beta[x_1, \dots, \beta(x_i), \dots, x_n]=[x_1, \dots, \beta(x_i), \dots, \beta(x_j), \dots, x_n]$
\end{enumerate}

\end{definition}
\begin{theorem}
 Let $(L, [\cdot, \dots, \cdot], \varepsilon)$ be an $n$-Lie color algebra and $\alpha : L\rightarrow L$ an averaging operator of $L$.
Define a new multiplication $\{\cdot, \dots, \cdot\}: L\times\dots\times L\rightarrow L$ by
$$\{x_1, \dots, x_n\}=[x_1, \dots, \alpha(x_i)\dots, x_n].$$
Then $L_\alpha=(L, \{\cdot, \dots, \cdot\}, \varepsilon, \alpha)$ is an $n$-Hom-Lie color algebra.
\end{theorem}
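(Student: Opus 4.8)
The plan is to verify the two defining axioms of an $n$-Hom-Lie color algebra for the quadruple $(L, \{\cdot,\dots,\cdot\}, \varepsilon, \alpha)$, taking the averaging operator $\alpha$ itself as the new twisting map. The argument should parallel the proof of Theorem~\ref{BakayokoSilvestrov:tch}: there the inserted map was a semi-morphism and the twist was left unchanged, whereas here the inserted map is the averaging operator $\alpha$, it becomes the twist, and the defining averaging identity (2) takes over the role played by the semi-morphism property. Since $(L,[\cdot,\dots,\cdot],\varepsilon)$ is an $n$-Lie color algebra, the original twist is $\mathrm{id}$, so condition (1) of the averaging operator is automatic and only condition (2) will be needed.

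First I would check the $\varepsilon$-skew-symmetry of $\{\cdot,\dots,\cdot\}$. As $\alpha$ is even, inserting it in one slot changes no degrees, so every transposition of the other slots carries the same bicharacter factor as for $[\cdot,\dots,\cdot]$, and $\varepsilon$-skew-symmetry in those slots is inherited from the original bracket. Making sure that the insertion yields a well-defined, fully skew-symmetric $n$-linear map, rather than one depending on the chosen slot, is the first point requiring care, precisely because, unlike a semi-morphism, $\alpha$ cannot be freely relocated among the arguments; here condition (2) is the tool that reconciles the different placements.

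Next I would establish the twisted Nambu--Filippov identity with twist $\alpha$. Starting from the left-hand side $\{\alpha(x_1),\dots,\alpha(x_{n-1}),\{y_1,\dots,y_n\}\}$, I would unfold both occurrences of $\{\cdot,\dots,\cdot\}$, apply the ordinary ($\mathrm{id}$-twisted) $n$-Lie color identity obeyed by $[\cdot,\dots,\cdot]$, and split the resulting sum into the diagonal term $k=j$ and the off-diagonal terms $k\neq j$, where $j$ marks the slot that carries the inserted $\alpha$ inside $\{y_1,\dots,y_n\}$. The goal is to match this, summand by summand, with the right-hand side $\sum_i \varepsilon(X,Y_i)\{\alpha(y_1),\dots,\{x_1,\dots,x_{n-1},y_i\},\dots,\alpha(y_n)\}$, using the evenness of $\alpha$ so that the bicharacter weights $\varepsilon(X,Y_i)$ coincide on both sides.

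The crux, and the step I expect to be the main obstacle, is the redistribution of the twisting map. The expansion produces $\alpha$ concentrated on the $x$-arguments of the inner brackets, while the $n$-Hom-Lie color identity requires it spread over the outer $y$-arguments; reconciling these two placements is exactly what the averaging identity (2) delivers, its sharpest use being in the diagonal term, where a bracket carrying $\alpha$ on two slots must be rewritten as $\alpha$ applied to the single-insertion bracket $\{x_1,\dots,x_{n-1},y_j\}$. Careful bookkeeping of where each $\alpha$ lands, together with repeated appeals to (2), should bring the diagonal and off-diagonal contributions into agreement with the right-hand side, completing the proof.
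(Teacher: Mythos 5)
Your plan follows the paper's own proof essentially verbatim: the paper unfolds both occurrences of $\{\cdot,\dots,\cdot\}$, applies the untwisted $n$-Lie color identity, splits the resulting sum into the cases $k<j$, $k=j$ and $k>j$ according to the slot carrying the inserted $\alpha$, and uses the averaging identity (2) to redistribute the twisting maps so that each term, the diagonal one in particular, is recognized as a $\{\cdot,\dots,\cdot\}$-expression with twist $\alpha$. Your extra remark about the well-definedness of the insertion slot is a point the paper leaves implicit, but it does not alter the route.
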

\begin{proof} The proof can be obtained as follows:
 \begin{eqnarray}
  &&\hspace{-1,3cm} \{\alpha(x_1), \dots, \alpha(x_{n-1}), \{y_1, \dots, y_n\}\} = [\alpha(x_1), \dots, \alpha^2(x_i), \dots, \alpha(x_{n-1}), [y_1, \dots, \alpha(y_j), \dots, y_n]]\nonumber\\
&&=\sum_{k<j}\varepsilon(X, Y_k)[\alpha(y_1), \dots, \alpha(y_{k-1}),
[x_1, \dots, \alpha(x_i), \dots, x_{n-1}, y_k], \nonumber \\
&& \hspace{6cm}
\alpha(y_{k+1}), \dots, \alpha^2(y_j), \dots \alpha(y_n)]\nonumber\\
&&+\varepsilon(X, Y_j)[\alpha(y_1), \dots, \alpha(y_{j-1}),
[x_1, \dots, \alpha(x_i), \dots, x_{n-1}, \alpha(y_j)],  \nonumber \\
&& \hspace{6cm}
\alpha(y_{j+1}), \dots, \alpha(y_n)]\nonumber\\
&&+\sum_{k>j}\varepsilon(X, Y_k)[\alpha(y_1), \dots, \alpha^2(y_{j}), \dots, \alpha(y_{k-1})
[x_1, \dots, \alpha(x_i), \dots, x_{n-1}, y_k],
\nonumber \\
&& \hspace{8cm}
\alpha(y_{k+1}), \dots, \alpha(y_n)]\nonumber\\
&&=\sum_{k<j}\varepsilon(X, Y_k)\{\alpha(y_1), \dots, \alpha(y_{k-1}),
\{x_1, \dots, x_i, \dots, x_{n-1}, y_k\},
\nonumber \\
&& \hspace{6cm}
\alpha(y_{k+1}), \dots, \alpha(y_j), \dots \alpha(y_n)\}\nonumber\\
&&+\varepsilon(X, Y_j)[\alpha(y_1), \dots, \alpha(y_{j-1}),
\alpha(\{x_1, \dots, x_i, \dots, x_{n-1}, y_j\}), \alpha(y_{j+1}), \dots, \alpha(y_n)]\nonumber\\
&&\quad+\sum_{k>j}\varepsilon(X, Y_k)[\alpha(y_1), \dots, \alpha(y_{j}), \dots, \alpha(y_{k-1}),
\{x_1, \dots, x_i, \dots, x_{n-1}, y_k\},
\nonumber \\
&& \hspace{8cm}
\alpha(y_{k+1}), \dots, \alpha(y_n)]\nonumber\\
&&=\sum_{k<j}\varepsilon(X, Y_k)\{\alpha(y_1), \dots, \alpha(y_{k-1}),
\{x_1, \dots, x_i, \dots, x_{n-1}, y_k\},
\nonumber \\
&& \hspace{6cm}
\alpha(y_{k+1}), \dots, \alpha(y_j), \dots \alpha(y_n)\}\nonumber\\
&&+\varepsilon(X, Y_j)\{\alpha(y_1), \dots, \alpha(y_{j-1}),
\{x_1, \dots, x_i, \dots, x_{n-1}, y_j\}, \alpha(y_{j+1}), \dots, \alpha(y_n)\}\nonumber\\
&&+\sum_{k>j}\varepsilon(X, Y_k)\{\alpha(y_1), \dots, \alpha(y_{j}), \dots, \alpha(y_{k-1})
\{x_1, \dots, x_i, \dots, x_{n-1}, y_k\},
\nonumber \\
&& \hspace{8cm}
\alpha(y_{k+1}), \dots, \alpha(y_n)\}\nonumber.
 \end{eqnarray}
This ends the proof.
 \end{proof}

\begin{theorem}
 Let $(L, [\cdot, \dots, \cdot], \varepsilon, \alpha)$ be an $n$-Hom-Lie color algebra and $\beta : L\rightarrow L$ an averaging operator of $L$.
Define a new multiplication $\{\cdot, \dots, \cdot\}: L\times\dots\times L\rightarrow L$ by
$$\{x_1, \dots, x_n\}=[x_1, \dots, \beta(x_i)\dots, x_n]$$
Then $(L, \{\cdot, \dots, \cdot\}, \varepsilon, \alpha)$ is also an $n$-Hom-Lie color algebra.
\end{theorem}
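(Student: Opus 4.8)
The plan is to mirror the proof of the preceding theorem on averaging operators of $n$-Lie color algebras, the only structural change being that the $n$-Lie color fundamental identity invoked there is replaced by the full $n$-Hom-Lie color identity of $(L,[\cdot,\dots,\cdot],\varepsilon,\alpha)$, while the twist $\alpha$ is carried along unchanged. Since $\beta$ and $\alpha$ are both even and commute, the new bracket $\{x_1,\dots,x_n\}=[x_1,\dots,\beta(x_i),\dots,x_n]$ is again $\varepsilon$-skew-symmetric and $\alpha$ is an even twist for it; hence the content of the statement is the fundamental identity for $\{\cdot,\dots,\cdot\}$.

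First I would expand the left-hand side $\{\alpha(x_1),\dots,\alpha(x_{n-1}),\{y_1,\dots,y_n\}\}$ by writing both the outer and the inner $\{\cdot\}$ in terms of $[\cdot,\dots,\cdot]$, placing $\beta$ on one of the first $n-1$ entries of the outer bracket and on the entry $y_j$ of the inner one. Using $\beta\alpha=\alpha\beta$ to rewrite $\beta\alpha(x_i)=\alpha\beta(x_i)$, the expression becomes $[\alpha(x_1),\dots,\alpha\beta(x_i),\dots,\alpha(x_{n-1}),[y_1,\dots,\beta(y_j),\dots,y_n]]$. Now I would apply the $n$-Hom-Lie color fundamental identity with the $i$-th outer argument read as $\beta(x_i)$ and the $j$-th inner argument read as $\beta(y_j)$; because $\beta$ is even the gradings are unchanged, so each summand carries the factor $\varepsilon(X,Y_k)$.

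The resulting sum splits into the three ranges $k<j$, $k=j$, and $k>j$. For $k\neq j$ the inner bracket is $[x_1,\dots,\beta(x_i),\dots,x_{n-1},y_k]=\{x_1,\dots,x_{n-1},y_k\}$, while the entry $y_j$ now appears as $\alpha\beta(y_j)=\beta\alpha(y_j)$ inside the outer bracket; reading that outer bracket, which carries a single $\beta$ on its $j$-th slot, back as a $\{\cdot\}$-bracket reproduces exactly the $k\neq j$ summands of the desired right-hand side. The decisive step is the term $k=j$: here the inner bracket is $[x_1,\dots,\beta(x_i),\dots,x_{n-1},\beta(y_j)]$, carrying two copies of $\beta$. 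This is precisely the configuration governed by the averaging identity (2), which lets me collapse it to $\beta[x_1,\dots,\beta(x_i),\dots,x_{n-1},y_j]=\beta\{x_1,\dots,x_{n-1},y_j\}$, so that the $j$-th slot of the outer bracket again carries a single $\beta$ and can be folded into a $\{\cdot\}$-bracket.

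Collecting the three ranges yields $\sum_{k=1}^n\varepsilon(X,Y_k)\{\alpha(y_1),\dots,\alpha(y_{k-1}),\{x_1,\dots,x_{n-1},y_k\},\alpha(y_{k+1}),\dots,\alpha(y_n)\}$, which is the fundamental identity for $\{\cdot,\dots,\cdot\}$. I expect the main obstacle to be the $k=j$ term: it is the only place where a second $\beta$ is generated, and controlling it is exactly what property (2) is for, so this is where the argument genuinely uses that $\beta$ is an averaging operator rather than merely a semi-morphism. A secondary point requiring care is the bookkeeping of $\beta$ against $\alpha$ through $\beta\alpha=\alpha\beta$ and the reading of a bracket with a single $\beta$ on a displaced slot back as a $\{\cdot\}$-bracket, carried out exactly as in the proof of the preceding theorem.
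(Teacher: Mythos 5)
Your proposal is correct and follows essentially the same route as the paper: the paper's proof of this theorem is literally a deferral to the semi-morphism computation of Theorem \ref{BakayokoSilvestrov:tch}, whose $k<j$ / $k=j$ / $k>j$ decomposition you reproduce, with the averaging identity (2) replacing the semi-morphism identity precisely at the $k=j$ term, exactly as in the paper's preceding averaging-operator theorem. The only point you (like the paper) leave implicit is that a bracket carrying a single $\beta$ on a displaced slot may be read back as a $\{\cdot,\dots,\cdot\}$-bracket, which for an averaging operator is not the automatic consequence of $\varepsilon$-skew-symmetry that it is for a semi-morphism.
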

\begin{proof} It is similar to the one of Theorem \ref{BakayokoSilvestrov:tch}.
 \end{proof}
Taking $\alpha=id$, yields the following statement.
\begin{corollary}
 Let $(L, [\cdot, \dots, \cdot], \varepsilon)$ be an $n$-Lie color algebra and $\alpha : L\rightarrow L$ an averaging operator of $L$.
Then $(L, \{\cdot, \dots, \cdot\}, \varepsilon)$ is  another $n$-Lie color algebra, with
$$\{x_1, \dots, x_n\}=[x_1, \dots, \alpha(x_i)\dots, x_n]$$
\end{corollary}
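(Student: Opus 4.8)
The plan is to read this corollary as the specialization $\alpha=id$ of the theorem immediately preceding it, rather than to redo any computation. That theorem takes an $n$-Hom-Lie color algebra $(L,[\cdot,\dots,\cdot],\varepsilon,\alpha)$ together with an averaging operator $\beta$ and produces a new $n$-Hom-Lie color algebra $(L,\{\cdot,\dots,\cdot\},\varepsilon,\alpha)$ with $\{x_1,\dots,x_n\}=[x_1,\dots,\beta(x_i),\dots,x_n]$; all of the substance, namely the $\varepsilon$-twisted color Jacobi identity, is verified there (by the argument modelled on Theorem~\ref{BakayokoSilvestrov:tch}), so here only the passage to $\alpha=id$ remains.

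First I would recall the Remark following the definition of $n$-Hom-Lie color algebra, whose item $1)$ records that taking the twisting map to be $id$ identifies $n$-Lie color algebras with $n$-Hom-Lie color algebras having twisting map $id$. Accordingly, I view the given $n$-Lie color algebra $(L,[\cdot,\dots,\cdot],\varepsilon)$ as the $n$-Hom-Lie color algebra $(L,[\cdot,\dots,\cdot],\varepsilon,id)$.

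Next I would check that the map called $\alpha$ in the corollary is precisely the averaging operator $\beta$ of the theorem for this choice of ambient structure. With the twisting map equal to $id$, condition $(1)$ in the definition of an averaging operator, $\beta\,id=id\,\beta$, holds automatically, and condition $(2)$ is stated without reference to the twisting map, so it is unchanged. Hence the corollary's $\alpha$ is an averaging operator of $(L,[\cdot,\dots,\cdot],\varepsilon,id)$, and applying the preceding theorem gives that $(L,\{\cdot,\dots,\cdot\},\varepsilon,id)$, with $\{x_1,\dots,x_n\}=[x_1,\dots,\alpha(x_i),\dots,x_n]$, is again an $n$-Hom-Lie color algebra. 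Invoking the same Remark once more, an $n$-Hom-Lie color algebra with twisting map $id$ is exactly an $n$-Lie color algebra, so $(L,\{\cdot,\dots,\cdot\},\varepsilon)$ is an $n$-Lie color algebra, which is the assertion.

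I expect no genuine mathematical obstacle here, since the heavy identity has already been discharged in the theorem; the only points requiring care are clerical. One must keep the two maps distinct — the $\alpha$ appearing in the corollary is the averaging operator (the $\beta$ of the theorem), while the ambient twisting has been frozen at $id$ — and one should confirm that the $\varepsilon$-skew-symmetry of $\{\cdot,\dots,\cdot\}$ is inherited from that of $[\cdot,\dots,\cdot]$; this is part of the statement that the specialized structure is $n$-Hom-Lie color and is therefore already furnished by the theorem being specialized.
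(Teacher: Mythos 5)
Your proposal is correct and is exactly the paper's own argument: the paper introduces this corollary with the line ``Taking $\alpha=id$, yields the following statement,'' i.e.\ it specializes the immediately preceding averaging-operator theorem to trivial twisting, with the corollary's $\alpha$ playing the role of the theorem's $\beta$. Your bookkeeping of the two maps and of why condition $(1)$ of the averaging-operator definition becomes vacuous matches the intended reading.
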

Taking $\alpha=\beta$, yields the following statement.
\begin{corollary}
 Let $(L, [\cdot, \dots, \cdot], \varepsilon, \alpha)$ be an $n$-Hom-Lie color algebra and $\alpha : L\rightarrow L$ an averaging operator.
Define a new multiplication $\{\cdot, \dots, \cdot\}: L\times\dots\times L\rightarrow L$ by
$$\{x_1, \dots, x_n\}=[x_1, \dots, \alpha(x_i)\dots, x_n]$$
Then $(L, \{\cdot, \dots, \cdot\}, \varepsilon, \alpha)$ is also an $n$-Hom-Lie color algebra.
\end{corollary}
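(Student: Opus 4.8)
The plan is to check the two defining conditions of an $n$-Hom-Lie color algebra for $(L,\{\cdot,\dots,\cdot\},\varepsilon,\alpha)$, following the computation in the proof of Theorem~\ref{BakayokoSilvestrov:tch} almost line by line, with the averaging axiom~(2) taking over the role played there by the semi-morphism identity. The $\varepsilon$-skew-symmetry of $\{\cdot,\dots,\cdot\}$ is inherited from that of $[\cdot,\dots,\cdot]$ together with the evenness of $\beta$, exactly as in the preceding constructions, so the whole substance lies in the Hom-Nambu-Filippov identity.

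First I would unfold the left-hand side $\{\alpha(x_1),\dots,\alpha(x_{n-1}),\{y_1,\dots,y_n\}\}$. The inner $\{\cdot,\dots,\cdot\}$ produces $[y_1,\dots,\beta(y_j),\dots,y_n]$, and the outer one inserts a $\beta$ on its $i$-th slot, turning $\alpha(x_i)$ into $\beta\alpha(x_i)$. Invoking condition~(1), $\beta\alpha=\alpha\beta$, I would rewrite $\beta\alpha(x_i)=\alpha(\beta(x_i))$, so that the first $n-1$ entries become $\alpha(u_1),\dots,\alpha(u_{n-1})$ with $u_i=\beta(x_i)$ and $u_\ell=x_\ell$ for $\ell\neq i$. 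The expression is then precisely of the form to which the fundamental identity of $(L,[\cdot,\dots,\cdot],\varepsilon,\alpha)$ applies; since $\beta$ is even the degrees are unchanged, $U=X$ and $W_k=Y_k$, so the coefficients $\varepsilon(X,Y_k)$ survive verbatim.

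Applying that identity and splitting the resulting sum into the ranges $k<j$, $k=j$, $k>j$ is the core of the argument. For $k\neq j$ the inner bracket is $[x_1,\dots,\beta(x_i),\dots,x_{n-1},y_k]=\{x_1,\dots,x_{n-1},y_k\}$, while the single remaining $\beta$ rides on the outer $j$-th slot as $\alpha\beta(y_j)=\beta\alpha(y_j)$, so the term is already a value of $\{\cdot,\dots,\cdot\}$. The delicate case is $k=j$: here the inner bracket is $[x_1,\dots,\beta(x_i),\dots,x_{n-1},\beta(y_j)]$, carrying \emph{two} copies of $\beta$. This is exactly the spot where the proof of Theorem~\ref{BakayokoSilvestrov:tch} used the semi-morphism relation to pull a $\beta$ out; in its place I would apply the averaging axiom~(2) to collapse the two $\beta$'s, rewriting this bracket as $\beta[x_1,\dots,\beta(x_i),\dots,x_{n-1},y_j]=\beta(\{x_1,\dots,x_{n-1},y_j\})$, after which the outer bracket reads as $\{\alpha(y_1),\dots,\{x_1,\dots,x_{n-1},y_j\},\dots,\alpha(y_n)\}$ with $\beta$ absorbed into the slot holding the new inner bracket.

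Reassembling the three groups yields $\sum_{k=1}^{n}\varepsilon(X,Y_k)\{\alpha(y_1),\dots,\{x_1,\dots,x_{n-1},y_k\},\dots,\alpha(y_n)\}$, which is the right-hand side of the Hom-Nambu-Filippov identity for $\{\cdot,\dots,\cdot\}$, finishing the proof. I expect the main obstacle to be bookkeeping rather than conceptual difficulty: one must keep track of which slot the lone surviving $\beta$ occupies in each term — the $j$-th outer argument when $k\neq j$, and the new inner bracket when $k=j$ — and then use $\varepsilon$-skew-symmetry and the evenness of $\beta$ to read every term uniformly as a value of $\{\cdot,\dots,\cdot\}$. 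The averaging axiom~(2) enters exactly once, in the $k=j$ term, and it is precisely what makes an averaging operator, rather than an arbitrary even map commuting with $\alpha$, the right hypothesis for this construction.
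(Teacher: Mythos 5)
Your proposal is correct and follows essentially the same route as the paper: the paper obtains this corollary by setting $\beta=\alpha$ in the preceding theorem on averaging operators, whose proof is in turn deferred to the computation of Theorem~\ref{BakayokoSilvestrov:tch}, and your argument is exactly that computation with the averaging axiom~(2) replacing the semi-morphism identity in the $k=j$ term. You correctly locate the single place where the averaging hypothesis is used, and your implicit assumption that the slot carrying $\beta$ may be read uniformly as a value of $\{\cdot,\dots,\cdot\}$ is the same convention the paper itself uses in its displayed computations.
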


\begin{theorem}
 Let $(L, [\cdot, \dots, \cdot], \varepsilon, \alpha)$ be an $n$-Hom-Lie color algebra and $\beta : L\rightarrow L$ an averaging operator of $L$.
Then the new bracket $\{\cdot, \dots, \cdot\}: L\times\dots\times L\rightarrow L$ makes $L$ into an $n$-Hom-Lie color algebra with
$$\{x_1, \dots, x_n\}=[x_1, \dots, \beta(x_i)\dots, \beta(x_j), \dots, x_n].$$
\end{theorem}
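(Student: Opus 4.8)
The plan is to avoid a direct (and lengthy) verification of the color Hom-Nambu--Filippov identity by reducing the doubly twisted bracket to an iteration of two constructions that are already available. Write $i<j$ for the two twisted slots and set $\langle x_1,\dots,x_n\rangle:=[x_1,\dots,\beta(x_i),\dots,x_n]$ for the single-slot twist at position $i$. By the earlier averaging-operator construction with one insertion of $\beta$ (the single-twist averaging theorem), $(L,\langle\cdot,\dots,\cdot\rangle,\varepsilon,\alpha)$ is already an $n$-Hom-Lie color algebra with the \emph{same} twisting map $\alpha$. The target bracket will then be recognized as a semi-morphism twist of $\langle\cdot,\dots,\cdot\rangle$, so that Theorem~\ref{BakayokoSilvestrov:tch} closes the argument.

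First I would record the identity that drives the whole reduction. Since $i\neq j$, the defining property (2) of an averaging operator applies to the slots $i$ and $j$ and gives
$$\beta\langle x_1,\dots,x_n\rangle=\beta[x_1,\dots,\beta(x_i),\dots,x_n]=[x_1,\dots,\beta(x_i),\dots,\beta(x_j),\dots,x_n]=\{x_1,\dots,x_n\}.$$
Thus the doubly twisted bracket is exactly $\beta$ applied to the singly twisted bracket. Next I would check that $\beta$ is a semi-morphism of $(L,\langle\cdot,\dots,\cdot\rangle,\varepsilon,\alpha)$: the commutation $\beta\alpha=\alpha\beta$ is hypothesis (1), and for the multiplicativity condition it suffices, by the Remark following the definition of semi-morphism (the insertion slot of $\beta$ is immaterial up to $\varepsilon$-skew-symmetry), to verify it with $\beta$ placed in slot $j$. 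The display above shows $\beta\langle x_1,\dots,x_n\rangle=[x_1,\dots,\beta(x_i),\dots,\beta(x_j),\dots,x_n]=\langle x_1,\dots,\beta(x_j),\dots,x_n\rangle$, which is precisely the semi-morphism identity for $\langle\cdot,\dots,\cdot\rangle$.

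Finally, applying Theorem~\ref{BakayokoSilvestrov:tch} to the $n$-Hom-Lie color algebra $(L,\langle\cdot,\dots,\cdot\rangle,\varepsilon,\alpha)$ and the semi-morphism $\beta$, twisting at slot $j$, produces an $n$-Hom-Lie color algebra whose bracket is $\langle x_1,\dots,\beta(x_j),\dots,x_n\rangle=\{x_1,\dots,x_n\}$ and whose twisting map is still $\alpha$; this is the assertion.

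I expect the principal obstacle to be conceptual rather than computational. One must resist concluding from $\{\cdot,\dots,\cdot\}=\beta\langle\cdot,\dots,\cdot\rangle$ that the endomorphism (Yau) twist $L_\beta=(L,\beta[\cdot,\dots,\cdot],\varepsilon,\beta\alpha)$ does the job: that construction would output the wrong twisting map $\beta\alpha$ and, worse, would require $\beta$ to be a full endomorphism rather than merely an averaging operator. The delicate point is therefore to realize $\{\cdot,\dots,\cdot\}$ as a \emph{semi-morphism} twist, which is what keeps the twisting map equal to $\alpha$, and to keep track of the fact that the two twisted slots satisfy $i\neq j$, since that distinctness is exactly what makes the averaging property (2) applicable.
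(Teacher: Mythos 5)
Your proposal is correct, but it takes a genuinely different route from the paper. The paper proves the theorem head-on: it expands $\{\alpha(x_1),\dots,\alpha(x_{n-1}),\{y_1,\dots,y_n\}\}$ via the Hom--Nambu--Filippov identity of the original bracket, splits the resulting sum according to whether the inner bracket falls before slot $k$, at slot $k$, between $k$ and $l$, at slot $l$, or after $l$, and then uses the averaging identity to absorb the stray factors of $\beta$ term by term --- a computation occupying roughly two pages. You instead factor the doubly twisted bracket through two constructions already established: the single-twist averaging theorem gives that $\langle x_1,\dots,x_n\rangle=[x_1,\dots,\beta(x_i),\dots,x_n]$ is an $n$-Hom-Lie color bracket with the same $\alpha$, and the averaging axiom (2), read as $\beta\langle x_1,\dots,x_n\rangle=\{x_1,\dots,x_n\}=\langle x_1,\dots,\beta(x_j),\dots,x_n\rangle$, exhibits $\beta$ as a semi-morphism of $(L,\langle\cdot,\dots,\cdot\rangle,\varepsilon,\alpha)$, so Theorem~\ref{BakayokoSilvestrov:tch} finishes the job. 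This is shorter, more conceptual, and isolates the real content of axiom (2) --- namely that an averaging operator becomes a semi-morphism of the once-twisted algebra --- while correctly preserving the twisting map $\alpha$ (your remark distinguishing this from the Yau-type twist $L_\beta$ is well taken). Two caveats on what your reduction leans on: first, transferring the semi-morphism identity from slot $j$ to slot $1$ uses the $\varepsilon$-skew-symmetry of the intermediate bracket $\langle\cdot,\dots,\cdot\rangle$, which the paper asserts implicitly but never verifies for any of its twisted brackets (the same gap is present in the paper's own direct proof, which only checks the Hom--Nambu--Filippov identity, so you are at parity with the source); second, your argument inherits the single-twist averaging theorem, whose proof the paper only indicates as ``similar to Theorem~\ref{BakayokoSilvestrov:tch}'', so strictly speaking your chain of reductions rests on a proof that is itself only sketched.
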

\begin{proof} The proof is obtained as follows:
 \begin{eqnarray}
&&\{\alpha(x_1), \dots, \alpha(x_{n-1}), \{y_1, \dots, y_n\}\}=\nonumber\\
&&=[\alpha(x_1), \dots, \beta\alpha(x_i), \dots, \beta\alpha(x_j), \dots, \alpha(x_{n-1}),
[y_1, \dots, \beta(y_k), \dots, \beta(y_l), \dots, y_n]]\nonumber\\
&&=\sum_{m<k}\varepsilon(X, Y_m)[\alpha(y_1), \dots, \alpha(y_{m-1}),
[x_1, \dots, \beta(x_i), \dots, \beta(x_j), \dots, x_{n-1}, y_m], \nonumber \\
&& \hspace{4.5cm}
\alpha(y_{m+1}), \dots, \alpha\beta(y_k), \dots, \alpha\beta(y_l), \dots, \alpha(y_n)]\nonumber\\
&& \qquad+\varepsilon(X, Y_k)[\alpha(y_1), \dots, \alpha(y_{k-1}),
[x_1, \dots, \beta(x_i), \dots, \beta(x_j), \dots, x_{n-1}, \beta(y_k)],\nonumber\\
&&\hspace{5cm} \alpha(y_{k+1}), \dots, \alpha\beta(y_l), \dots, \alpha(y_n)]\nonumber\\
&& \qquad +\sum_{k<m<l}\varepsilon(X, Y_m)[\alpha(y_1), \dots, \alpha\beta(y_{k}), \dots, \alpha(y_{l-1}),
\nonumber \\
&& \hspace{1cm}
[x_1, \dots, \beta(x_i), \dots, \beta(x_j), \dots, x_{n-1}, y_m], \alpha(y_{m+1}), \dots, \alpha\beta(y_{l}), \dots, \alpha(y_n)]\nonumber\\
&&\qquad +\varepsilon(X, Y_l)[\alpha(y_1), \dots, \alpha\beta(y_{k}),\dots, \alpha(y_{l-1}),
\nonumber\\
&&\hspace{2.5cm}
[x_1, \dots, \beta(x_i), \dots, \beta(x_j), \dots, x_{n-1}, \beta(y_l)],\alpha(y_{l+1}), \dots, \alpha(y_n)]\nonumber\\
&&\qquad +\sum_{m>l}\varepsilon(X, Y_m)[\alpha(y_1), \dots, \alpha\beta(y_{k}), \dots, \alpha\beta(y_{l}), \dots, \alpha(y_{m-1}),
\nonumber\\
&&\hspace{2.5cm}
[x_1, \dots, \beta(x_i), \dots, \beta(x_j), \dots, x_{n-1}, y_m],\alpha(y_{m+1}), \dots,  \alpha(y_n)]\nonumber\\
&&=\sum_{m<k}\varepsilon(X, Y_m)\{\alpha(y_1), \dots, \alpha(y_{m-1}),
\{x_1, \dots, x_i, \dots, x_j, \dots, x_{n-1}, y_m\},
\nonumber\\
&& \hspace{5cm}
\alpha(y_{m+1}),\dots, \alpha(y_k), \dots, \alpha(y_l), \dots, \alpha(y_n)\}\nonumber\\
&&\qquad +\varepsilon(X, Y_k)[\alpha(y_1), \dots, \alpha(y_{k-1}),
\beta([x_1, \dots, \beta(x_i), \dots, \beta(x_j), \dots, x_{n-1}, y_k]),\nonumber\\
&&\hspace{6cm} \alpha(y_{k+1}), \dots, \beta\alpha(y_l), \dots, \alpha(y_n)]\nonumber\\
&&\qquad +\sum_{k<m<l}\varepsilon(X, Y_m)\{\alpha(y_1), \dots, \alpha(y_{k}), \dots, \alpha(y_{m-1}),
\nonumber\\
&&\hspace{2cm}
\{x_1, \dots, x_i, \dots, x_j, \dots, x_{n-1}, y_m\},\alpha(y_{m+1}), \dots, \alpha(y_{l}), \dots, \alpha(y_n)]\nonumber\\
&&\qquad +\varepsilon(X, Y_l)[\alpha(y_1), \dots, \beta\alpha(y_{k}),\dots, \alpha(y_{l-1}),
\nonumber\\
&&\hspace{2.5cm}
\beta([x_1, \dots, \beta(x_i), \dots, \beta(x_j), \dots, x_{n-1}, y_l]), \alpha(y_{l+1}), \dots, \alpha(y_n)]\nonumber\\
&& \qquad +\sum_{m>l}\varepsilon(X, Y_m)\{\alpha(y_1), \dots, \alpha(y_{k}), \dots, \alpha(y_{l}), \dots, \alpha(y_{m-1}),
\nonumber\\
&&\hspace{3cm}
\{x_1, \dots, x_i, \dots, x_j, \dots, x_{n-1}, y_m\},\alpha(y_{m+1}), \dots,  \alpha(y_n)\}\nonumber
\end{eqnarray}
\begin{eqnarray}
&&=\sum_{m<k}\varepsilon(X, Y_m)\{\alpha(y_1), \dots, \alpha(y_{m-1}),
\{x_1, \dots, x_i, \dots, x_j, \dots, x_{n-1}, y_m\},
\nonumber\\
&&
\hspace{5cm}
\alpha(y_{m+1}), \dots,\alpha(y_k), \dots, \alpha(y_l), \dots, \alpha(y_n)\}\nonumber\\
&& \qquad +\varepsilon(X, Y_k)\{\alpha(y_1), \dots, \alpha(y_{k-1}),
\{x_1, \dots, x_i, \dots, x_j, \dots, x_{n-1}, y_k\},\nonumber\\
&&\hspace{6cm} \alpha(y_{k+1}), \dots, \alpha(y_l), \dots, \alpha(y_n)\}\nonumber\\
&& \qquad +\sum_{k<m<l}\varepsilon(X, Y_m)\{\alpha(y_1), \dots, \alpha(y_{k}), \dots, \alpha(y_{m-1}),
\nonumber\\
&&\hspace{2cm}
\{x_1, \dots, x_i, \dots, x_j, \dots, x_{n-1}, y_m\},\alpha(y_{m+1}), \dots, \alpha(y_{l}), \dots, \alpha(y_n)\}\nonumber\\
&&+\varepsilon(X, Y_l)\{\alpha(y_1), \dots, \alpha(y_{k}),\dots, \alpha(y_{l-1}),
\{x_1, \dots, x_i, \dots, x_j, \dots, x_{n-1}, y_l\}),\nonumber\\
&&\hspace{5cm} \alpha(y_{l+1}), \dots, \alpha(y_n)\}\nonumber\\
&&+\sum_{m>l}\varepsilon(X, Y_m)\{\alpha(y_1), \dots, \alpha(y_{k}), \dots, \alpha(y_{l}), \dots, \alpha(y_{m-1}),
\nonumber\\
&&\hspace{2cm}
\{x_1, \dots, x_i, \dots, x_j, \dots, x_{n-1}, y_m\},\alpha(y_{m+1}), \dots,  \alpha(y_n)\}\nonumber.
 \end{eqnarray}
This finishes the proof.
 \end{proof}


\section{Hom-modules over $n$-Hom-Lie color algebras}
\label{BakayokoSilvestrov:sec:modulesnhomcoloralg}
In this section we consider Hom-modules over $n$-Hom-Lie color algebras.

\begin{definition}
Let $G$ be an abelian group. A Hom-module \index{Hom-module} is a pair $(M,\alpha_M)$ in which $M$ is a $G$-graded linear space and $\alpha_M: M\longrightarrow M$ is an even linear map.
\end{definition}

\begin{definition}
  Let $(L, [\cdot, \dots, \cdot], \varepsilon, \alpha)$ be an $n$-Hom-Lie color algebra and $(M, \alpha_M)$ a Hom-module. The Hom-module $(M, \alpha_M)$
 is called an  $n$-Hom-Lie module \index{Hom-module!n-Hom-Lie} over $L$ if there are  $n$ polylinear maps:
$$\omega_i : L\otimes \dots L\otimes\underbrace{M}_{i}\otimes L\otimes\dots\otimes L\rightarrow M, \quad i=1, 2, \dots, n$$
such that, for any $x_i, y_i\in\mathcal{H}(L)$ and $m\in\mathcal{H}(M)$,
\begin{enumerate}
 \item[a)]$\omega_i(x_1, \dots, x_{i-1}, m, x_{i+1}, \dots, x_n)$ is a $\varepsilon$-skew-symmetric by all $x$-type arguments.
 \item[b)]$\omega_i(x_1, \dots, x_{i-1}, m, x_{i+1}, \dots, x_n)=-\varepsilon(m, x_{i+1})\omega_i(x_1, \dots, x_{i-1}, x_{i+1}, m\dots, x_n)$ \\
for $i=1, 2, \dots, n-1$.
 \item[c)]
$\omega_n(\alpha(x_1), \dots, \alpha(x_{n-1}), \omega_n(y_1, \dots, y_{n-1}, m))=$
\begin{eqnarray}
&&=\sum_{i=1}^{n-1}\varepsilon(X, Y_i)\omega_n(\alpha(y_1), \dots, \alpha(y_{i-1}),
[x_1, \dots, x_{n-1}, y_i], \alpha(y_{i+1}), \dots, \alpha_M(m))\nonumber\\
&&\qquad+\varepsilon(X, Y_n)\omega_n(\alpha(y_1), \dots, \alpha(y_{n-1}), \omega_n(x_1, \dots, x_{n-1}, m)),\nonumber
\end{eqnarray}
where $x_i, y_j\in\mathcal{H}(L)$, $X=\sum_{i=1}^{n-1}x_i$,  $Y_i=\sum_{j=1}^iy_{j-1}, y_0=e$ and $m\in\mathcal{H}(M)$.
\item[d)]
$\omega_{n-1}(\alpha(x_1), \dots, \alpha(x_{n-2}), \alpha_M(m), [y_1, \dots, y_{n}])=$
\begin{eqnarray}
&&=\sum_{i=1}^{n}\varepsilon(X, Y_i)\omega_i(\alpha(y_1), \dots, \alpha(y_{i-1}),
\omega_{n-1}(x_1, \dots, x_{n-2}, m, y_i),
\nonumber \\
&&
\hspace{5cm}
\alpha(y_{i+1}), \dots, \alpha(y_n)]\nonumber,
\end{eqnarray}
where $x_i, y_j\in\mathcal{H}(L)$, $X=\sum_{i=1}^{n-2}x_i+m$,  $Y_i=\sum_{j=1}^iy_{j-1}, y_0=e$ and $m\in\mathcal{H}(M)$.
\end{enumerate}
\end{definition}

\begin{example}
 Any $n$-Hom-Lie color algebra $(L, [\cdot, \dots, \cdot], \varepsilon, \alpha)$ is an $n$-Hom-Lie module over itself by taking $M=L$, $\alpha_M=\alpha$
and $\omega_i(\cdot,\dots, \cdot)=[\cdot, \dots, \cdot]$.
\end{example}

\begin{theorem}
  Let $(L, [\cdot, \dots, \cdot], \varepsilon, \alpha)$ be an $n$-Hom-Lie color algebra, and let $(M, \alpha_M, \omega_i)$ be an $n$-Hom-Lie color module
and $\beta : L\rightarrow L$ be an endomorphism. Define
$$\tilde\omega_i=(\beta, \dots, \beta, \underbrace{id}_i, \beta, \dots, \beta), i=1, 2, \dots, n.$$
Then $(M, \alpha_M, \tilde\omega_i)$ is an $n$-Hom-Lie color module.
\end{theorem}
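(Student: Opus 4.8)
The plan is to verify directly that the twisted triple $(M,\alpha_M,\tilde\omega_i)$ satisfies conditions a)--d) of the definition of an $n$-Hom-Lie color module over the \emph{same} algebra $(L,[\cdot,\dots,\cdot],\varepsilon,\alpha)$. Unwinding the shorthand, $\tilde\omega_i$ is the map $\tilde\omega_i(z_1,\dots,z_{i-1},m,z_{i+1},\dots,z_n)=\omega_i(\beta(z_1),\dots,\beta(z_{i-1}),m,\beta(z_{i+1}),\dots,\beta(z_n))$, i.e. $\beta$ is inserted in every $L$-slot and the identity in the single $M$-slot. Throughout I will use three properties of the endomorphism $\beta$: it is even (degree-preserving), it commutes with the twisting map, $\beta\alpha=\alpha\beta$, and it is a homomorphism for the bracket, $\beta[u_1,\dots,u_n]=[\beta(u_1),\dots,\beta(u_n)]$.

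Conditions a) and b) are immediate. Since $\beta$ is even, inserting $\beta$ in the $L$-slots changes neither the degrees of the arguments nor, therefore, any of the bicharacter factors $\varepsilon(\cdot,\cdot)$; hence the $\varepsilon$-skew-symmetry of $\omega_i$ in its $x$-type arguments and the transposition rule b) are inherited verbatim by $\tilde\omega_i$.

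The substantive content is condition c). First I would unfold the left-hand side using the definition of $\tilde\omega_n$, so that it becomes $\omega_n(\beta\alpha(x_1),\dots,\beta\alpha(x_{n-1}),\omega_n(\beta(y_1),\dots,\beta(y_{n-1}),m))$. Using $\beta\alpha=\alpha\beta$ this equals $\omega_n(\alpha(\beta(x_1)),\dots,\alpha(\beta(x_{n-1})),\omega_n(\beta(y_1),\dots,\beta(y_{n-1}),m))$, which is exactly the left-hand side of the module axiom c) for $\omega_n$ evaluated at the arguments $\beta(x_j)$ and $\beta(y_j)$. Applying that axiom produces a sum whose generic interior bracket is $[\beta(x_1),\dots,\beta(x_{n-1}),\beta(y_i)]$; by the homomorphism property this collapses to $\beta[x_1,\dots,x_{n-1},y_i]$, and reinserting the remaining $\beta\alpha=\alpha\beta$ factors lets me recognise each summand as $\tilde\omega_n$ applied to the $\alpha(y_j)$ and to the untwisted bracket $[x_1,\dots,x_{n-1},y_i]$, together with the final term $\tilde\omega_n(\alpha(y_1),\dots,\alpha(y_{n-1}),\tilde\omega_n(x_1,\dots,x_{n-1},m))$. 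Since $\beta$ is even, the degree data $X,Y_i$ computed from the $\beta$-images coincide with those of the original arguments, so the coefficients $\varepsilon(X,Y_i)$ are unchanged and the right-hand side is precisely condition c) for $\tilde\omega_n$.

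Condition d) is handled by the same mechanism: expand $\tilde\omega_{n-1}$ and $\tilde\omega_i$, commute every $\beta$ past $\alpha$, fold the homomorphism property into the bracket $[y_1,\dots,y_n]$, and repackage. The only real obstacle---and the step to carry out with care---is the bookkeeping of the single $M$-slot as it moves through position $n-1$ (inside $\tilde\omega_{n-1}$) and position $i$ (inside the outer $\tilde\omega_i$), while confirming that $\beta$ is applied to exactly the $L$-type arguments on both sides. Once the even-ness of $\beta$ guarantees that the $\varepsilon$-factors agree and $\beta\alpha=\alpha\beta$ aligns the twists, the identity reduces termwise to axiom d) for the original maps $\omega_i$, which completes the proof.
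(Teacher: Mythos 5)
Your proposal is correct and follows essentially the same route as the paper's proof: unfold $\tilde\omega_n$, use $\beta\alpha=\alpha\beta$ to reduce the left-hand side to axiom c) for $\omega_n$ at the $\beta$-twisted arguments, fold the homomorphism property of $\beta$ into the inner bracket, and repackage each summand as $\tilde\omega_n$, with a), b) immediate and d) handled analogously. The only difference is that you make explicit the (correct) observation that evenness of $\beta$ leaves the $\varepsilon$-coefficients unchanged, which the paper uses tacitly.
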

\begin{proof}
 The items $a)$ and $b)$ are obvious. So we only prove $c)$, item $d)$ being proved similarly.
\begin{eqnarray}
&&\tilde\omega_n(\alpha(x_1), \dots, \alpha(x_{n-1}), \tilde\omega_n(y_1, \dots, y_{n-1}, m))=\nonumber\\
&&=\omega_n(\beta\alpha(x_1), \dots, \beta\alpha(x_{n-1}), \omega_n(\beta(y_1), \dots, \beta(y_{n-1}), m))\nonumber\\
&&=\omega_n(\alpha\beta(x_1), \dots, \alpha\beta(x_{n-1}), \omega_n(\beta(y_1), \dots, \beta(y_{n-1}), m))\nonumber\\
&&=\sum_{i=1}^{n-1}\varepsilon(X, Y_i)\omega_n(\alpha\beta(y_1), \dots, \alpha\beta(y_{i-1}),
[\beta(x_1), \dots, \beta(x_{n-1}), \beta(y_i)], \nonumber\\
&& \hspace{7cm} \alpha\beta(y_{i+1}), \dots, \alpha_M(m)) \nonumber\\
&& \qquad +\varepsilon(X, Y_n)\omega_n(\alpha\beta(y_1), \dots, \alpha\beta(y_{n-1}), \omega_n(\beta(x_1), \dots, \beta(x_{n-1}), m))\nonumber\\
&&=\sum_{i=1}^{n-1}\varepsilon(X, Y_i)\omega_n(\beta\alpha(y_1), \dots, \beta\alpha(y_{i-1}),
\beta([x_1, \dots, x_{n-1}, y_i]),
\nonumber\\
&& \hspace{7cm}
\beta\alpha(y_{i+1}), \dots, \alpha_M(m))\nonumber\\
&&\qquad+\varepsilon(X, Y_n)\omega_n(\beta\alpha(y_1), \dots, \beta\alpha(y_{n-1}), \omega_n(\beta(x_1), \dots, \beta(x_{n-1}), m)),\nonumber\\
&&=\sum_{i=1}^{n-1}\varepsilon(X, Y_i)\omega_n(\beta\otimes\dots\otimes\beta\otimes id)(\alpha(y_1), \dots, \alpha(y_{i-1}),
[x_1, \dots, x_{n-1}, y_i],
\nonumber\\
&& \hspace{7.5cm}
\alpha(y_{i+1}), \dots, \alpha_M(m))\nonumber\\
&& \qquad +\varepsilon(X, Y_n)\omega_n(\beta\otimes\dots\otimes\beta\otimes id)\Big(id\otimes \dots\otimes id\otimes
 \omega_n(\beta\otimes\dots\otimes\beta\otimes id)\Big)
 \nonumber\\
&& \hspace{5cm}
(\alpha(y_1), \dots, \alpha(y_{n-1}), x_1, \dots, x_{n-1}, m)\nonumber\\
&&=\sum_{i=1}^{n-1}\varepsilon(X, Y_i)\tilde\omega_n(\alpha(y_1), \dots, \alpha(y_{i-1}),
[x_1, \dots, x_{n-1}, y_i], \alpha(y_{i+1}), \dots, \alpha_M(m))\nonumber\\
&&\qquad+\varepsilon(X, Y_n)\tilde\omega_n(\alpha(y_1), \dots, \alpha(y_{n-1}),
 \tilde\omega_n(x_1, \dots, x_{n-1}, m))\nonumber.
\end{eqnarray}
This ends the proof.
 \end{proof}
\begin{corollary}
 Let $(L, [\cdot, \dots, \cdot], \varepsilon, \alpha)$ be an $n$-Hom-Lie color algebra and $\beta : L\rightarrow L$ be an endomorphism.
Then $(L, \{\cdot, \dots, \cdot\}_i, \alpha)$, with
$$\{\cdot, \dots, \cdot\}_i=[\beta, \dots, \beta,\underbrace{id}_i, \beta, \dots, \beta], \quad i=1, 2, \dots, n,$$
is an $n$-Hom-Lie color module.
\end{corollary}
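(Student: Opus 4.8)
The plan is to recognize this Corollary as an immediate specialization of the preceding Theorem, applied to the algebra $L$ regarded as a module over itself. First I would invoke the Example stated just before the Theorem, which records that every $n$-Hom-Lie color algebra $(L, [\cdot, \dots, \cdot], \varepsilon, \alpha)$ is an $n$-Hom-Lie color module over itself under the choices $M = L$, $\alpha_M = \alpha$ and $\omega_i(\cdot, \dots, \cdot) = [\cdot, \dots, \cdot]$ for every $i = 1, \dots, n$. This exhibits a concrete module $(M, \alpha_M, \omega_i)$ to which the Theorem is directly applicable.

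Next I would feed the given endomorphism $\beta : L \to L$ into the Theorem. The Theorem manufactures the twisted operations $\tilde\omega_i = (\beta, \dots, \beta, \underbrace{id}_i, \beta, \dots, \beta)$ for $i = 1, \dots, n$ and guarantees that $(M, \alpha_M, \tilde\omega_i)$ is once more an $n$-Hom-Lie color module. Substituting $M = L$, $\alpha_M = \alpha$ and $\omega_i = [\cdot, \dots, \cdot]$, the twisted operation reads
$$\tilde\omega_i(x_1, \dots, x_n) = [\beta(x_1), \dots, \beta(x_{i-1}), x_i, \beta(x_{i+1}), \dots, \beta(x_n)],$$
which is by definition the operation $\{x_1, \dots, x_n\}_i$ appearing in the statement. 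Therefore $(L, \{\cdot, \dots, \cdot\}_i, \alpha)$ is an $n$-Hom-Lie color module, as asserted.

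I do not expect any genuine obstacle: the whole argument is a matter of matching notation, identifying the general module construction of the Theorem with the self-module structure supplied by the Example. The only step meriting a second glance is the tacit use of the Example, namely that conditions (a)--(d) defining an $n$-Hom-Lie color module collapse exactly onto the defining axioms of the algebra once $M = L$ and each $\omega_i$ is the bracket; but since this is already asserted in the Example, it may be cited without further computation. Hence the Corollary follows with no independent verification.
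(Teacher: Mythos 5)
Your proposal is correct and matches the paper's intent exactly: the paper states this corollary without proof precisely because it is the specialization of the preceding Theorem to the self-module structure $(M,\alpha_M,\omega_i)=(L,\alpha,[\cdot,\dots,\cdot])$ recorded in the Example, which is the argument you give. The only point worth noting is that you correctly unpack the paper's compressed notation $\tilde\omega_i=(\beta,\dots,\beta,\underbrace{id}_i,\beta,\dots,\beta)$ as precomposition of the bracket with $\beta$ in all slots except the $i$-th, so the identification $\tilde\omega_i=\{\cdot,\dots,\cdot\}_i$ is immediate.
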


\begin{corollary}
 Let $(L, [\cdot, \dots, \cdot], \varepsilon, \alpha)$ be a multiplicative $n$-Hom-Lie color algebra.
Then, for nay $k\geq 1$, $(L, \{\cdot, \dots, \cdot\}_i^k, \alpha)$ is an $n$-Hom-Lie color module, with
$$\{\cdot, \dots, \cdot\}_i^{k}=[\alpha^k, \dots, \alpha^k,\underbrace{id}_i, \alpha^k, \dots, \alpha^k], \quad i=1, 2, \dots, n.$$
\end{corollary}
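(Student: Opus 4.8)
The plan is to obtain this statement as an immediate specialization of the preceding Corollary by taking $\beta := \alpha^k$. Recall first (from the Example following the definition of an $n$-Hom-Lie color module) that any $n$-Hom-Lie color algebra $L$ is an $n$-Hom-Lie color module over itself via $M = L$, $\alpha_M = \alpha$ and $\omega_i = [\cdot, \dots, \cdot]$. Hence $L$ is already equipped with all the module data to which the preceding Corollary applies.

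The only ingredient requiring verification is that $\alpha^k$ is a legitimate even endomorphism of $L$ for every $k \geq 1$. Since $L$ is \emph{multiplicative}, $\alpha$ is by definition an endomorphism, that is $\alpha[x_1, \dots, x_n] = [\alpha(x_1), \dots, \alpha(x_n)]$ for all homogeneous $x_i$. The composite of two endomorphisms is again an endomorphism, and the composite of two even maps is even; iterating these two facts $k$ times shows that $\alpha^k$ is an even endomorphism of $L$.

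With this in hand I would simply invoke the preceding Corollary with the endomorphism $\beta := \alpha^k$. That Corollary states that for any endomorphism $\beta$ the family
$$\{\cdot, \dots, \cdot\}_i = [\beta, \dots, \beta, \underbrace{id}_{i}, \beta, \dots, \beta], \quad i = 1, 2, \dots, n,$$
makes $(L, \{\cdot, \dots, \cdot\}_i, \alpha)$ an $n$-Hom-Lie color module. Substituting $\beta = \alpha^k$ turns these maps into precisely $\{\cdot, \dots, \cdot\}_i^{k} = [\alpha^k, \dots, \alpha^k, \underbrace{id}_{i}, \alpha^k, \dots, \alpha^k]$, which is exactly the structure appearing in the statement, so the conclusion follows.

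The main obstacle — in truth the only delicate point — is the reliance on multiplicativity to guarantee that $\alpha^k$ is an endomorphism: if $\alpha$ did not commute with the bracket then $\alpha^k$ would fail to be an endomorphism and the hypothesis of the preceding Corollary could not be met. Once this is granted, no further computation is needed, since all the defining identities (a)--(d) of an $n$-Hom-Lie color module have already been checked in the Theorem and its first Corollary for an arbitrary endomorphism $\beta$.
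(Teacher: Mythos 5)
Your proposal is correct and is precisely the argument the paper intends: the corollary is stated without proof as an immediate specialization of the preceding corollary (itself the Theorem applied to the adjoint module), obtained by setting $\beta=\alpha^k$, and you rightly identify that the only point needing verification is that multiplicativity makes $\alpha^k$ an even endomorphism commuting with $\alpha$.
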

We end this section by giving some results for trivial gradation i.e. $G=\{e\}$.

\begin{proposition}
 Let $(M, \alpha_M, \omega_i)$ be a module over the $n$-Hom-Lie algebra \\ $(L, [\cdot, \dots, \cdot], \alpha_L)$. Consider the direct sum of linear spaces
$A=L\oplus M$. Let' us define on $A$ the bracket
\begin{itemize}
 \item $\{x_1, \dots, x_n\}=[x_1, \dots, x_n]$,
\item $\{x_1, \dots, x_{i-1}, \dots, m, x_{i+1}, \dots, x_n\}=\omega_i(x_1, \dots, x_{i-1}, \dots, m, x_{i+1}, \dots, x_n)$,
\item $\{x_1, \dots, x_{i}, \dots, x_{j}, \dots, x_n\}=0$, whenever $x_i, x_j\in M$.
\end{itemize}
Then $(A, \alpha_A=\alpha_L+\alpha_M)$ is an $n$-Hom-Lie algebra.
\end{proposition}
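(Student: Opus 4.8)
The plan is to verify directly the two defining identities of an $n$-Hom-Lie algebra (the trivial-gradation case $G=\{e\}$, where every bicharacter value is $1$ and $\alpha_A$ is automatically even) for the bracket $\{\cdot,\dots,\cdot\}$ on $A=L\oplus M$ and the twisting map $\alpha_A=\alpha_L+\alpha_M$. Since $\{\cdot,\dots,\cdot\}$ is $n$-linear, it suffices to test both identities on arguments each of which lies entirely in $L$ or entirely in $M$, organizing the verification according to the number of $M$-type arguments present. I will use throughout that $\alpha_A$ preserves the decomposition $A=L\oplus M$, so it sends $L$-type (resp.\ $M$-type) elements to $L$-type (resp.\ $M$-type) elements.

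First I would establish $\varepsilon$-skew-symmetry. On arguments with no $M$-entry the bracket is $[\cdot,\dots,\cdot]$, which is skew by hypothesis; on arguments with two or more $M$-entries the bracket is identically zero by the third defining clause, hence trivially skew; on arguments with exactly one $M$-entry the bracket is some $\omega_i$, and full skew-symmetry follows by combining axiom a) (skew-symmetry in the $L$-type slots) with axiom b) (the sign rule for transposing the $M$-entry past an adjacent $L$-entry), since such transpositions generate the symmetric group.

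Next, for the twisted Filippov identity
\[
\{\alpha_A(z_1),\dots,\alpha_A(z_{n-1}),\{w_1,\dots,w_n\}\}
=\sum_{i=1}^{n}\{\alpha_A(w_1),\dots,\{z_1,\dots,z_{n-1},w_i\},\dots,\alpha_A(w_n)\},
\]
with $\{z_1,\dots,z_{n-1},w_i\}$ in the $i$-th slot, I would count the $M$-type entries among the $2n-1$ slots. If there are none, both sides reduce to the $n$-Hom-Lie identity of $L$. If there is exactly one, I would first note that both sides are covariant, transforming by the same sign, under permutations of the $z$'s among themselves or of the $w$'s among themselves (using the skew-symmetry just proved on the inner brackets); hence I may assume the lone $M$-entry is $w_n$ or $z_{n-1}$. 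In the first case the identity becomes precisely module axiom c) with $x_\bullet=z_\bullet$, $y_\bullet=w_\bullet$, $m=w_n$ — the summand $i=n$ matching the final term of c) and the summands $i<n$ matching its sum. In the second case it becomes precisely module axiom d) with $m=z_{n-1}$, the inner bracket producing $\omega_{n-1}(z_1,\dots,z_{n-2},m,w_i)$ and the outer producing $\omega_i$. I expect the main obstacle to be exactly this bookkeeping: one must check that, after the skew-symmetry reduction, the index patterns of the mixed brackets match the maps $\omega_i,\omega_{n-1},\omega_n$ of c) and d) on the nose, including the separate treatment of the summand in which the $M$-entry is the very slot bracketed with the $z$'s.

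Finally, the case of two or more $M$-entries is handled by a purely combinatorial vanishing argument. Writing $p$, $q$ for the number of $M$-type entries among the $z$'s and the $w$'s, with $p+q\geq 2$, the left-hand side has inner bracket carrying $q$ such entries and outer bracket carrying $p+\min(q,1)$; checking $q\geq 2$, $q=1$, $q=0$ shows the outer bracket always has at least two $M$-type arguments, so the left-hand side is zero. On the right-hand side the $i$-th summand has inner bracket with $p+[w_i\in M]$ $M$-type arguments and outer bracket with $(q-[w_i\in M])+[\,p+[w_i\in M]=1\,]$ of them; running through $p\geq 2$, then $p=1$, then $p=0$ with $q\geq 2$, shows each summand has either a vanishing inner bracket or an outer bracket with at least two $M$-type arguments. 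Hence both sides vanish, the identity holds in every case, and $(A,\alpha_A)$ is an $n$-Hom-Lie algebra.
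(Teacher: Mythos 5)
Your proposal is correct, and in fact it supplies an argument the paper does not: this proposition is stated in the paper without any proof, so there is no authorial argument to compare against. Your verification is complete and the structure is the natural one. The three points that actually need care are all handled properly: (i) skew-symmetry in the one-$M$-entry case does follow from axioms a) and b) because adjacent transpositions generate $S_n$ and b) is exactly the rule for moving the $M$-slot past a neighbouring $L$-slot (turning $\omega_i$ into $\omega_{i+1}$); (ii) the reduction of the one-$M$-entry case of the Hom--Filippov identity to the normalized positions $w_n\in M$ and $z_{n-1}\in M$ is legitimate since both sides of the identity are multilinear and alternating separately in the $z$'s and in the $w$'s once skew-symmetry of $\{\cdot,\dots,\cdot\}$ is established, and after normalization the two sides are verbatim the module axioms c) and d), with the $i=n$ summand matching the final term of c) and the slot indices forcing $\omega_n$ on the left and $\omega_n$, $\omega_{n-1}$, $\omega_i$ in the right places; (iii) your count for $p+q\geq 2$ is right, the only case requiring a second look being $p+[w_i\in M]=1$ on the right-hand side, where the constraint $p+q\geq 2$ indeed forces the outer bracket to retain at least two $M$-type arguments. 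One could shorten (iii) by observing that the total number of $M$-type entries is conserved in passing from the $2n-1$ outer arguments to the nested expression, but your explicit case check is equally valid.
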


\begin{proposition}
 Let $(M_1, \alpha_M^1, \omega_i^1)$ and $(M_2, \alpha_M^2, \omega_i^2)$ be two modules over the $n$-Hom-Lie algebra $(L, [\cdot, \dots, \cdot], \alpha)$.
Then  $(M, \alpha_M, \omega_i)$ is an $n$-Hom-Lie module with
$$M=M_1\oplus M_2, \quad \alpha_M=\alpha_M^1\oplus \alpha_M^2\quad\mbox{and}\quad\omega_i=\omega_i^1\oplus\omega_i^2.$$
\end{proposition}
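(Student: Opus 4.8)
The plan is to verify directly the four defining axioms a)--d) of an $n$-Hom-Lie module for the triple $(M,\alpha_M,\omega_i)$, exploiting that the decomposition $M=M_1\oplus M_2$ is respected by every map involved. The key structural observation is that the image of $\omega_i^j$ is contained in $M_j$ and that $\alpha_M^j(M_j)\subseteq M_j$, so the two summands never interact: any $m\in\mathcal{H}(M)$ writes uniquely as $m=m_1+m_2$ with $m_j\in M_j$, and by definition $\omega_i(\dots,m,\dots)=\omega_i^1(\dots,m_1,\dots)+\omega_i^2(\dots,m_2,\dots)$ together with $\alpha_M(m)=\alpha_M^1(m_1)+\alpha_M^2(m_2)$. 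Since the statement concerns an $n$-Hom-Lie algebra with trivial gradation $G=\{e\}$, all bicharacter factors $\varepsilon(X,Y_i)$ equal $1$ and the module identities take their non-color form; the argument is unaffected if one retains $\varepsilon$.

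First I would dispose of axioms a) and b), which are multilinear identities involving a single map $\omega_i$ and arguments from the common algebra $L$. Writing $m=m_1+m_2$ and using linearity, each identity for $\omega_i$ splits as the sum of the corresponding identity for $\omega_i^1$ applied to $m_1$ and for $\omega_i^2$ applied to $m_2$, both of which hold by hypothesis; hence the $\varepsilon$-skew-symmetry in the $x$-arguments and the transposition rule relating $m$ and $x_{i+1}$ pass to $\omega_i$ coordinatewise.

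The substantive part is axioms c) and d), which involve nested applications of the module maps together with $\alpha_M$ and the bracket of $L$. The main thing to track carefully is where the inner output lands: in c), the inner $\omega_n(y_1,\dots,y_{n-1},m)$ equals $\omega_n^1(y_1,\dots,y_{n-1},m_1)+\omega_n^2(y_1,\dots,y_{n-1},m_2)$, whose summands lie in $M_1$ and $M_2$ respectively, so the outer $\omega_n$ acts on each summand through the corresponding $\omega_n^j$ with no cross terms, since $\omega_n^1$ ignores the $M_2$-component and conversely. As $\alpha_M(m)$ likewise splits and the bracket $[x_1,\dots,x_{n-1},y_i]$ is common to both modules, the whole left-hand side of c) decomposes as the left-hand side of c) for $M_1$ (with $m_1$) plus that for $M_2$ (with $m_2$), and identically for the right-hand side; summing the two instances of c), valid by hypothesis, yields c) for $M$. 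Axiom d) is handled in exactly the same way, the only new point being that each outer map $\omega_i$ again preserves the summands. The sole place anything could go wrong is precisely this invariance of the summands under the nested compositions, and it is immediate from $\omega_i=\omega_i^1\oplus\omega_i^2$ and the output constraints $\omega_i^j(\cdots)\in M_j$, so the verification is entirely routine once the decomposition is in place.
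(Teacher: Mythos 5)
Your argument is correct. Note that the paper states this proposition without any proof at all (as it does for the preceding semidirect-sum proposition), so there is no authorial argument to compare against; your coordinatewise verification is precisely the routine check the authors leave implicit. You also correctly isolate the one point that genuinely needs attention, namely that in axioms c) and d) the nested applications of $\omega_n$ and $\omega_{n-1}$ produce no cross terms because $\omega_i^j$ only sees the $M_j$-component of its module argument and lands back in $M_j$; with that observed, each axiom for $(M,\alpha_M,\omega_i)$ is literally the sum of the corresponding axioms for $(M_1,\alpha_M^1,\omega_i^1)$ and $(M_2,\alpha_M^2,\omega_i^2)$.
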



\section{Generalized derivation of color Hom-algebras and their color Hom-subalgebras}
\label{BakayokoSilvestrov:sec:genderivationsnhomcoloralg}
This section is devoted to generalized derivation of color Hom-algebras and their color Hom-subalgebras.

Here we give a more general definition of derivation, centroid and related objects.
\begin{definition}
For any $k\geq 1$, we call $D\in End(L)$ an $\alpha^k$-derivation \index{Derivation!n-Hom-Lie color algebra} of degree $d$ of the multiplicative $n$-Hom-Lie color algebra
$(L, [\cdot, \dots, \cdot], \varepsilon, \alpha)$ if
\begin{eqnarray}
&& \alpha\circ D=D\circ\alpha, \\
&& D([x_1, \dots, x_n])= \\
&& \sum_{i=1}^n\varepsilon(d, X_{i})[\alpha^k(x_1), \dots, \alpha^k(x_{i-1}), D(x_i), \alpha^k(x_{i+1}), \dots, \alpha^k(x_n)].
\label{BakayokoSilvestrov:der} \nonumber
\end{eqnarray}
\end{definition}

\begin{example}[\cite{ChapBaSilnhomliecolor:IBLaplacehomLiequasibialg}]
 The Laplacian \index{Laplacian!Hom-Lie quasi-bialgebra} of any Hom-Lie quasi-bialgebra $(\mathcal{G}, \mu, \gamma, \phi, \alpha)$ is an \\ $\alpha^2$-derivation of degree $0$
 of $(\Lambda\mathcal{G}, [\cdot, \cdot]^{\mu, \alpha})$, i.e.
\begin{eqnarray}
 L([X, Y ]^{\mu, \alpha} )= [L(X), \alpha^2(Y)]^{\mu, \alpha} + [\alpha^2(X), L(Y)]^{\mu, \alpha},\quad\forall X , Y \in \Lambda\mathcal{G}.
\quad\label{BakayokoSilvestrov:l2}
\end{eqnarray}
\end{example}

\begin{example}
Now, Let $(L, [\cdot, \dots, \cdot], \varepsilon, \alpha)$ be a multiplicative $n$-Hom-Lie color algebra.
For any  homogeneous elements $x_1, \dots, x_{n-1}$ of $L$ and any integer $k\geq 1$, one defines the adjoint action of
  $\Lambda L$ on $L$ by
\begin{multline*}
ad^{[\cdot, \dots, \cdot], \alpha^{k}}_{x_1, \dots, x_{n-1}}([y_1, y_2, \dots, y_n])
:= \\ \displaystyle\sum_{i=1}^k\varepsilon(X, Y_{i})[\alpha^k(y_1), \dots \alpha^k(y_{i-1}),
 [x_1, \dots, x_{n-1}, y_i], \alpha^k(y_{i+1}), \dots, \alpha(y_n)],
\end{multline*}
for any $y_1,\dots,y_n\in\mathcal{H}(L)$. Then $ad^{[\cdot, \dots, \cdot], \alpha^{k}}_{x_1, \dots, x_{n-1}}$ is an $\alpha^k$-derivation of $L$ of
degree $X$. We call $ad^{[\cdot, \dots, \cdot], \alpha^{k}}_{x_1, \dots, x_{n-1}}$ an inner $\alpha^k$-derivation. Denote by
$Inn(L)=\oplus_{k\geq-1}Inn_{\alpha^k}(L)$ the space of all inner $\alpha^k$-derivation.
\end{example}

The following proposition is proved by a straightforward computation.
\begin{proposition}
 Let $D$ be an $\alpha^k$-derivation of an $n$-Hom-Lie color algebra $L$ and $\beta : L\rightarrow L$ an even endomorphism of $L$ such that
$D\circ \beta=\beta\circ D$. Then, for any non-negative integer $s$, $\Delta_s=D\circ\beta^s : L\rightarrow L$ is a $(\beta^s\alpha^k)$-derivation.
\end{proposition}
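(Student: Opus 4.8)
The plan is to check the two defining clauses of a $(\beta^s\alpha^k)$-derivation directly for $\Delta_s=D\circ\beta^s$, after first recording the elementary structural facts. Since $\beta$ is an even endomorphism, every power $\beta^s$ is again an even endomorphism; in particular $\beta^s$ has degree $0$, so $\Delta_s\in End(L)$ carries the same degree $d$ as $D$, and, crucially, $\beta^s$ distributes over the $n$-ary bracket, $\beta^s([x_1,\dots,x_n])=[\beta^s(x_1),\dots,\beta^s(x_n)]$. From $D\circ\beta=\beta\circ D$ one also gets $D\circ\beta^s=\beta^s\circ D$ by an immediate induction, so that $\Delta_s=\beta^s\circ D$ as well.

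For the commutation clause I would compute $\alpha\circ\Delta_s=\alpha\circ D\circ\beta^s=D\circ\alpha\circ\beta^s$, using $\alpha\circ D=D\circ\alpha$, and then slide $\alpha$ past $\beta^s$ to reach $D\circ\beta^s\circ\alpha=\Delta_s\circ\alpha$. For the Leibniz clause I would start from
\[
\Delta_s([x_1,\dots,x_n])=D\bigl(\beta^s([x_1,\dots,x_n])\bigr)=D([\beta^s(x_1),\dots,\beta^s(x_n)]),
\]
and then apply the $\alpha^k$-derivation identity for $D$ to the arguments $\beta^s(x_j)$. Because $\beta^s$ preserves degrees, every bicharacter factor $\varepsilon(d,X_i)$ is unchanged, the distinguished $i$-th slot becomes $D(\beta^s(x_i))=\Delta_s(x_i)$, and each spectator slot becomes $\alpha^k\beta^s(x_j)$. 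Rewriting $\alpha^k\beta^s$ as $\beta^s\alpha^k$ turns the right-hand side into
\[
\sum_{i=1}^n\varepsilon(d,X_i)\,[(\beta^s\alpha^k)(x_1),\dots,(\beta^s\alpha^k)(x_{i-1}),\Delta_s(x_i),(\beta^s\alpha^k)(x_{i+1}),\dots,(\beta^s\alpha^k)(x_n)],
\]
which is exactly the defining identity of a $(\beta^s\alpha^k)$-derivation of degree $d$.

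The computation is otherwise entirely formal, and the single point on which it turns — the main obstacle — is the commutation of $\alpha$ and $\beta$: both the step $\alpha\circ\beta^s=\beta^s\circ\alpha$ in the commutation clause and the replacement of $\alpha^k\beta^s$ by $\beta^s\alpha^k$ in the Leibniz clause require $\alpha\circ\beta=\beta\circ\alpha$. This holds automatically once $\beta$ is an endomorphism of the full Hom-structure, since a morphism commutes with the twisting map by definition; I would therefore record $\alpha\circ\beta=\beta\circ\alpha$ explicitly at the outset. With it in hand, both clauses follow as above and the proof is complete.
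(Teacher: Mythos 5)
Your computation is correct, and it is worth saying up front that the paper offers no argument here at all (the proposition is dismissed with ``proved by a straightforward computation''), so you are supplying a proof rather than reproducing one. Two remarks on the one point you yourself single out as the crux, the commutation $\alpha\circ\beta=\beta\circ\alpha$. First, for the Leibniz clause you do not actually need it: you already record that $D\circ\beta^s=\beta^s\circ D$, and if you expand $\Delta_s([x_1,\dots,x_n])=\beta^s\bigl(D([x_1,\dots,x_n])\bigr)$ in that order, then $\beta^s$ is applied \emph{after} the $\alpha^k$-derivation identity and, being a bracket homomorphism, lands directly as $\beta^s\alpha^k$ on the spectator slots and as $\beta^sD=\Delta_s$ on the distinguished slot --- no interchange of $\alpha^k$ and $\beta^s$ is ever required. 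Second, the clause $\alpha\circ\Delta_s=\Delta_s\circ\alpha$ genuinely does need $\alpha\circ\beta=\beta\circ\alpha$, and your justification that this ``holds automatically'' because an endomorphism commutes with the twisting map is the one interpretive step to be careful about: the paper's definition of a morphism of $n$-Hom-Lie color algebras does include $f\circ\alpha=\alpha\circ f$, which supports your reading, but its definition of ``multiplicative'' glosses ``endomorphism'' as merely a homomorphism with respect to the bracket, with no commutation condition. Under that weaker reading the Leibniz identity survives via the $\beta^s\circ D$ factorization, but the $\alpha$-commutation clause can fail, so the hypothesis $\alpha\circ\beta=\beta\circ\alpha$ should indeed be recorded explicitly, exactly as you propose to do.
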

\begin{corollary}
 If $D$ is an $\alpha^{k}$-derivation of an $n$-Hom-Lie color algebra $L$. Then $\Delta_s$ is an $\alpha^{k+s}$-derivation of $L$.
\end{corollary}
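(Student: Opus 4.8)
The plan is to deduce this corollary directly from the preceding proposition by specializing $\beta=\alpha$, so that the entire argument reduces to verifying that $\alpha$ itself meets the two hypotheses that the proposition imposes on the auxiliary endomorphism $\beta$.

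First I would observe that the notion of $\alpha^k$-derivation is, by its definition, formulated over a \emph{multiplicative} $n$-Hom-Lie color algebra. By definition of multiplicativity, the twisting map $\alpha$ is an endomorphism of $(L,[\cdot,\dots,\cdot])$, and $\alpha$ is even as part of the defining data of any $n$-Hom-Lie color algebra. Hence $\alpha$ is an even endomorphism of $L$, which is exactly the standing requirement the proposition places on $\beta$. Next, the proposition also demands the commutation relation $D\circ\beta=\beta\circ D$; with $\beta=\alpha$ this reads $D\circ\alpha=\alpha\circ D$, which is precisely the first defining identity of an $\alpha^k$-derivation and therefore holds automatically.

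With both hypotheses confirmed, the proposition applies verbatim with $\beta=\alpha$ and asserts that $\Delta_s=D\circ\alpha^s$ is a $(\alpha^s\alpha^k)$-derivation. Since $\alpha^s\alpha^k=\alpha^{k+s}$, this is exactly the statement that $\Delta_s$ is an $\alpha^{k+s}$-derivation, which finishes the proof. There is no genuine obstacle here: the only point that merits explicit attention is the bookkeeping observation that the multiplicativity built into the definition of $\alpha^k$-derivation is what supplies the endomorphism property of $\alpha$ needed to invoke the proposition, whose statement is phrased for a general $n$-Hom-Lie color algebra carrying a separately prescribed endomorphism $\beta$. Once this identification is made, the degree of $\Delta_s$ is unchanged (as $\alpha$ is even), and the conclusion is immediate.
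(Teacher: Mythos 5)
Your proof is correct and follows exactly the route the paper intends: the corollary is the specialization $\beta=\alpha$ of the preceding proposition, with the commutation hypothesis $D\circ\alpha=\alpha\circ D$ supplied by the definition of an $\alpha^k$-derivation and the endomorphism property of $\alpha$ supplied by multiplicativity, so that $\Delta_s=D\circ\alpha^s$ is an $(\alpha^s\alpha^k)=\alpha^{k+s}$-derivation. Your explicit note that multiplicativity is what makes $\alpha$ an admissible choice of $\beta$ is a welcome clarification of a point the paper leaves implicit, but the argument is the same.
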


We denote the set of $\alpha^k$-derivations of the multiplicative $n$-Hom-Lie color algebras $L$ by $Der_{\alpha^k}(L)$.
For any $D\in Der_{\alpha^k}(L)$ and $D'\in Der_{\alpha^k}(L)$, let us define their color commutator $[D, D']$ as usual by
$[D, D']=D\circ D'-\varepsilon(d, d')D'\circ D.$
\begin{lemma}
 For any $D\in Der_{\alpha^k}(L)$ and $D'\in Der_{\alpha^k}(L)$,
$$[D, D']\in Der_{\alpha^{k+s}}(L).$$
\end{lemma}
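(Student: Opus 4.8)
The plan is to verify directly the two defining conditions of an $\alpha^{k+s}$-derivation for the color commutator. Reading the statement with the evident correction $D'\in Der_{\alpha^s}(L)$ forced by the exponent $k+s$ in the conclusion (with $D$ of degree $d$ and $D'$ of degree $d'$, so that $[D,D']$ should emerge as an $\alpha^{k+s}$-derivation of degree $d+d'$), the commutation condition $\alpha\circ[D,D']=[D,D']\circ\alpha$ is immediate: from $\alpha D=D\alpha$ and $\alpha D'=D'\alpha$ one gets $\alpha(DD'-\varepsilon(d,d')D'D)=(DD'-\varepsilon(d,d')D'D)\alpha$. These same relations give $D\alpha^{s}=\alpha^{s}D$ and $D'\alpha^{k}=\alpha^{k}D'$, which I will use freely to rewrite $\alpha^{k}\alpha^{s}=\alpha^{k+s}$ on untouched slots and to slide a twist past the acting map.

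For the Leibniz condition I would expand $[D,D']([x_1,\dots,x_n])=DD'([x_1,\dots,x_n])-\varepsilon(d,d')\,D'D([x_1,\dots,x_n])$ into a double sum. Applying first the $\alpha^{s}$-derivation $D'$ and then the $\alpha^{k}$-derivation $D$ produces, for the $DD'$ part, a sum over ordered pairs of slots: a diagonal part where both maps hit the same argument $x_i$, and off-diagonal terms where $D$ hits $x_j$ and $D'$ hits $x_i$ with $i\ne j$. In every resulting bracket the untouched slots carry $\alpha^{k}\alpha^{s}=\alpha^{k+s}$, the $D$-slot carries $\alpha^{s}D$, and the $D'$-slot carries $\alpha^{k}D'$, so the off-diagonal brackets coming from $DD'$ and from $D'D$ are literally the same expressions; only their $\varepsilon$-coefficients differ. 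I would expand the $D'D$ part symmetrically.

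The key step is the cancellation of the off-diagonal terms. For a fixed mixed bracket (with $D$ acting in slot $j$ and $D'$ in slot $i$) I would compare its coefficient in the $DD'$ part with its coefficient in the $\varepsilon(d,d')D'D$ part. These differ because inserting a degree-$d$ or degree-$d'$ element ahead of the acting slot shifts the relevant weight $X_i$ or $X_j$ by $d$ or $d'$; expanding this shift via $\varepsilon(a+b,c)=\varepsilon(a,c)\varepsilon(b,c)$ and invoking $\varepsilon(d,d')\varepsilon(d',d)=1$ from axiom (i), the two contributions turn out to be negatives of one another, so every mixed term cancels. I would treat $j<i$ and $j>i$ separately, since the shifted weight is $X_i$ in one ordering and $X_j$ in the other.

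Finally I would assemble the diagonal terms. At slot $i$ the $DD'$ part contributes $\varepsilon(d',X_i)\varepsilon(d,X_i)\,[\dots,DD'(x_i),\dots]$ and the subtracted part contributes $\varepsilon(d,d')\varepsilon(d,X_i)\varepsilon(d',X_i)\,[\dots,D'D(x_i),\dots]$; using $\varepsilon(d,X_i)\varepsilon(d',X_i)=\varepsilon(d+d',X_i)$ from axiom (iii), their difference is $\varepsilon(d+d',X_i)\,[\alpha^{k+s}(x_1),\dots,[D,D'](x_i),\dots,\alpha^{k+s}(x_n)]$. Summing over $i$ yields exactly the $\alpha^{k+s}$-derivation identity of degree $d+d'$, completing the proof. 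I expect the only genuine difficulty to be the grading bookkeeping—keeping straight how the weights $X_i$ and $X_j$ change when a degree-$d$ or degree-$d'$ element is inserted ahead of the acting slot—while the underlying algebraic manipulations are routine.
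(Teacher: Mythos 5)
Your proposal is correct, including the necessary reading $D'\in Der_{\alpha^{s}}(L)$ forced by the exponent $k+s$; the double-sum expansion, the cancellation of the mixed terms via $\varepsilon(d,d')\varepsilon(d',d)=1$, and the assembly of the diagonal terms with $\varepsilon(d,X_i)\varepsilon(d',X_i)=\varepsilon(d+d',X_i)$ are exactly the right steps. The paper actually states this lemma without proof, but the identical computation (diagonal plus two off-diagonal sums, with the mixed terms cancelling in the $\varepsilon$-commutator) is carried out in the proof of the subsequent proposition for $GDer(L)$, so your argument coincides with the paper's approach.
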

Denote by $Der(L)=\oplus_{k\geq -1}Der_{\alpha^k}(L)$.
\begin{proposition}
$(Der(L), [\cdot, \cdot], \omega)$ is a Hom-Lie color algebra, with $\omega (D)=D\circ \alpha$.
\end{proposition}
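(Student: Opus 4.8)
The plan is to show that $(Der(L), [\cdot,\cdot], \omega)$ satisfies the three defining axioms of a Hom-Lie color algebra: that it is a $G$-graded space, that the bracket $[D,D']=D\circ D'-\varepsilon(d,d')D'\circ D$ is $\varepsilon$-skew-symmetric, and that the $\omega$-twisted Hom-Jacobi identity holds with $\omega(D)=D\circ\alpha$. The grading is inherited from the degrees $d$ of homogeneous derivations, and the fact that $Der(L)=\oplus_{k\geq-1}Der_{\alpha^k}(L)$ together with the preceding Lemma (which guarantees $[D,D']$ lands in the appropriate derivation space) shows the bracket is internal. Skew-symmetry is immediate from the definition: $[D',D]=D'\circ D-\varepsilon(d',d)D\circ D'=-\varepsilon(d',d)\bigl(D\circ D'-\varepsilon(d,d')D'\circ D\bigr)$, using $\varepsilon(d',d)\varepsilon(d,d')=1$ from axiom (i) of the bicharacter.

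First I would verify that $\omega=-\circ\alpha$ is an even endomorphism of $Der(L)$ with respect to the bracket, i.e. $\omega([D,D'])=[\omega(D),\omega(D')]$. Since $\alpha$ commutes with every derivation (by the condition $\alpha\circ D=D\circ\alpha$), we have $\omega(D)=D\circ\alpha=\alpha\circ D$, so $\omega(D)\circ\omega(D')=D\circ\alpha\circ D'\circ\alpha=D\circ D'\circ\alpha^2$, and then $[\omega(D),\omega(D')]=(D\circ D'-\varepsilon(d,d')D'\circ D)\circ\alpha^2=[D,D']\circ\alpha^2=\omega^2([D,D'])$; the degree-$0$ (even) property of $\omega$ follows because composing with the even map $\alpha$ preserves degree.

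The main computational step is the Hom-Jacobi identity. For three homogeneous derivations $D_1,D_2,D_3$ of degrees $d_1,d_2,d_3$, I would expand
\[
\varepsilon(d_3,d_1)[\omega(D_1),[D_2,D_3]]+\text{(two cyclic terms)}
\]
by substituting $\omega(D_1)=\alpha\circ D_1$ and the commutator definition, producing twelve terms of the form $\pm\varepsilon(\cdots)\,\alpha\circ D_i\circ D_j\circ D_k$. The expected obstacle is the bookkeeping of the $\varepsilon$-signs: one must repeatedly apply the multiplicativity axioms (ii)–(iii) $\varepsilon(a,b+c)=\varepsilon(a,b)\varepsilon(a,c)$ and $\varepsilon(a+b,c)=\varepsilon(a,c)\varepsilon(b,c)$ to match the bicharacter factors attached to each associative monomial $\alpha\circ D_i\circ D_j\circ D_k$, and then check that the six orderings cancel in pairs. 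This is the color/graded analogue of the standard verification that derivations of a Lie algebra form a Lie algebra under commutator, and the cancellation is forced by the same combinatorics once the signs are correctly tracked.

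Finally I would note that the factor $\alpha$ appearing uniformly on the left of every monomial (coming from each $\omega(D_i)=\alpha\circ D_i$) is what makes the twisted Jacobi identity close, rather than the untwisted one: the $\alpha$ cannot be removed, which is precisely why $(Der(L),[\cdot,\cdot],\omega)$ is a genuine Hom-Lie color algebra and not merely a Lie color algebra. I would assume the earlier Lemma for internality of the bracket and present the skew-symmetry and $\omega$-compatibility as short one-line verifications, devoting the bulk of the argument to the sign-tracking in the Hom-Jacobi expansion.
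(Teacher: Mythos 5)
The paper states this proposition without proof, so there is no official argument to compare against; judged on its own terms, your outline is essentially correct and would serve as a complete proof. The decisive observation — that every $D\in Der_{\alpha^k}(L)$ satisfies $\alpha\circ D=D\circ\alpha$, so all copies of $\alpha$ coming from $\omega(D_i)=D_i\circ\alpha$ can be pulled to one side and the twisted Jacobi identity collapses to the ordinary color-commutator Jacobi identity in the associative algebra $End(L)$ — is exactly the right mechanism, and together with the preceding Lemma (closure of the bracket) and the $\varepsilon$-skew-symmetry check it settles the claim. Two small points to repair. First, you announce you will verify $\omega([D,D'])=[\omega(D),\omega(D')]$, but your own computation yields $[\omega(D),\omega(D')]=[D,D']\circ\alpha^{2}=\omega^{2}([D,D'])$, which is \emph{not} $\omega([D,D'])=[D,D']\circ\alpha$; so multiplicativity of $\omega$ is not established. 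Fortunately it is also not required: the paper's definition of a Hom-Lie color algebra asks only for an even linear map, with multiplicativity a separate, stronger property, so you should simply delete that claim (and instead record that $\omega$ maps $Der(L)$ into itself, since $D\circ\alpha\in Der_{\alpha^{k+1}}(L)$ by the earlier corollary on $\Delta_s=D\circ\beta^s$). Second, your closing remark that ``the $\alpha$ cannot be removed'' is misleading: precisely because $\alpha$ commutes with every derivation and factors out uniformly, $(Der(L),[\cdot,\cdot],\varepsilon)$ already satisfies the untwisted color Jacobi identity as a subalgebra of $End(L)$, and the $\omega$-twisted identity holds as a consequence; the twist is compatible rather than indispensable. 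Neither point affects the validity of the proof, only the accuracy of the surrounding commentary.
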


\begin{definition}
 An endomorphism $D$ of degree $d$ of a multiplicative $n$-Hom-Lie color algebra $(L, [\cdot, \dots, \cdot], \varepsilon, \alpha)$ is called a generalized
$\alpha^k$-derivation \index{Derivation!generalised!multiplicative n-Hom-Lie color algebra} if there exist linear mappings $D', D'', \dots, D^{(n-1)}, D^{(n)}$ of degree $d$ such that for any $x_1,\dots,x_n\in\mathcal{H}(L)$:
\begin{eqnarray}
&& D\circ\alpha=\alpha\circ D\;\;\mbox{and}\;\; D^{(i)}\circ\alpha=\alpha\circ D^{(i)},\\
&& \begin{array}{l}
D^{(n)}([x_1, \dots, x_n])= \\
\displaystyle \sum_{i=1}^n\varepsilon(d, X_{i})[\alpha^k(x_1), \dots, \alpha^k(x_{i-1}), D^{(i-1)}(x_i), \alpha^k(x_{i+1}), \dots, \alpha^k(x_n)].
\end{array}
\end{eqnarray}
An $(n+1)$-tuple $(D, D', D'', \dots, D^{(n-1)}, D^{(n)})$ is called an $(n+1)$-ary $\alpha^k$-derivation.
\end{definition}
The set of generalized $\alpha^k$-derivation is denoted by $GDer_{\alpha^k}$. Set $$GDer(L)=\oplus_{k\geq -1}GDer_{\alpha^k}(L).$$

\begin{definition}
Let $(L, [\cdot, \dots, \cdot], \varepsilon, \alpha)$ be a multiplicative $n$-Hom-Lie color algebra. A linear mapping $D\in End(L)$ is said to be an
$\alpha^k$-quasiderivation of degree $d$  \index{Quasiderivation!multiplicative n-Hom-Lie color algebra} if there exists a $D'\in End(L)$ of degree $d$ such that
\begin{eqnarray}
&& D^{'}([x_1, \dots, x_n]) = \\
&& \sum_{i=1}^n\varepsilon(d, X_{i})[\alpha^k(x_1), \dots, \alpha^k(x_{i-1}), D(x_i), \alpha^k(x_{i+1}), \dots, \alpha^k(x_n)]
\nonumber
\end{eqnarray}
for all $x_1,\dots,x_n\in\mathcal{H}(L)$
\end{definition}
We call $D'$ the endomorphism associated to the $\alpha^k$-quasiderivation $D$.
The set of \\
$\alpha^k$-quasiderivations will be denoted $QDer(L)$. Set $QDer(L)=\oplus_{k\geq -1}QDer_{\alpha^k}(L)$.

\begin{definition}
 Let $(L, [\cdot, \dots, \cdot], \varepsilon, \alpha)$ be a multiplicative $n$-Hom-Lie color algebra. The set $C_{\alpha^k}(L)$ consisting of linear mapping
 $D$ of degree $d$ with
the property
\begin{eqnarray}
&& D([x_1, \dots, x_n])= \\
&& \varepsilon(d, X_{i})[\alpha^k(x_1), \dots, \alpha^k(x_{i-1}), D(x_i), \alpha^k(x_{i+1}), \dots, \alpha^k(x_n)]
\nonumber
\end{eqnarray}
for all $x_1,\dots,x_n\in\mathcal{H}(L)$, is called the $\alpha^k$-centroid of $L$. \index{Centroid!multiplicative n-Hom-Lie color algebra}
\end{definition}
We recover the definition of the centroid when $k=0$.  \index{Centroid}

\begin{definition}
 Let $(L, [\cdot, \dots, \cdot], \varepsilon, \alpha)$ be a multiplicative $n$-Hom-Lie color algebra. The set $QC_{\alpha^k}(L)$ consisting of linear mapping
 $D$ of degree $d$ with
the property
\begin{eqnarray}
&& [D(x_1), \dots, x_n]= \\
&& \varepsilon(d, X_{i})[\alpha^k(x_1), \dots, \alpha^k(x_{i-1}), D(x_i), \alpha^k(x_{i+1}), \dots, \alpha^k(x_n)],
\nonumber
\end{eqnarray}
for all $x_1,\dots,x_n\in\mathcal{H}(L)$, is called the $\alpha^k$-quasicentroid of $L$. \index{Quasicentroid!multiplicative n-Hom-Lie color algebra}
\end{definition}

\begin{definition}
  Let $(L, [\cdot, \dots, \cdot], \varepsilon, \alpha)$ be a multiplicative $n$-Hom-Lie color algebra. The set $ZDer_{\alpha^k}(L)$ consisting of linear mappings $D$ of degree $d$, such that for all $x_1,\dots,x_n\in\mathcal{H}(L)$:
\begin{eqnarray}
&& D([x_1, \dots, x_n])= \\
&& \varepsilon(d, X_i)[\alpha^k(x_1), \dots, \alpha^k(x_{i-1}), D(x_i), \alpha^k(x_{i+1}), \dots, \alpha^k(x_n)]=0, \nonumber \\
&& \hspace{6cm} i=1, 2, \dots, n, \label{BakayokoSilvestrov:cd} \nonumber
\end{eqnarray}
is called the set of central $\alpha^k$-derivations of $L$. \index{Derivation!central!multiplicative n-Hom-Lie color algebra}
\end{definition}

It is easy to see that
$$ZDer(L)\subseteq Der(L)\subseteq QDer(L)\subseteq GDer(L)\subseteq End(L).$$

\begin{proposition}
 Let $(L, [\cdot, \dots, \cdot], \varepsilon, \alpha)$ be a multiplicative $n$-Hom-Lie color algebra.
\begin{enumerate}
 \item [1)] $GDer(L), QDer(L)$, $C(L)$ are color Hom-subalgebras of
 $(End (L), [\cdot, \cdot], \omega)${\rm :}
\begin{enumerate}
\item[1a)] $\omega(GDer(L))\subseteq GDer(L)$ and $[GDer(L), GDer(L)]\subseteq GDer(L)$.
\item[2b)] $\omega(QDer(L))\subseteq QDer(L)$ and $[QDer(L), QDer(L)]\subseteq QDer(L)$.
\item[3c)] $\omega(C(L))\subseteq C(L)$ and $[C(L), C(L)]\subseteq C(L)$.
\end{enumerate}
\item [2)] $ZDer(L)$ is a color Hom-ideal of $Der(L)${\rm :} \\
$\omega(ZDer(L))\subseteq ZDer(L)$ and $[ZDer(L), Der(L)]\subseteq ZDer(L)$.
\end{enumerate}
\end{proposition}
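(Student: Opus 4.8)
The plan is to reduce to homogeneous components and then, for each of the four sets, verify the two required closures separately: stability under $\omega$, where $\omega(D) = D\circ\alpha$, and stability under the color commutator $[D,D'] = D\circ D' - \varepsilon(d,d')\,D'\circ D$. Since $GDer(L) = \oplus_{k}GDer_{\alpha^k}(L)$ and likewise for the other three families, it suffices to show that $\omega$ raises the twisting index by one, sending $X_{\alpha^k}(L)$ into $X_{\alpha^{k+1}}(L)$, and that the commutator of an element of $X_{\alpha^k}(L)$ with one of $X_{\alpha^l}(L)$ lands in $X_{\alpha^{k+l}}(L)$, where $X$ is any of $GDer$, $QDer$, $C$, $ZDer$.

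For the $\omega$-closures I would argue uniformly. Given $D$, together with its companion maps when applicable, satisfying a defining identity twisted by $\alpha^k$, I apply that identity to the arguments $\alpha(x_1),\dots,\alpha(x_n)$ and use multiplicativity $\alpha([x_1,\dots,x_n]) = [\alpha(x_1),\dots,\alpha(x_n)]$ together with $D\circ\alpha = \alpha\circ D$ and $D^{(i)}\circ\alpha = \alpha\circ D^{(i)}$ to carry one copy of $\alpha$ through. Since $\alpha$ is even, the degree $d$ and every bicharacter factor are untouched, so the net effect is exactly the defining identity for $\omega(D) = D\circ\alpha$ twisted by $\alpha^{k+1}$, with companions $\omega(D^{(i)}) = D^{(i)}\circ\alpha$. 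The same substitution simultaneously settles $GDer$, $QDer$, $C$ and, for $ZDer$, shows that $D\circ\alpha$ again annihilates all brackets and all one-slot insertions.

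The bracket closures are the core of the argument and the main obstacle. For $C(L)$ it is the mildest: a centroid element obeys a single-term identity valid at every slot $i$, so composing $D_1$ with $D_2$ simply stacks the two single-slot relations at the same position, and forming $D_1 D_2 - \varepsilon(d_1,d_2)\,D_2 D_1$ fuses the two scalar factors, via $\varepsilon(d_1,b+c)=\varepsilon(d_1,b)\varepsilon(d_1,c)$, into $\varepsilon(d_1+d_2,X_i)$ multiplying $[D_1,D_2]$ in slot $i$. For $GDer(L)$ and $QDer(L)$ I would expand $D_1^{(n)}\circ D_2^{(n)}$, respectively the associated maps $D_1'\circ D_2'$, on a bracket by inserting first the inner defining identity and then the outer one. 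This yields, for each ordered pair of slots $(i,j)$, a term carrying one operator in slot $j$ and the other in slot $i$; the diagonal terms $i=j$ assemble into the color commutator $[D_1^{(i-1)},D_2^{(i-1)}]$, respectively $[D_1,D_2]$, acting in slot $i$ with uniform twist $\alpha^{k+l}$, which is exactly the required identity and identifies the companion tuple of $[D_1,D_2]$. The delicate point is that the off-diagonal terms must cancel against their mirror images from the $\varepsilon(d_1,d_2)\,D_2 D_1$ expansion: here I would invoke $D_i\circ\alpha = \alpha\circ D_i$ to fuse the unequal powers $\alpha^k$, $\alpha^l$ appearing in the cross terms and the bicharacter identities to reconcile the accumulated signs, so that the two mirror cross terms differ precisely by the factor $\varepsilon(d_1,d_2)$ and cancel. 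Tracking which companion index $D^{(p-1)}$ sits in each slot, and checking that these indices agree under the slot-swap $(i,j)\mapsto(j,i)$, is the bookkeeping on which the cancellation hinges.

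For part 2) the inclusion $\omega(ZDer(L))\subseteq ZDer(L)$ is the $ZDer$ instance of the $\omega$-argument above, so only the ideal inclusion $[ZDer(L),Der(L)]\subseteq ZDer(L)$ remains. Taking $D\in ZDer_{\alpha^k}(L)$ and $E\in Der_{\alpha^l}(L)$, I first note that $[D,E]$ annihilates every bracket: $D$ kills all brackets by definition, and $E([x_1,\dots,x_n])$ is a sum of brackets, each again killed by $D$, so $DE$ and $ED$ both vanish on brackets. The nontrivial step is that each one-slot insertion $[\alpha^{k+l}(x_1),\dots,[D,E](x_i),\dots,\alpha^{k+l}(x_n)]$ vanishes. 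The $DE$ contribution is immediate from the central condition for $D$ with $E(x_i)$ placed in slot $i$. For the $ED$ contribution I would differentiate the vanishing central bracket $[\alpha^k(x_1),\dots,D(x_i),\dots,\alpha^k(x_n)]=0$ by the derivation $E$: expanding via the derivation identity for $E$ and using $E\circ\alpha=\alpha\circ E$ and $D\circ\alpha=\alpha\circ D$, every slot-$j$ term with $j\neq i$ is itself a central bracket for $D$ and so vanishes, leaving a nonzero scalar multiple of $[\alpha^{k+l}(x_1),\dots,ED(x_i),\dots,\alpha^{k+l}(x_n)]$, which must therefore be zero. Combining the two contributions gives $[D,E]\in ZDer_{\alpha^{k+l}}(L)$, and this device of differentiating a vanishing central bracket is the one genuinely nonroutine idea in part 2).
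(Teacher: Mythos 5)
Your proposal is correct and follows essentially the same route as the paper: direct verification that $\omega=(\,\cdot\,)\circ\alpha$ raises the twisting index by one using multiplicativity and $D^{(i)}\circ\alpha=\alpha\circ D^{(i)}$, and expansion of the composed operators on a bracket so that the diagonal terms assemble the color commutator in each slot while the off-diagonal terms cancel in mirror pairs via $\varepsilon(d_1,d_2)\varepsilon(d_2,d_1)=1$. Your treatment of part 2), in particular the device of applying the derivation $E$ to the vanishing central bracket to kill the $ED$ contribution, correctly supplies details that the paper compresses into ``by the same method as previously.''
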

\begin{proof}
1a) Let us prove that if $D\in GDer(L)$, then $\omega(D)\in GDer(L)$.

 For any $x_1,\dots,x_n\in\mathcal{H}(L)$,
\begin{eqnarray}
 &&(\omega(D^{(n)}))([x_1, \dots, x_n])=(D^{(n)}\circ\alpha)([x_1, \dots, x_i, \dots, x_n]) \nonumber \\
&&=D^{(n)}([\alpha(x_1), \dots, \alpha(x_i), \dots, \alpha(x_n)])\nonumber\\
&&=\sum_{i=1}^n\varepsilon(d, X_{i})[\alpha^{k+1}(x_1), \dots, \alpha^{k+1}(x_{i-1}), D^{(i-1)}\alpha(x_i), \alpha^{k+1}(x_{i+1}), \dots,
\alpha^{k+1}(x_n)]\nonumber\\
&&=\sum_{i=1}^n\varepsilon(d, X_{i})[\alpha^{k+1}(x_1), \dots, \alpha^{k+1}(x_{i-1}), ( D^{(i-1)}\circ\alpha)(x_i), \nonumber \\
&& \hspace{6cm} \alpha^{k+1}(x_{i+1}), \dots,\alpha^{k+1}(x_n)]\nonumber\\
&&=\sum_{i=1}^n\varepsilon(d, X_{i})[\alpha^{k+1}(x_1), \dots, \alpha^{k+1}(x_{i-1}), \omega(D^{(i-1)})(x_i), \alpha^{k+1}(x_{i+1}), \dots,
\alpha^{k+1}(x_n)]\nonumber.
\end{eqnarray}
This means that $\omega(D)$ is an $\alpha^{k+1}$-derivation i.e. $\omega(D)\in GDer(L)$.
Now let $D_1\in GDer_{\alpha^{k}}(L)$ and $D_2\in GDer_{\alpha^{s}}(L)$, we have
\begin{eqnarray}
 &&(D_2^{(n)}D_1^{(n)})([x_1, \dots, x_n])=D_2^{(n)}(D_1^{(n)}([x_1, \dots, x_n]))=\nonumber\\
&&=\sum_{i=1}^n\varepsilon(d_1, X_{i})D_2^{(n)}([\alpha^{k}(x_1), \dots, \alpha^{k}(x_{i-1}), D_1^{(i-1)}(x_i), \alpha^{k}(x_{i+1}), \dots,
\alpha^{k}(x_n)])\nonumber\\
&&=\sum_{i=1}^n\sum_{j<i}^n\varepsilon(d_1, X_{i})\varepsilon(d_2, X_{j})
([\alpha^{k+s}(x_1), \dots, D_2^{(j-1)}(x_j),\dots, \alpha^{k+s}(x_{i-1}), D_1^{(i-1)}\alpha^{s}(x_i),
\nonumber \\
&& \hspace{7cm}
\alpha^{k+s}(x_{i+1}), \dots,
\alpha^{k+s}(x_n)])\nonumber\\
&&+\sum_{i=1}^n\varepsilon(d_1+d_2, X_{i})
([\alpha^{k+s}(x_1), \dots,  \alpha^{k+s}(x_{i-1}), D_2^{(i-1)}D_1^{(i-1)}(x_i), \nonumber \\
&& \hspace{7cm}
\alpha^{k+s}(x_{i+1}), \dots,
\alpha^{k}(x_n)])\nonumber\\
&&+\sum_{i=1}^n\sum_{j>i}^n\varepsilon(d_1, X_{i})\varepsilon(d_2, d_1+X_{j})
([\alpha^{k+s}(x_1), ,\dots, \alpha^{k+s}(x_{i-1}), D_1^{(i-1)}\alpha^{s}(x_i), \nonumber \\
&& \hspace{5cm}
\alpha^{k+s}(x_{i+1}), \dots, D_2^{(j-1)}(x_j), \dots,
\alpha^{k+s}(x_n)])\nonumber.
\end{eqnarray}
It follows that
\begin{eqnarray}
 &&([D_1^{(n)}, D_2^{(n)}])([x_1, \dots, x_n])=(D_1^{(n)}D_2^{(n)}-\varepsilon(d_1, d_2)D_2^{(n)}(D_1^{(n)})([x_1, \dots, x_n]))=\nonumber\\
&&=\sum_{i=1}^n\varepsilon(d_1+d_2, X_{i})
([\alpha^{k+s}(x_1), \dots,  \alpha^{k+s}(x_{i-1}),
\nonumber \\
&& \hspace{1.5cm} (D_1^{(i-1)}D_2^{(i-1)}-\varepsilon(d_1, d_2)D_2^{(i-1)}D_1^{(i-1)}](x_i), \alpha^{k+s}(x_{i+1}), \dots,
\alpha^{k+s}(x_n)])\nonumber\\
&&=\sum_{i=1}^n\varepsilon(d_1+d_2, X_{i})
([\alpha^{k+s}(x_1), \dots,  \alpha^{k+s}(x_{i-1}),
\nonumber \\
&& \hspace{1.5cm}
[D_1^{(i-1)}, D_2^{(i-1)}](x_i), \alpha^{k+s}(x_{i+1}), \dots,
\alpha^{k+s}(x_n)])\nonumber.
\end{eqnarray}
Thus we obtain that $[D_1, D_2]\in GDer_{\alpha^{k+s}}(L)$.
\begin{enumerate}
\item[1b)] That $QDer(L)$ is a color Hom-subalgebra of $(End (L), [\cdot, \cdot], \omega)$ is proved in the similar way.
\item[1c)]  Let $D_1\in C_{\alpha^{k}}(L)$ and $D_2\in C_{\alpha^{s}}(L)$. Then
\begin{eqnarray}
&& \omega(D_1)([x_1, x_2, \dots, x_n])=\alpha D_1([x_1, x_2, \dots, x_n])\nonumber\\
&&=\varepsilon(d_1, X_i)\alpha([\alpha^k(x_1), \alpha^k(x_2), \dots, D_1(x_i), \dots, \alpha^k(x_n])\nonumber\\
&&=\varepsilon(d_1, X_i)[\alpha^{k+1}(x_1), \alpha^{k+1}(x_2), \dots, D_1(x_i), \dots, \alpha^{k+1}(x_n])\nonumber.
\end{eqnarray}
Thus $\omega(D)\in C_{\alpha^{k+1}}(L)$. Moreover,
\begin{eqnarray}
&& [D_1, D_2]([x_1, \dots, x_n])=D_1D_2([x_1, \dots, x_n])-
\varepsilon(d_1, d_2)D_2D_1([x_1, \dots, x_n])\nonumber\\
&&=\varepsilon(d_2, X_i)D_1[\alpha^k(x_1), \alpha^k(x_2), \dots, D_2(x_i), \dots, \alpha^k(x_n)]\nonumber\\
&&-\varepsilon(d_1, d_2)\varepsilon(d_1, X_i)D_2[\alpha^s(x_1), \alpha^s(x_2), \dots, D_1(x_i), \dots, \alpha^s(x_n)]\nonumber\\
&&=\varepsilon(d_1+d_2, X_i)[\alpha^{k+s}(x_1), \alpha^{k+s}(x_2), \dots,D_1 D_2(x_i), \dots, \alpha^{k+s}(x_n)]\nonumber\\
&&-\varepsilon(d_1+d_2, X_i)[\alpha^{k+s}(x_1), \alpha^{k+s}(x_2), \varepsilon(d_1, d_2)\dots,D_2 D_1(x_i), \dots, \alpha^{k+s}(x_n)]\nonumber\\
&&=\varepsilon(d_1+d_2, X_i)[\alpha^{k+s}(x_1), \alpha^{k+s}(x_2), \dots, [D_1 D_2](x_i), \dots, \alpha^{k+s}(x_n)]\nonumber.
\end{eqnarray}
So, $[D_1, D_2]\in C_{\alpha^{k+s}}(L)$ and finally $[D_1, D_2]\in C(L)$.
\item[2)] By the same method as previously one can show that $\omega(D)\in ZDer_{\alpha^{k+1}}(L)$ and \\
 $[D_1, D_2]\in ZDer_{\alpha^{k+s}}(L)$, where $D_1\in ZDer_{\alpha^{k}}(L)$ and $D_2\in Der_{\alpha^{s}}(L)$.
\end{enumerate}
\end{proof}

\begin{proposition}
 Let $(L, [\cdot, \dots, \cdot], \varepsilon, \alpha)$ be a multiplicative $n$-Hom-Lie color algebra.
\begin{enumerate}
\item [1)]
If $\varphi\in C(L)$ and $D\in Der(L)$, then $\varphi D$ is a derivation i.e.
$$C(L)\cdot Der(L)\subseteq Der(L).$$
\item [2)] Any element of centroid is a quasiderivation i.e.
 $$C(L)\subseteq QDer(L).$$
\end{enumerate}
\end{proposition}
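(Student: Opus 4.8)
The plan is to establish the two inclusions by direct computation from the defining identities, taking $\varphi$ and $D$ homogeneous and using the bicharacter axioms to collect the scalar factors. Both parts reduce to sliding a centroid map through a bracket via the centroid identity.

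For part 1), fix $\varphi\in C_{\alpha^k}(L)$ of degree $d_\varphi$ and $D\in Der_{\alpha^s}(L)$ of degree $d_D$; I will show $\varphi D\in Der_{\alpha^{k+s}}(L)$ of degree $d_\varphi+d_D$. First I would record that $\varphi D$ commutes with $\alpha$: since $D\alpha=\alpha D$ and $\varphi\alpha=\alpha\varphi$ (as for every centroid element of a multiplicative algebra), one gets $\alpha\varphi D=\varphi\alpha D=\varphi D\alpha$. The substance is the Leibniz-type identity. Starting from $(\varphi D)([x_1,\dots,x_n])=\varphi\big(D([x_1,\dots,x_n])\big)$, I expand the inner bracket by the $\alpha^s$-derivation rule for $D$, producing a sum over $i$ of terms $\varepsilon(d_D,X_i)\,\varphi\big([\alpha^s(x_1),\dots,\alpha^s(x_{i-1}),D(x_i),\alpha^s(x_{i+1}),\dots,\alpha^s(x_n)]\big)$.

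The key step is to push $\varphi$ inside each bracket via the centroid property, choosing the $i$-th slot, i.e.\ the slot already occupied by $D(x_i)$. Because $\alpha$ is even, the entries $\alpha^s(x_1),\dots,\alpha^s(x_{i-1})$ preceding that slot carry the same degrees as $x_1,\dots,x_{i-1}$, so the scalar produced by the centroid rule is exactly $\varepsilon(d_\varphi,X_i)$, while the remaining entries acquire an extra $\alpha^k$, turning every $\alpha^s$ into $\alpha^{k+s}$ and the $i$-th slot into $\varphi D(x_i)$. Collecting the two scalars by bicharacter axiom (iii) gives $\varepsilon(d_D,X_i)\varepsilon(d_\varphi,X_i)=\varepsilon(d_\varphi+d_D,X_i)$, whence
$$(\varphi D)([x_1,\dots,x_n])=\sum_{i=1}^n\varepsilon(d_\varphi+d_D,X_i)[\alpha^{k+s}(x_1),\dots,(\varphi D)(x_i),\dots,\alpha^{k+s}(x_n)],$$
which is precisely the defining identity of an $\alpha^{k+s}$-derivation of degree $d_\varphi+d_D$.

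For part 2), let $\varphi\in C_{\alpha^k}(L)$ have degree $d$. The quasiderivation condition asks for an endomorphism $\varphi'$ of degree $d$ with $\varphi'([x_1,\dots,x_n])=\sum_{i=1}^n\varepsilon(d,X_i)[\alpha^k(x_1),\dots,\varphi(x_i),\dots,\alpha^k(x_n)]$. But the centroid identity says that, for each fixed $i$, the $i$-th summand equals $\varphi([x_1,\dots,x_n])$; hence the whole sum equals $n\,\varphi([x_1,\dots,x_n])$. Thus $\varphi'=n\varphi$ serves as the associated endomorphism (it has degree $d$, and the identity holds whatever the characteristic of $\mathbb{K}$), exhibiting $\varphi$ as an $\alpha^k$-quasiderivation and proving $C(L)\subseteq QDer(L)$. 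The only genuine obstacle is the bookkeeping in part 1): one must align the slot into which $\varphi$ is moved with the slot $D$ occupies and verify, using the evenness of $\alpha$, that the degree appearing in the centroid scalar is the same $X_i$ occurring in the derivation scalar, so that the two factors merge into a single $\varepsilon(d_\varphi+d_D,X_i)$. Everything else is routine once the commutation $\varphi\alpha=\alpha\varphi$ is in hand.
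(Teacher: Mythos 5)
Your argument is correct and follows essentially the same route as the paper: for 1) expand $D$ by its Leibniz-type rule and then slide $\varphi$ into the slot occupied by $D(x_i)$ via the centroid identity, merging the two bicharacter factors into $\varepsilon(d_\varphi+d_D,X_i)$; for 2) observe that the $n$ summands of the would-be quasiderivation identity are all equal to $\varphi([x_1,\dots,x_n])$ and take $\varphi'=n\varphi$. Your added remarks on $\varphi D$ commuting with $\alpha$ and on the evenness of $\alpha$ guaranteeing the correct degree in the centroid scalar are small completeness points the paper leaves implicit, not a different method.
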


\begin{proof}
1) For any $x_1, \dots, x_n\in \mathcal{H}(L)$,
\begin{eqnarray}
 \varphi D([x_1, \dots, x_n])&=&\sum_{i=1}^n\varepsilon(d, X_i)\varphi([\alpha^k(x_1), \dots, D(x_i), \dots, \alpha^k(x_n)])\nonumber\\
&=&\sum_{i=1}^n\varepsilon(d, X_i)\varepsilon(\varphi, X_i)[\alpha^{k+s}(x_1), \dots, \varphi D(x_i), \dots, \alpha^{k+s}(x_n)])\nonumber\\
&=&\sum_{i=1}^n\varepsilon(d+\varphi, X_i)[\alpha^{k+s}(x_1), \dots, \varphi D(x_i), \dots, \alpha^{k+s}(x_n)])\nonumber.
\end{eqnarray}
Thus $\varphi D$ is an $\alpha^{k+s}$-derivation of degree $d+\varphi$.

2) Let $D$ be an $\alpha^{k}$-centroid, then for any $x_1, \dots, x_n\in \mathcal{H}(L),$
\begin{eqnarray}
 D([x_1, \dots, x_n])=\varepsilon(d, X_i)[\alpha^k(x_1), \dots D(x_i), \dots, \alpha^k(x_n)], i=1, 2, \dots, n.
\end{eqnarray}
It follows that
\begin{eqnarray}
 \sum_{i=1}^n\varepsilon(d, X_i)[\alpha^k(x_1), \dots D(x_i), \dots, \alpha^k(x_n)]=nD([x_1, \dots, x_n]).
\end{eqnarray}
It suffises to take $D'=nD$.
 \end{proof}

\begin{lemma}\label{BakayokoSilvestrov:lm}
 Let $(L, [\cdot, \dots, \cdot], \varepsilon, \alpha)$ be a multiplicative $n$-Hom-Lie color algebra. Then
\begin{enumerate}
\item [1)] The $\varepsilon$-commutator of two elements of quasicentroid is a quasiderivation i.e.
 $$[QC(L), QC(L)]\subseteq QDer(L).$$
\item [2)] $QDer(L)+QC(L)\subseteq GDer(L)$.
\end{enumerate}
\end{lemma}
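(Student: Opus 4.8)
The plan is to handle the two inclusions separately, in each case producing by hand the extra data demanded by the target class and then checking the defining identity by transporting the operators between the bracket's slots with the quasicentroid relation. For the first inclusion, fix $D_1\in QC_{\alpha^k}(L)$ of degree $d_1$ and $D_2\in QC_{\alpha^s}(L)$ of degree $d_2$, and set $D=[D_1,D_2]=D_1D_2-\varepsilon(d_1,d_2)D_2D_1$, an endomorphism of degree $d_1+d_2$. I claim that $D\in QDer_{\alpha^{k+s}}(L)$ with associated map $D'=0$; that is, I must show that the quasiderivation defect
\[
R:=\sum_{i=1}^n\varepsilon(d_1+d_2,X_i)\,[\alpha^{k+s}(x_1),\dots,[D_1,D_2](x_i),\dots,\alpha^{k+s}(x_n)]
\]
vanishes for all homogeneous $x_1,\dots,x_n$, writing $[D_1D_2]_i$ for the bracket carrying $D_1D_2(x_i)$ in the $i$-th slot and $\alpha^{k+s}$ in all others (and similarly $[D_2D_1]_i$).

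The working tool is the slot-independence encoded in the quasicentroid axiom: for each $a$ the quantity $\varepsilon(d_a,\cdot)[\dots,D_a(\cdot),\dots]$ does not depend on which slot carries $D_a$, so $D_a$ may be moved from any slot to any other at the price of one $\varepsilon$-factor together with a redistribution of the twists (a free slot carrying $\alpha^{k+s}$, the slot next to $D_1$ carrying $\alpha^s$, the one next to $D_2$ carrying $\alpha^k$), using $D_a\alpha=\alpha D_a$ and multiplicativity. First I would peel the outer factor of $D_1D_2(x_i)$ into a free slot $a\ne i$, reducing $[D_1D_2]_i$ to the normal form $N_{a,i}=[\dots,\alpha^s D_1(x_a),\dots,\alpha^k D_2(x_i),\dots]$, and symmetrically reduce $[D_2D_1]_i$ to $N_{i,a'}$. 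Transporting $D_1$ across the $D_2$-free slots and $D_2$ across the $D_1$-free slots shows that all such normal forms agree up to an explicit sign, and for $n\ge 3$ a third slot lets one interchange the two labels; the outcome is that $\varepsilon(d_1+d_2,X_i)[D_1D_2]_i$ and $\varepsilon(d_1+d_2,X_i)\varepsilon(d_1,d_2)[D_2D_1]_i$ coincide, so the defect cancels slot by slot and $R=0$. The genuine obstacle here is the sign bookkeeping, namely checking that relocating the two composite operators produces exactly the factor $\varepsilon(d_1,d_2)$ relating the two contributions; the case $n=2$, where there is no spare slot to route through, must instead be settled by the direct two-slot cancellation, expanding the four products and moving each inner operator by quasicentroidality.

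For the second inclusion, take $D_1\in QDer_{\alpha^k}(L)$ with associated endomorphism $D_1'$ and $D_2\in QC_{\alpha^k}(L)$, both of degree $d$, and put $D=D_1+D_2$. I would exhibit the required $(n+1)$-tuple outright: $D^{(0)}=D_1+D_2$, $D^{(1)}=D_1-D_2$, $D^{(i)}=D_1$ for $2\le i\le n-1$, and $D^{(n)}=D_1'$. Feeding this into the right-hand side of the generalized-derivation identity and collecting the $D_1$- and $D_2$-contributions, the $D_1$-part is $\sum_i\varepsilon(d,X_i)[\dots,D_1(x_i),\dots]=D_1'([x_1,\dots,x_n])$ by the quasiderivation identity, while the only surviving $D_2$-terms come from slots $1$ and $2$ and equal $[D_2(x_1),\alpha^k(x_2),\dots,\alpha^k(x_n)]$ and $-\varepsilon(d,x_1)[\alpha^k(x_1),D_2(x_2),\dots]$; these are equal and opposite by the quasicentroid relation for $D_2$, so they cancel and the identity holds with $D^{(n)}=D_1'$, giving $D\in GDer_{\alpha^k}(L)$. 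Beyond this algebra one only has to note that every map in the tuple commutes with $\alpha$, a standing property of $D_1$ and $D_2$, and for $D_1'$ a consequence of applying $\alpha$ to the quasiderivation identity and using multiplicativity; the general inclusion then follows by grading and linearity.
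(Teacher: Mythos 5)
Your proposal is correct and takes essentially the same route as the paper: for part 1) the paper likewise evaluates $[D_1D_2(x_1),\alpha^{k+s}(x_2),\dots,\alpha^{k+s}(x_n)]$ in two ways by transporting $D_1$ and $D_2$ between slots with the quasicentroid relation, concludes that each slot's contribution to the defect of $[D_1,D_2]$ vanishes so that $D'\equiv 0$, and for part 2) it exhibits exactly your tuple $D=D_1+D_2$, $D'=D_1-D_2$, $D^{(i)}=D_1$ ($2\le i\le n-1$), $D^{(n)}=D_1'$ with the same slot-$1$/slot-$2$ cancellation of the $D_2$-terms. Your added remarks on the $n=2$ edge case and on $D_1'$ commuting with $\alpha$ are sensible refinements the paper leaves implicit, but they do not alter the argument.
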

\begin{proof}
 For any $x_1, x_2, \dots, x_n\in\mathcal{H}(L)$,
\begin{enumerate}
 \item [1)] let $D_1\in QC_{\alpha^k}(L)$ and $D_2\in QC_{\alpha^s}(L)$. We have, on the one hand
\begin{eqnarray}
&&[D_1D_2(x_1), \alpha^{k+s}(x_2), \dots, \alpha^{k+s}(x_n)]\nonumber\\
&&=\varepsilon(D_1, D_2+X_i)[D_2(\alpha^{k}(x_1)), \alpha^{k+s}(x_2), \dots, D_1(\alpha^{s}(x_i)), \dots, \alpha^{k+s}(x_n)]\nonumber\\
&&=\varepsilon(D_1, D_2+X_i)\varepsilon(D_2, X_i)[\alpha^{k+s}(x_1), \dots, D_2D_1(x_i), \dots, \alpha^{k+s}(x_n)]\nonumber\\
&&=\varepsilon(D_1, D_2)\varepsilon(D_1+D_2, X_i)[\alpha^{k+s}(x_1), \dots, D_2D_1(x_i), \dots, \alpha^{k+s}(x_n)]\nonumber.
\end{eqnarray}
On the other hand,
\begin{eqnarray}
 &&[D_1D_2(x_1), \alpha^{k+s}(x_2), \dots, \alpha^{k+s}(x_n)]=\nonumber\\
&&=\varepsilon(D_1, D_2+x_1)[D_2(\alpha^{k}(x_1)), D_1(\alpha^{s}(x_2)), \dots, \alpha^{k+s}(x_i), \dots, \alpha^{k+s}(x_n)]\nonumber\\
&&=\varepsilon(D_1, D_2+x_1)\varepsilon(D_2, D_1+X_i)
\nonumber \\
&& \hspace{3cm}
[\alpha^{k+s}(x_1), D_1(\alpha^{s}(x_2)), \dots, D_2(\alpha^{k}(x_i)), \dots, \alpha^{k+s}(x_n)]\nonumber\\
&&=\varepsilon(D_1, x_1)\varepsilon(D_2, X_i)\varepsilon(x_1, D_1)
\nonumber \\
&&
[D_1(\alpha^{s}(x_1)), \alpha^{k+s}(x_2), \dots, D_2(\alpha^{k}(x_i)), \dots, \alpha^{k+s}(x_n)]\nonumber\\
&&=\varepsilon(D_2, X_i)\varepsilon(D_1, X_i)[\alpha^{k+s}(x_1), \dots, D_1D_2(x_i), \dots, \alpha^{k+s}(x_n)]\nonumber\\
&&=\varepsilon(D_1+D_2, X_i)[\alpha^{k+s}(x_1), \dots, D_1D_2(x_i), \dots, \alpha^{k+s}(x_n)],\nonumber
\end{eqnarray}
and so
\begin{eqnarray}
&&\varepsilon(D_1+D_2, X_i)[\alpha^{k+s}(x_1), \dots, [D_1, D_2](x_i), \dots, \alpha^{k+s}(x_n)]=\nonumber\\
&&=\varepsilon(D_1+D_2, X_i)\Big([\alpha^{k+s}(x_1), \dots, D_1D_2(x_i), \dots, \alpha^{k+s}(x_n)]\nonumber\\
&&\quad-\varepsilon(D_1, D_2)[\alpha^{k+s}(x_1), \dots, D_2D_1(x_i), \dots, \alpha^{k+s}(x_n)]\Big)=0.\nonumber
\end{eqnarray}
It follows that
 \begin{eqnarray}
  \sum_{i=1}^n\varepsilon(D_1+D_2, X_i)[\alpha^{k+s}(x_1), \dots, [D_1, D_2](x_i), \dots, \alpha^{k+s}(x_n)]=0\nonumber.
 \end{eqnarray}
Therefore $D'\equiv 0$, and $[D_1, D_2]\in QDer(L)$.
\item [2)] Let $D_1\in QDer_{\alpha^k}(L)$ and $D_2\in QC_{\alpha^k}(L)$ with $|D_1|=|D_2|$. \\
Then there exists $D'_1\in End(L)$ such that
\begin{multline}
D'_1([x_1, \dots, x_n])
=\sum_{i=1}^n \varepsilon(D_1, X_i)[\alpha^{k}(x_1), \dots, D_1(x_i), \dots, \alpha^{k}(x_n)]\nonumber\\
=[D_1(x_1), \alpha^{k}(x_2), \dots, \alpha^{k}(x_n)]+\varepsilon(D_1, x_1)[\alpha^{k}(x_1), D_1(x_2), \dots, \alpha^{k}(x_n)]\nonumber\\
+\sum_{i=3}^n\varepsilon(D_1, X_i)[\alpha^{k}(x_1), \dots, D_1(x_i), \dots, \alpha^{k}(x_n)]\nonumber\\
=[(D_1+D_2)(x_1), \alpha^{k}(x_2), \dots, \alpha^{k}(x_n)]-[D_2(x_1), \alpha^{k}(x_2), \dots, \alpha^{k}(x_n)]\nonumber\\
+\varepsilon(D_1, x_1)[\alpha^{k}(x_1), D_1(x_2), \dots, \alpha^{k}(x_n)]
\nonumber \\
+ \sum_{i=3}^n\varepsilon(D_1, X_i)[\alpha^{k}(x_1), \dots, D_1(x_i), \dots, \alpha^{k}(x_n)]\nonumber\\
=[(D_1+D_2)(x_1), \alpha^{k}(x_2), \dots, \alpha^{k}(x_n)]-\varepsilon(D_2, x_1)[\alpha^{k}(x_1), D_2(x_2), \dots, \alpha^{k}(x_n)]\nonumber\\
+\varepsilon(D_1, x_1)[\alpha^{k}(x_1), D_1(x_2), \dots, \alpha^{k}(x_n)]
\nonumber \\
 \qquad + \sum_{i=3}^n\varepsilon(D_1, X_i)[\alpha^{k}(x_1), \dots, D_1(x_i), \dots, \alpha^{k}(x_n)]\nonumber\\
=[(D_1+D_2)(x_1), \alpha^{k}(x_2), \dots, \alpha^{k}(x_n)]
\nonumber\\
+\varepsilon(D_2, x_1)[\alpha^{k}(x_1), (D_1-D_2)(x_2), \dots, \alpha^{k}(x_n)]
\nonumber\\
+\sum_{i=3}^n\varepsilon(D_1, X_i)[\alpha^{k}(x_1), \dots, D_1(x_i), \dots, \alpha^{k}(x_n)]\nonumber.
\end{multline}
\end{enumerate}
The conclusion follows by taking
$$D^{(n)}=D'_1,\quad D=D_1+D_2,\quad D'=D_1-D_2,\quad D^{(i)}=D_1,\quad 2\leq i\leq n-1.$$
This proved that $D_1+D_2\in GDe(L)$.
 \end{proof}

\begin{proposition}
If $(L, [\cdot, \dots, \cdot], \varepsilon, \alpha)$ is a multiplicative $n$-Hom-Lie color algebra, then
$$QC(L)+[QC(L), QC(L)]$$
 is a color Hom-subalgebra of $GDer(L)$.
\end{proposition}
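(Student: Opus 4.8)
The plan is to verify directly the three defining properties of a color Hom-subalgebra of $(GDer(L),[\cdot,\cdot],\omega)$ for the graded subspace $W:=QC(L)+[QC(L),QC(L)]$, namely $W\subseteq GDer(L)$, $\omega(W)\subseteq W$, and $[W,W]\subseteq W$. The containment is immediate from Lemma \ref{BakayokoSilvestrov:lm}: part $1)$ gives $[QC(L),QC(L)]\subseteq QDer(L)$, so $W\subseteq QC(L)+QDer(L)$, and part $2)$ gives $QC(L)+QDer(L)\subseteq GDer(L)$; since $QC(L)$ and $[QC(L),QC(L)]$ are graded, so is their sum.

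For the $\omega$-closure I would first show $\omega(QC(L))\subseteq QC(L)$. For $D\in QC_{\alpha^k}(L)$ one has $\omega(D)=D\circ\alpha=\alpha\circ D$ (the operators considered commute with $\alpha$, as in the definitions of $Der$ and $C$ and as already used in the proof of the preceding proposition); applying the defining slot-independence relation of $D$ to the arguments $\alpha(x_1),\dots,\alpha(x_n)$ and using multiplicativity of $\alpha$ then yields $\omega(D)\in QC_{\alpha^{k+1}}(L)$. Next, for $D_1,D_2\in QC(L)$, the identity $\omega([D_1,D_2])=[D_1,D_2]\alpha=[\omega(D_1),D_2]$ (again using $D_2\alpha=\alpha D_2$) gives $\omega([QC(L),QC(L)])\subseteq[QC(L),QC(L)]$. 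Hence $\omega(W)\subseteq QC(L)+[QC(L),QC(L)]=W$.

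The real work is $[W,W]\subseteq W$. Expanding bilinearly, $[W,W]$ is spanned by $[QC,QC]$, $[QC,[QC,QC]]$, $[[QC,QC],QC]$ and $[[QC,QC],[QC,QC]]$, where I abbreviate $QC=QC(L)$. The first lies in $W$ by definition, and the third equals the second up to the factor coming from $\varepsilon$-antisymmetry of the color commutator. My plan is to reduce the remaining terms to the single key inclusion $[QC,[QC,QC]]\subseteq W$ (call it $(\ast)$), using the graded Jacobi identity of the color commutator on the associative algebra $End(L)$, of the form $[A,[B,F]]=[[A,B],F]+\varepsilon(a,b)[B,[A,F]]$. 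Indeed, once $(\ast)$ holds, $[QC,W]=[QC,QC]+[QC,[QC,QC]]\subseteq W$; and for $E=[A,B]\in[QC,QC]$ and $w\in W$, Jacobi gives $[E,w]=[A,[B,w]]-\varepsilon(a,b)[B,[A,w]]$ with $[B,w],[A,w]\in[QC,W]\subseteq W$, so $[A,[B,w]],[B,[A,w]]\in[QC,W]\subseteq W$ and thus $[E,w]\in W$. Therefore $[W,W]\subseteq[QC,W]+[[QC,QC],W]\subseteq W$.

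It remains to prove $(\ast)$, and this is the step I expect to be the main obstacle. Here I would argue at the level of elements: for $A\in QC(L)$ and $E\in[QC(L),QC(L)]$ I would feed into $[A,E]=AE-\varepsilon(a,e)EA$ both the slot-shifting relation defining $A$ as a quasicentroid element and the extra information recorded inside the proof of Lemma \ref{BakayokoSilvestrov:lm}, namely that every such $E$ is a quasiderivation with \emph{vanishing} associated endomorphism, i.e. $\sum_{i}\varepsilon(e,X_i)[\alpha^{k+s}(x_1),\dots,E(x_i),\dots,\alpha^{k+s}(x_n)]=0$. Tracking the bicharacter factors as the operator is shifted between the slots, one should find that $[A,E]$ again satisfies the quasicentroid relation up to a remainder that is itself a commutator of quasicentroid elements, whence $[A,E]\in QC(L)+[QC(L),QC(L)]=W$. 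The delicate point is precisely the bookkeeping of the $\varepsilon$-signs under these slot shifts, in the same spirit as the two-sided computation already carried out in Lemma \ref{BakayokoSilvestrov:lm}; everything else is formal.
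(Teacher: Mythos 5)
Your treatment of the containment $W\subseteq GDer(L)$ and of the $\omega$-closure is fine and matches what Lemma \ref{BakayokoSilvestrov:lm} provides. The problem is the closure under the bracket: you correctly isolate the key inclusion $(\ast)\colon [QC(L),[QC(L),QC(L)]]\subseteq W$ as the crux, but you do not prove it --- you only announce that an element-level computation with $\varepsilon$-sign bookkeeping ``should'' show that $[A,E]$ satisfies the quasicentroid relation up to a commutator remainder, and you yourself flag this as the main obstacle. As it stands, that is a genuine gap: the one nontrivial step of the whole proposition is left as a hope. It is also not clear that the computation you sketch would close up cleanly, because the information you propose to feed in (that $E\in[QC,QC]$ is a quasiderivation with vanishing associated endomorphism) controls the \emph{sum} over all slots, whereas the quasicentroid relation you want for $[A,E]$ is a slot-by-slot identity; extracting the latter from the former is exactly the part that needs an argument.

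The missing ingredient is the inclusion $[QDer(L),QC(L)]\subseteq QC(L)$, which the paper establishes as part 2) of Proposition \ref{BakayokoSilvestrov:dc2} (and which is the engine of Proposition 2.4 of \cite{ChapBaSilnhomliecolor:KP}, to which the paper's own one-line proof defers). With it, $(\ast)$ is immediate and you even land in the smaller space $QC(L)$: by Lemma \ref{BakayokoSilvestrov:lm}(1) one has $[QC,QC]\subseteq QDer$, hence $[QC,[QC,QC]]\subseteq[QC,QDer]\subseteq QC\subseteq W$ (up to the $\varepsilon$-antisymmetry factor, which is a scalar and harmless). The remaining piece $[[QC,QC],[QC,QC]]$ then follows from the color Jacobi identity in $End(L)$ exactly as you set it up: for $E=[A,B]$ with $A,B\in QC$ and $C,D\in QC$, $[E,[C,D]]=[[E,C],D]+\varepsilon(e,c)[C,[E,D]]$ with $[E,C],[E,D]\in[QDer,QC]\subseteq QC$, so both terms lie in $[QC,QC]\subseteq W$. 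I recommend you replace the unproven element-level sketch by an appeal to (or a proof of) $[QDer(L),QC(L)]\subseteq QC(L)$; without that, the proof is incomplete.
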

 \begin{proof}
 It follows from Lemma \ref{BakayokoSilvestrov:lm} by using the same arguments as in Proposition 2.4  in \cite{ChapBaSilnhomliecolor:KP}.
 \end{proof}

%
%

\begin{proposition}
 Let $(L, [\cdot, \dots, \cdot], \varepsilon, \alpha)$ is a multiplicative $n$-Hom-Lie color algebra such that $\alpha$ be a surjective mapping, then
$[C(L), QC(L)]\subseteq Hom(L, Z(L))$. Moreover, if $Z(L)=\{0\}$, then $[C(L), QC(L)]=\{0\}$.
\end{proposition}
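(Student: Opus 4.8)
The plan is to reduce the statement to a single bracket identity that collapses because of the bicharacter relation $\varepsilon(d_1,d_2)\varepsilon(d_2,d_1)=1$. By bilinearity of the color commutator and the $G$-grading, it suffices to take homogeneous $D_1\in C_{\alpha^k}(L)$ of degree $d_1$ and $D_2\in QC_{\alpha^s}(L)$ of degree $d_2$, and to prove that $[D_1,D_2](x)\in Z(L)$ for every $x\in\mathcal H(L)$. Since $\alpha$ is surjective, so is $\alpha^{k+s}$, and therefore every element of $L$ has the form $\alpha^{k+s}(x_j)$. In view of \eqref{BakayokoSilvestrov:z}, it is then enough to establish
$$[[D_1,D_2](x_1),\alpha^{k+s}(x_2),\dots,\alpha^{k+s}(x_n)]=0$$
for all $x_1,\dots,x_n\in\mathcal H(L)$.

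First I would expand $[D_1,D_2]=D_1D_2-\varepsilon(d_1,d_2)D_2D_1$ and treat the two resulting brackets separately. For the $D_1D_2$ term I would pull $D_1$ out of the first slot by the centroid property of $D_1$ (with trivial sign, since the first slot contributes $X_1=0$), obtaining $D_1\big([D_2(x_1),\alpha^{s}(x_2),\dots,\alpha^{s}(x_n)]\big)$, and then apply the quasicentroid property of $D_2$ to shift $D_2$ from the first to the second slot, which produces the factor $\varepsilon(d_2,x_1)$ and the bracket $D_1\big([\alpha^s(x_1),D_2(x_2),\alpha^s(x_3),\dots,\alpha^s(x_n)]\big)$. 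For the $D_2D_1$ term I would instead first use the quasicentroid property of $D_2$ to move $D_2$ from slot one to slot two, now picking up $\varepsilon(d_2,d_1+x_1)=\varepsilon(d_2,d_1)\varepsilon(d_2,x_1)$ because the first entry carries the extra degree $d_1$; then commute the outer $\alpha$-powers past $D_1$ and $D_2$ and pull $D_1$ out by the centroid property, landing on the very same bracket $D_1\big([\alpha^s(x_1),D_2(x_2),\dots,\alpha^s(x_n)]\big)$.

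Comparing the two outcomes, multiplying the second by $\varepsilon(d_1,d_2)$ introduces the factor $\varepsilon(d_1,d_2)\varepsilon(d_2,d_1)=1$ (bicharacter identity (i)), so the two contributions coincide and cancel in $D_1D_2-\varepsilon(d_1,d_2)D_2D_1$. Hence the target bracket vanishes, giving $[D_1,D_2](x_1)\in Z(L)$ and therefore $[C(L),QC(L)]\subseteq Hom(L,Z(L))$. The ``moreover'' clause is then immediate: if $Z(L)=\{0\}$ then $Hom(L,Z(L))=\{0\}$, whence $[C(L),QC(L)]=\{0\}$.

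The routine but delicate part will be the sign bookkeeping, that is, tracking the various $\varepsilon(d_i,\cdot)$ factors through each slot shift, and the step where I commute $\alpha^s$ and $\alpha^k$ past $D_1$ and $D_2$. The latter relies on the fact that elements of the centroid and quasicentroid commute with $\alpha$ (the relation $D\circ\alpha=\alpha\circ D$, implicit for these operators just as for the $\alpha^k$-derivations), which I would state explicitly, together with multiplicativity $\alpha^k\alpha^s=\alpha^{k+s}$ to keep the twisting powers aligned throughout.
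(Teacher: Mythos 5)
Your proposal is correct and follows essentially the same route as the paper: both reduce via surjectivity of $\alpha$ to showing $[[D_1,D_2](x_1),\alpha^{k+s}(y_2),\dots,\alpha^{k+s}(y_n)]=0$, handle the $D_1D_2$ term by extracting $D_1$ from the first slot via the centroid relation, handle the $D_2D_1$ term by the slot-two quasicentroid relation followed by the centroid relation, and cancel the two using $\varepsilon(d_1,d_2)\varepsilon(d_2,d_1+x_1)=\varepsilon(d_2,x_1)$. The only cosmetic difference is that the paper pushes both contributions back to the common form $D_1([D_2(x_1),\dots])$ whereas you leave $D_2$ in the second slot; the underlying identities and the cancellation are identical.
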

\begin{proof}
 Let $D_1\in C_{\alpha^k}(L)$,  $D_2\in QC_{\alpha^s}(L)$ and $x_1, \dots, x_n\in \mathcal{H}(L)$. Since $\alpha$ is surjective, for
any $y'_i\in L$, there exists $y_i\in L$ such that $y'_i=\alpha^{k+s}(y_i), i= 2, \dots, n$. Thus
\begin{eqnarray}
&& [[D_1, D_2](x_1), y'_2, \dots, y'_n]=\nonumber\\
&&=[[D_1, D_2](x_1), \alpha^{k+s}(y_2), \dots, \alpha^{k+s}(y_n)]\nonumber\\
&&=[D_1D_2(x_1), \alpha^{k+s}(y_2), \dots, \alpha^{k+s}(y_n)]
\nonumber\\
&& \qquad
-\varepsilon(d_1, d_2)[D_2D_1(x_1), \alpha^{k+s}(y_2), \dots, \alpha^{k+s}(y_n)]\nonumber\\
&&=D_1([D_2(x_1), \alpha^{s}(y_2), \dots, \alpha^{s}(y_n)])
\nonumber\\
&& \qquad
-\varepsilon(d_1, d_2)\varepsilon(d_2,x_1+ d_1)[D_1\alpha^{s}(x_1), D_2\alpha^{k}(y_2), \dots, \alpha^{k+s}(y_n)]\nonumber\\
&&=D_1([D_2(x_1), \alpha^{k+s}(y_2), \dots, \alpha^{k+s}(y_n)])
\nonumber\\
&& \qquad
-\varepsilon(d_2, x_1)D_1[\alpha^{s}(x_1), \alpha^{s}D_2(y_2), \dots, \alpha^{s}(y_n)]\nonumber\\
&&=D_1\Big([D_2(x_1), \alpha^{k+s}(y_2), \dots, \alpha^{k+s}(y_n)])
\nonumber\\
&& \qquad
-\varepsilon(d_2, x_1)[\alpha^{s}(x_1), \alpha^{s}D_2(y_2), \dots, \alpha^{s}(y_n)]\Big)\nonumber\\
&&=D_1\Big([D_2(x_1), \alpha^{k+s}(y_2), \dots, \alpha^{k+s}(y_n)])
-[D_2(x_1), \alpha^{s}(y_2), \dots, \alpha^{s}(y_n)]\Big)=0.\nonumber
\end{eqnarray}
Hence, $[D_1, D_2](x_1)\in Z(L)$, and $[D_1, D_2]\in Hom(L, Z(L))$. Furthermore, if $Z(L)=\{0\}$, we know that $[C(L), QC(L)]=\{0\}$.
 \end{proof}

\begin{proposition}
 Let $(L, [\cdot, \dots, \cdot], \varepsilon, \alpha)$ is a multiplicative $n$-Hom-Lie color algebra with surjective twisting $\alpha$
  and $H$ be a graded subset of $L$. Then
\begin{enumerate}
 \item [i)] $Z_L(H)$ is invariant under $C(L)$.
\item [ii)] Every perfect color Hom-ideal of $L$ is invariant under $C(L)$.
\end{enumerate}
\end{proposition}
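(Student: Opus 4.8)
The plan is to reduce both parts to the single structural identity that defines the centroid: for $D\in C_{\alpha^k}(L)$ of degree $d$, the map $D$ may be slid across a bracket at any slot at the cost of a nonzero bicharacter factor, $D([x_1,\dots,x_n])=\varepsilon(d,X_i)[\alpha^k(x_1),\dots,D(x_i),\dots,\alpha^k(x_n)]$. Since $\varepsilon$ is $\mathbb{K}^*$-valued, every such relation is invertible, and at the first slot one has $X_1=e$ and $\varepsilon(d,e)=1$, so $D([x_1,\dots,x_n])=[D(x_1),\alpha^k(x_2),\dots,\alpha^k(x_n)]$ with no twisting factor at all. I will also use that elements of the centroid commute with $\alpha$ (hence with $\alpha^k$) and that surjectivity of $\alpha$ forces $\alpha^k(L)=L$.

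For part (i), let $D\in C_{\alpha^k}(L)$ have degree $d$ and let $x\in Z_L(H)$. First I would apply the centroid identity at the first slot, putting $x$ there and elements $h_1,\dots,h_{n-1}\in H$ in the remaining slots, to obtain $[D(x),\alpha^k(h_1),\dots,\alpha^k(h_{n-1})]=D([x,h_1,\dots,h_{n-1}])=D(0)=0$, the inner bracket vanishing because $x$ centralizes $H$. This already shows that $D(x)$ annihilates every bracket whose non-$D(x)$ slots are filled from $\alpha^k(H)$. The remaining task is to pass from $\alpha^k(H)$ back to $H$, and this is exactly where I would invoke surjectivity of $\alpha$: since $\alpha^k$ is onto, an arbitrary $(n-1)$-tuple of centralizing arguments can be realized as $\alpha^k$-images, and rerunning the computation with these preimages (using $D\alpha^k=\alpha^k D$ to keep the slots aligned) yields $[D(x),g_1,\dots,g_{n-1}]=0$ for all $g_i\in H$, i.e. $D(x)\in Z_L(H)$.

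For part (ii), let $I$ be a perfect color Hom-ideal, so that $I=[I,\dots,I]$, and let $D\in C_{\alpha^k}(L)$. A homogeneous $y\in I$ is then a finite sum of brackets $[a_1,\dots,a_n]$ with every $a_j\in I$. Applying $D$ and the centroid identity at one slot turns each summand into $\varepsilon(d,X_i)[\alpha^k(a_1),\dots,D(a_i),\dots,\alpha^k(a_n)]$. Because $I$ is a Hom-ideal we have $\alpha(I)\subseteq I$ and hence $\alpha^k(a_j)\in I$, so each bracket carries at least $n-1\ge 1$ entries lying in $I$; by $\varepsilon$-skew-symmetry I can move one such entry into the first slot and apply the defining property $[I,L,\dots,L]\subseteq I$ of a Hom-ideal to conclude that every summand lies in $I$. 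Therefore $D(y)\in I$, so $I$ is invariant under $C(L)$. Here perfectness is precisely what makes the argument run: it lets me express the elements of $I$ as brackets of $I$-elements, after which the ideal property absorbs the single stray factor $D(a_i)$.

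The genuinely delicate point is part (i): the management of the $\alpha^k$-twists that appear when $D$ is slid out of the bracket. The direct computation produces the centralizing condition for $D(x)$ tested against $\alpha^k(H)$ rather than against $H$ itself, and it is exactly the surjectivity hypothesis on $\alpha$ that is needed to strip off these twists and recover the untwisted condition; making this matching rigorous — and keeping accurate track of the degrees $X_i$ entering the bicharacter — is where the real care lies. Part (ii), by contrast, is essentially routine once perfectness is exploited, the only subtleties being the correct bookkeeping of the bicharacter factor and the appeal to $\varepsilon$-skew-symmetry to reposition the $I$-entry before invoking the ideal property.
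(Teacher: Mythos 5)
Your proposal is correct and follows essentially the same route as the paper: in part (i) you apply the centroid identity to $\varphi([x,H,L,\dots,L])=0$ and use surjectivity of $\alpha$ to remove the $\alpha^k$-twists, and in part (ii) you write each element of the perfect Hom-ideal as a sum of brackets of its own elements, slide $D$ into one slot, and absorb the result using $\alpha(I)\subseteq I$ and $[I,L,\dots,L]\subseteq I$. The only cosmetic difference is that the paper places $D$ in the first slot from the start (so no bicharacter factor and no repositioning by $\varepsilon$-skew-symmetry is needed), whereas you allow an arbitrary slot and then permute.
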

\begin{proof}
\begin{enumerate}
 \item [i)]
 For any $\varphi\in C(L)$ and $x\in Z_L(H)$, by \eqref{BakayokoSilvestrov:z}, we have
\begin{eqnarray}
 0=\varphi([x, H, L, \dots, L])=[\varphi(x), \alpha^k(H), \alpha^k(L), \dots, \alpha^k(L)]=[\varphi(x), H, L, \dots, L].\nonumber
\end{eqnarray}
Therefore $\varphi(x)\in Z_L(H)$, which implies that $Z_L(H)$ is invariant under $C(L)$.
\item [ii)] Let $H$ be a perfect color Hom-ideal of $L$. Then $H^1=H$, and so for any $x\in H$ there exist $x_1^i, x_2^i, \dots, x_n^i\in H$ with
$0<i<\infty$ such that $x=\sum_i[x_1^i, x_2^i, \dots, x_n^i]$. If $\varphi\in C(L)$, then
\begin{eqnarray*}
\varphi(x) &=& \varphi(\sum_i[x_1^i, x_2^i, \dots, x_n^i])=\sum_i\varphi([x_1^i, x_2^i, \dots, x_n^i])
\nonumber \\
&=&
\sum_i[\varphi(x_1^i), \alpha^k(x_2^i), \dots, \alpha^k(x_n^i)])\in H.
\end{eqnarray*}
This shows that $H$ is invariant under $C(L)$.

\end{enumerate}
\end{proof}

\begin{proposition}\label{BakayokoSilvestrov:dc1}
 If the characteristic of $\mathbb{K}$ is $0$ or not a factor of $n-1$. Then
$$ZDer(L)=C(L)\cap Der(L).$$
\end{proposition}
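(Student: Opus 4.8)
The plan is to prove the two inclusions of the claimed equality separately, observing that the inclusion $ZDer(L)\subseteq C(L)\cap Der(L)$ holds over an arbitrary field, whereas the reverse inclusion is exactly where the hypothesis on the characteristic becomes essential.

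For the easy inclusion, I would start from $D\in ZDer_{\alpha^k}(L)$. By the defining condition of a central $\alpha^k$-derivation, both $D([x_1,\dots,x_n])=0$ and each individual summand $\varepsilon(d,X_i)[\alpha^k(x_1),\dots,D(x_i),\dots,\alpha^k(x_n)]=0$ vanish. Since the centroid identity and the derivation identity each equate $D([x_1,\dots,x_n])$ to expressions built solely out of such summands, both identities reduce to $0=0$; hence $D$ lies simultaneously in $C(L)$ and in $Der(L)$. This direction requires no arithmetic assumption on $\mathbb{K}$.

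For the reverse inclusion, suppose $D\in C_{\alpha^k}(L)\cap Der_{\alpha^k}(L)$. The centroid property asserts that for every index $i$,
\[
D([x_1,\dots,x_n])=\varepsilon(d,X_i)[\alpha^k(x_1),\dots,D(x_i),\dots,\alpha^k(x_n)],
\]
so in particular the right-hand side is independent of $i$ and equals $D([x_1,\dots,x_n])$. Substituting this back into the derivation identity, the sum over the $n$ slots collapses into $n$ identical copies of $D([x_1,\dots,x_n])$, which yields $D([x_1,\dots,x_n])=n\,D([x_1,\dots,x_n])$, i.e. $(n-1)D([x_1,\dots,x_n])=0$.

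The key step, and the one where the hypothesis is used, is to conclude $D([x_1,\dots,x_n])=0$ from $(n-1)D([x_1,\dots,x_n])=0$: this is legitimate precisely because $n-1$ is invertible in $\mathbb{K}$ when the characteristic is $0$ or does not divide $n-1$. Once $D([x_1,\dots,x_n])=0$ is established, feeding it back into the centroid identity and using that the bicharacter takes values in $\mathbb{K}^\ast$ forces each $[\alpha^k(x_1),\dots,D(x_i),\dots,\alpha^k(x_n)]=0$ as well; both defining conditions of $ZDer_{\alpha^k}(L)$ then hold, so $D\in ZDer(L)$. I expect no genuine difficulty beyond correctly bookkeeping the degrees appearing in the bicharacter arguments; the only truly essential ingredient is the invertibility of $n-1$, without which the collapsed relation $(n-1)D([x_1,\dots,x_n])=0$ would fail to force the bracket to vanish.
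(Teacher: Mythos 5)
Your proof is correct and follows essentially the same route as the paper: both directions are handled identically, with the reverse inclusion obtained by substituting the centroid identity into the derivation identity to get $\varphi([x_1,\dots,x_n])=n\,\varphi([x_1,\dots,x_n])$ and then invoking the invertibility of $n-1$. Your explicit remarks that the easy inclusion needs no hypothesis on the characteristic and that the bicharacter's values in $\mathbb{K}^\ast$ force each individual bracket to vanish are correct refinements of the same argument.
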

\begin{proof}
 If $\varphi\in C(L)\cap Der(L)$, then by \eqref{BakayokoSilvestrov:der}  we have
$$\varphi([x_1, \dots, x_n])=\sum_{i=1}^n\varepsilon(d, X_i)[\alpha^k(x_1), \dots, \varphi(x_i), \dots, \alpha^k(x_n)],$$
and by \eqref{BakayokoSilvestrov:cd}, for $i=1, 2, \dots, n$,
$$\varepsilon(d, X_i)[\alpha^k(x_1), \dots, \varphi(x_i), \dots, \alpha^k(x_n)]=\varphi([x_1, \dots, x_n]).$$
Thus
$\varphi([x_1, \dots, x_n])=n \varphi([x_1, \dots, x_n])$.
The characteristic of  $\mathbb{K}$ being $0$ or not a factor of $n-1$. We have
$$0=\varphi([x_1, \dots, x_n])=\varepsilon(d, X_i)[\alpha^k(x_1), \dots, \varphi(x_i), \dots, \alpha^k(x_n)], \quad i=1, 2, \dots, n.$$
Which means that $\varphi\in ZDer(L)$.\\
Conversly, let $\varphi\in ZDer(L)$, Then
$$\varphi([x_1, \dots, x_n])=\varepsilon(d, X_i)[\alpha^k(x_1), \dots, \varphi(x_i), \dots, \alpha^k(x_n)]=0, \quad 1\leq i\leq n$$
and thus $\varphi\in C(L)\cap Der(L)$. Therefore $ZDer(L)=C(L)\cap Der(L)$.
 \end{proof}

\begin{proposition}\label{BakayokoSilvestrov:dc2}
 Let $L$ be an $n$-Hom-Lie color algebra. For any $D\in Der(L)$ and $\varphi\in C(L)$
\begin{enumerate}
 \item[1)] $Der(L)$ is contained in the normalizer of $C(L)$ in $End(L)$ i.e.
$$[Der(L), C(L)]\subseteq C(L).$$
\item [2)] $QDer(L)$ is contained in the normalizer of $QC(L)$ in $End(L)$ i.e.
$$[QDer(L), QC(L)]\subseteq QC(L).$$
\end{enumerate}
\end{proposition}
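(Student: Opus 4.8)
The plan is to verify each inclusion directly from the defining identities, imitating the classical binary Lie-algebra argument that the derivation algebra normalizes the centroid (and quasiderivations normalize the quasicentroid), while carefully carrying the $\alpha$-twists and the bicharacter factors. Throughout I use that $D$, $\varphi$, $D'$ and $\psi$ commute with $\alpha$, which is what lets the $\alpha$-powers in the various bracket terms be aligned.

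For part 1), take $D\in Der_{\alpha^s}(L)$ of degree $d_1$ and $\varphi\in C_{\alpha^k}(L)$ of degree $d_2$, and set $[D,\varphi]=D\varphi-\varepsilon(d_1,d_2)\varphi D$, a map of degree $d_1+d_2$. I claim it lies in $C_{\alpha^{k+s}}(L)$; since the centroid identity holds in every slot, it suffices to check the first slot, namely $[D,\varphi]([x_1,\dots,x_n])=[[D,\varphi](x_1),\alpha^{k+s}(x_2),\dots,\alpha^{k+s}(x_n)]$. First I would expand $D\varphi([x_1,\dots,x_n])$ by pulling $\varphi$ into the first slot via its centroid property, obtaining $D([\varphi(x_1),\alpha^k(x_2),\dots,\alpha^k(x_n)])$, and then applying the $\alpha^s$-derivation rule for $D$ to produce a sum of $n$ terms. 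Next I would expand $\varphi D([x_1,\dots,x_n])$ in the opposite order: apply the derivation rule for $D$ first, then the centroid property of $\varphi$ to each of the $n$ summands, again pulling $\varphi$ into the first slot. Using $\alpha^s\varphi=\varphi\alpha^s$ and $D\alpha^k=\alpha^k D$, every slot $\geq 2$ then carries exactly $\alpha^{s+k}$ in both expansions, so for each $i\geq 2$ the two brackets coincide and differ only by the factor $\varepsilon(d_1,d_2)$; forming $D\varphi-\varepsilon(d_1,d_2)\varphi D$ makes these $n-1$ terms cancel pairwise. What survives is the first-slot contribution $[(D\varphi-\varepsilon(d_1,d_2)\varphi D)(x_1),\alpha^{s+k}(x_2),\dots,\alpha^{s+k}(x_n)]$, which is precisely the required centroid identity, so $[D,\varphi]\in C_{\alpha^{k+s}}(L)\subseteq C(L)$.

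For part 2), take $D\in QDer_{\alpha^s}(L)$ of degree $d_1$ with associated map $D'$ and $\psi\in QC_{\alpha^k}(L)$ of degree $d_2$; I must show $[D,\psi]\in QC_{\alpha^{k+s}}(L)$, i.e. that inserting $[D,\psi]$ into any single slot (with $\alpha^{k+s}$ in the remaining slots) gives, up to the bicharacter sign, a value independent of the slot. The engine is the quasicentroid relation for $\psi$, which says that $\varepsilon(d_2,X_j)[\alpha^k(x_1),\dots,\psi(x_j),\dots,\alpha^k(x_n)]$ is independent of $j$, together with the quasiderivation relation $D'([z_1,\dots,z_n])=\sum_i\varepsilon(d_1,Z_i)[\alpha^s(z_1),\dots,D(z_i),\dots,\alpha^s(z_n)]$, into which I am free to feed $\alpha$-twisted arguments so that the twist levels match. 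The key observation, exactly as in the binary case where $[\psi x,y]=[x,\psi y]$ forces $D'[\psi x,y]=D'[x,\psi y]$, is that the single element $A=[\psi(x_1),\alpha^k(x_2),\dots,\alpha^k(x_n)]$ equals $\varepsilon(d_2,X_j)[\alpha^k(x_1),\dots,\psi(x_j),\dots,\alpha^k(x_n)]$ for every $j$; applying $D'$ to $A$ and expanding each of these representations by the quasiderivation rule gives two sums that must agree. Equating them, the terms in which $D$ hits a pure-$\alpha$ slot are transported back into a common form by the quasicentroid relation and cancel, while the terms in which $D$ and $\psi$ meet in the same slot combine into $[D,\psi]$; this isolates $[D,\psi]$ in one slot and yields the slot-independence defining $QC_{\alpha^{k+s}}(L)$, hence $[D,\psi]\in QC(L)$.

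In both parts the conceptual content is short; the work — and the only real obstacle — is purely bookkeeping: keeping the factors $\varepsilon(d_1,X_i)$, $\varepsilon(d_2,X_i)$ and $\varepsilon(d_1,d_2)$ consistent as maps are slid across slots and composed, and ensuring the $\alpha$-powers match so that the cancellations are between genuinely identical bracket terms. The commutations of $D$, $D'$, $\varphi$ and $\psi$ with $\alpha$ are what make these alignments legitimate, and the $\varepsilon$-skew-symmetry of the bracket is what underlies the slot-to-slot transport rules used throughout.
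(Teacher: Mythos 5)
Your argument for part 1) is exactly the paper's computation — expand $D\varphi$ by pulling $\varphi$ into the first slot and then applying the derivation rule, match the $i\ge 2$ terms of $\varphi D$ up to the factor $\varepsilon(d_1,d_2)$ using $\varphi\alpha^s=\alpha^s\varphi$, and read off the surviving first-slot term — only organized as a term-by-term cancellation rather than re-assembling the sum as $\varepsilon(d,\varphi)\varphi D[x_1,\dots,x_n]$ minus its first term. For part 2) the paper merely says ``similar method,'' and your sketch (apply $D'$ to the two representations of $[\psi(x_1),\alpha^k(x_2),\dots,\alpha^k(x_n)]$ afforded by the quasicentroid relation and cancel the transported cross terms) is the standard argument it intends, so the proposal is correct and essentially coincides with the paper's proof.
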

\begin{proof}
1) For any $D\in Der(L), \varphi\in C(L)$ and $x_1, x_2, \dots, x_n\in\mathcal{H}(L)$,
 \begin{eqnarray}
&& D\varphi([x_1, \dots, x_n])=D([\varphi(x_1), \alpha^{k}(x_2), \dots, \alpha^{k}(x_i), \dots, \alpha^{k}(x_n)])\nonumber\\
&&=[D\varphi(x_1), \alpha^{k+s}(x_2), \dots, \alpha^{k+s}(x_i), \dots, \alpha^{k+s}(x_n)]\nonumber\\
&& \quad +\sum_{i=2}^n\varepsilon(d, \varphi+ X_i)[\alpha^{s}\varphi(x_1), \alpha^{k+s}(x_2), \dots, \alpha^{k}D(x_i), \dots, \alpha^{k+s}(x_n)]\nonumber\\
&&=[D\varphi(x_1), \alpha^{k+s}(x_2), \dots, \alpha^{k+s}(x_i), \dots, \alpha^{k+s}(x_n)]\nonumber\\
&& \quad +\sum_{i=2}^n\varepsilon(d,\varphi+ X_i)\varepsilon(\varphi, X_i)([\alpha^{k+s}(x_1), \alpha^{k+s}(x_2), \dots, \varphi D(x_i),
\dots, \alpha^{k+s}(x_n)])\nonumber\\
&=& [D\varphi(x_1), \alpha^{k+s}(x_2), \dots, \alpha^{k+s}(x_i), \dots, \alpha^{k+s}(x_n)]\nonumber\\
&& \quad +\varepsilon(d,\varphi)\sum_{i=2}^n\varepsilon(d+\varphi, X_i)([\alpha^{k+s}(x_1), \alpha^{k+s}(x_2), \dots, \varphi D(x_i),
\dots, \alpha^{k+s}(x_n)])\nonumber\\
&=&[D\varphi(x_1), \alpha^{k+s}(x_2), \dots, \alpha^{k+s}(x_i), \dots, \alpha^{k+s}(x_n)]\nonumber\\
&&\quad +\varepsilon(d,\varphi)\Big(\varphi D[x_1, x_2, \dots, x_i, \dots, x_n]\nonumber\\
&&\quad-[\varphi D(x_1), \alpha^{k+s}(x_2), \dots, \alpha^{k+s}(x_i), \dots, \alpha^{k+s}(x_n)]   \Big)\nonumber.
 \end{eqnarray}
Then we get
\begin{eqnarray}
&& (D\varphi-\varepsilon(d, \varphi)\varphi D)([x_1, \dots, x_n])
 \nonumber \\
&&\qquad  =[(D\varphi-\varepsilon(d, \varphi)\varphi D)(x_1), \dots, \alpha^{k+s}(x_2),
 \dots, \alpha^{k+s}(x_i), \dots, \alpha^{k+s}(x_n)])\nonumber,
\end{eqnarray}
that is
$[D, \varphi]=D\varphi-\varepsilon(d, \varphi)\varphi D\in C(L)$. \\
2) It is proved by using a similar method.
 \end{proof}

\begin{proposition}
 Let $L$ be an $n$-Hom-Lie color algebra. For any $D\in Der(L)$ and $\varphi\in C(L)$
\begin{enumerate}
\item [1)] $D\varphi$ is contained in $C(L)$ if and only if $\varphi D$ is a central derivation of $L$.
\item [2)] $D\varphi$ is a derivation of $L$ if and only if $[D, \varphi]$ is a central derivation of $L$.
\end{enumerate}
\end{proposition}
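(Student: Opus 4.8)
The plan is to reduce everything to three facts already established in the paper, together with the single algebraic identity coming from the definition of the color commutator. First I would record that, by the proposition asserting $C(L)\cdot Der(L)\subseteq Der(L)$, the composite $\varphi D$ is \emph{always} a derivation (of degree $d+|\varphi|$ and twisting power $\alpha^{k+s}$). Next, by Proposition~\ref{BakayokoSilvestrov:dc2}(1), the color commutator $[D,\varphi]$ is \emph{always} in the centroid $C(L)$. Finally, by Proposition~\ref{BakayokoSilvestrov:dc1} (under the standing characteristic hypothesis), we have $ZDer(L)=C(L)\cap Der(L)$. The key bookkeeping identity is obtained by unravelling the color commutator: since $[D,\varphi]=D\varphi-\varepsilon(d,\varphi)\varphi D$, one has
\[
D\varphi=[D,\varphi]+\varepsilon(d,\varphi)\,\varphi D ,
\]
where $\varepsilon(d,\varphi)\in\mathbb{K}^{*}$ is an invertible scalar, so multiplying an endomorphism by it preserves membership in any of the linear subspaces $C(L),\,Der(L),\,ZDer(L)$.

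For part~1), I would argue as follows. Because $\varphi D\in Der(L)$ unconditionally, the characterisation $ZDer(L)=C(L)\cap Der(L)$ shows that the condition ``$\varphi D$ is a central derivation'' is equivalent to the single condition $\varphi D\in C(L)$. Now feed this into the displayed identity: since $[D,\varphi]\in C(L)$ always, the term $D\varphi$ lies in $C(L)$ if and only if the remaining summand $\varepsilon(d,\varphi)\varphi D$ lies in $C(L)$, i.e. if and only if $\varphi D\in C(L)$, i.e. if and only if $\varphi D\in ZDer(L)$. Reading the equivalences in both directions settles part~1).

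For part~2), I would exploit the complementary grouping of the same identity. Here $[D,\varphi]\in C(L)$ is automatic, so ``$[D,\varphi]$ is a central derivation'' is equivalent to the single condition $[D,\varphi]\in Der(L)$, again using $ZDer(L)=C(L)\cap Der(L)$. Since $\varphi D\in Der(L)$ always and $Der(L)$ is a linear subspace stable under multiplication by the scalar $\varepsilon(d,\varphi)$, the identity $D\varphi=[D,\varphi]+\varepsilon(d,\varphi)\varphi D$ shows that $D\varphi\in Der(L)$ if and only if $[D,\varphi]\in Der(L)$, which is exactly the desired equivalence.

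The calculations themselves are entirely routine once the three cited results are in hand; there is no heavy computation, only the rearrangement of the commutator identity. The only point that genuinely requires care is the degree and twisting-exponent bookkeeping: I must keep track that $\varphi D$, $D\varphi$ and $[D,\varphi]$ all carry degree $d+|\varphi|$ and twisting power $\alpha^{k+s}$, so that they live in the same homogeneous components and the scalar appearing in the commutator really is $\varepsilon(d,\varphi)$; and I must flag the implicit reliance on the characteristic hypothesis of Proposition~\ref{BakayokoSilvestrov:dc1}, since the identity $ZDer(L)=C(L)\cap Der(L)$ is precisely the bridge that converts each ``central derivation'' clause into a membership in $C(L)\cap Der(L)$. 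That equivalence is the crux of both parts and the one place where the argument is not purely formal.
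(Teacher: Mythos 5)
Your proposal is correct and follows essentially the same route as the paper's (much terser) proof: both rest on the commutator identity $D\varphi=[D,\varphi]+\varepsilon(d,\varphi)\varphi D$ together with $[Der(L),C(L)]\subseteq C(L)$, $C(L)\cdot Der(L)\subseteq Der(L)$, and the characterisation $ZDer(L)=C(L)\cap Der(L)$. Your write-up is more explicit than the paper's, and you rightly flag the implicit dependence on the characteristic hypothesis needed for $ZDer(L)=C(L)\cap Der(L)$, which the proposition's statement omits.
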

\begin{proof}
1) From Proposition \ref{BakayokoSilvestrov:dc2}, $D\varphi$ is an element of $C(L)$ if and only if $\varphi D\in  Der(L)\cap C(L)$.
Thanks to Proposition \ref{BakayokoSilvestrov:dc1}, we get the result. \\
2) The conclusion follows from $1)$, Proposition \ref{BakayokoSilvestrov:dc1} and Proposition \ref{BakayokoSilvestrov:dc2}.
 \end{proof}

If $A$ is a commutative associative algebra and $L$ is an $n$-Hom-Lie color algebra, the $n$-Hom-Lie algebra $A\otimes L$ (Theorem \ref{BakayokoSilvestrov:tp})
 is called \index{Tensor product!n-Hom-Lie color algebra} the tensor product $n$-Hom-Lie color algebra of $A$ and $L$. For $f\in End(A)$ and $\varphi\in End(L)$ let
$f\otimes \varphi : A\otimes L\rightarrow A\otimes L$ be given by $f\otimes \varphi(a\otimes x)=f(a)\otimes\varphi(x)$, for $a\in A, x\in L$.
Then $f\otimes\varphi\in End(A\otimes L)$.

Recall that if $A$ is a commutative associative algebra, the centroid $C(A)$ of $A$ is by definition
$$C(A)=\{f\in End(A) \mid  f(ab)=f(a)b=af(b), \forall a, b\in A\}.$$
We now state the following proposition.
\begin{proposition}
With the above notation, we have
$$C(A)\otimes C(L)\subseteq C(A\otimes L).$$
\end{proposition}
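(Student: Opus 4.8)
The plan is to verify directly that every elementary tensor $f\otimes\varphi$ with $f\in C(A)$ and $\varphi\in C_{\alpha^k}(L)$ satisfies the defining identity of the centroid $C_{(\alpha')^k}(A\otimes L)$ of the tensor product algebra $(A\otimes L,[\cdot,\dots,\cdot]',\varepsilon,\alpha')$ produced by Theorem \ref{BakayokoSilvestrov:tp}; the general inclusion then follows by bilinearity, since $C(A\otimes L)$ is a linear space and an arbitrary element of $C(A)\otimes C(L)$ is a finite sum of such elementary tensors. First I would record the two ingredients. Iterating the centroid relation $f(ab)=f(a)b=af(b)$ and using commutativity and associativity of $A$ gives, for every index $i$,
$$f(a_1\cdots a_n)=a_1\cdots a_{i-1}\,f(a_i)\,a_{i+1}\cdots a_n.$$
Writing $d$ for the degree of $\varphi$, the hypothesis $\varphi\in C_{\alpha^k}(L)$ says that for each $i=1,\dots,n$,
$$\varphi([x_1,\dots,x_n])=\varepsilon(d,X_i)[\alpha^k(x_1),\dots,\varphi(x_i),\dots,\alpha^k(x_n)].$$

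Next I would fix the grading data. Because the grading of $A\otimes L$ is inherited entirely from $L$ via $(A\otimes L)_g=A\otimes L_g$ and the bicharacter is $\varepsilon(a+x,b+y)=\varepsilon(x,y)$, the element $a\otimes x$ has degree equal to that of $x$, so $f\otimes\varphi$ is homogeneous of degree $d$, and $(\alpha')^k(a\otimes x)=a\otimes\alpha^k(x)$. With this in hand the check is a single chain of equalities. Using the bracket of Theorem \ref{BakayokoSilvestrov:tp},
$$(f\otimes\varphi)([a_1\otimes x_1,\dots,a_n\otimes x_n]')=f(a_1\cdots a_n)\otimes\varphi([x_1,\dots,x_n]),$$
and then substituting the two displayed identities for one fixed $i$ yields
$$\varepsilon(d,X_i)\,\big(a_1\cdots a_{i-1}f(a_i)a_{i+1}\cdots a_n\big)\otimes[\alpha^k(x_1),\dots,\varphi(x_i),\dots,\alpha^k(x_n)],$$
which is precisely $\varepsilon(d,X_i)[(\alpha')^k(a_1\otimes x_1),\dots,(f\otimes\varphi)(a_i\otimes x_i),\dots,(\alpha')^k(a_n\otimes x_n)]'$.

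Since this holds for every $i$, the map $f\otimes\varphi$ lies in $C_{(\alpha')^k}(A\otimes L)$, and summing over elementary tensors gives $C(A)\otimes C(L)\subseteq C(A\otimes L)$. The computation is essentially mechanical, so I do not anticipate a serious obstacle; the only point demanding care is the grading bookkeeping. One must confirm that the $X_i$ appearing in the centroid identity for $A\otimes L$ — the sum of the degrees of $a_1\otimes x_1,\dots,a_{i-1}\otimes x_{i-1}$ — agrees with the $X_i$ from the identity for $\varphi$ in $L$. This holds exactly because $A$ contributes trivial degree, so no spurious bicharacter factor is introduced when $f\otimes\varphi$ acts in the $i$-th slot. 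I would also note that the centroid definition imposes no compatibility with the twisting map, so no condition relating $f$ to $\alpha'$ is needed.
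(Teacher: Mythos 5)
Your proposal is correct and follows essentially the same route as the paper: a direct verification on elementary tensors $f\otimes\varphi$, combining the iterated centroid identity $f(a_1\cdots a_n)=a_1\cdots f(a_i)\cdots a_n$ in the commutative associative algebra $A$ with the $\alpha^k$-centroid identity for $\varphi$ in $L$, then concluding $f\otimes\varphi\in C(A\otimes L)$. Your additional remarks on the grading bookkeeping (degree of $a\otimes x$ coming only from $x$) and on extending from elementary tensors by linearity are points the paper leaves implicit, but they do not change the argument.
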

\begin{proof}
For any $a_i\in A, x_i\in\mathcal{H}(L), 1\leq i\leq n$, and any $f\in C(A)$ and $\varphi\in C(L),$
\begin{eqnarray}
&& (f\otimes\varphi)[a_1\otimes x_1, \dots, a_n\otimes x_n]=(f\otimes \varphi)(a_1\dots a_n)\otimes[x_1, \dots, x_n]\nonumber\\
&&=f(a_1\dots a_n)\otimes\varphi[x_1, \dots, x_n]\nonumber\\
&&=\varepsilon(\varphi, X_i)a_1\dots f(a_i)\dots a_n\otimes [\alpha^{k}(x_1), \dots, \varphi(x_i), \dots, \alpha^{k}(x_n)]\nonumber\\
&&=\varepsilon(\varphi, X_i)[a_1\otimes \alpha^{k}(x_1)\dots f(a_i)\otimes\varphi(x_i), \dots, a_n\otimes \alpha^{k}(x_n)]\nonumber\\
&&=\varepsilon(\varphi, X_i)[\alpha'^{k}(a_1\otimes x_1)\dots (f\otimes\varphi)( a_i\otimes x_i), \dots, \alpha'^{k}(a_n\otimes x_n)]\nonumber.
\end{eqnarray}
Therefore, $f\otimes\varphi\in C(A\otimes L)$.
 \end{proof}


\section*{Acknowlegments} Dr. Ibrahima Bakayoko is grateful to the research environment in Mathematics and
Applied Mathematics MAM, the Division of Applied Mathematics of the School of Education, Culture and Communication at M{\"a}lardalen University for hospitality and an excellent and inspiring environment for research and research education and cooperation in Mathematics during his visit, in the framework of expanding research and research education capacity and cooperation development in Africa and impact of the programs in Mathematics between Sweden and countries in Africa supported by Swedish International Development Agency (Sida) and International Program in Mathematical Sciences (IPMS).


\begin{thebibliography}{99}
\bibitem{ChapBaSilnhomliecolor:Abramov}
Abramov, V.: Super 3-Lie algebras induced by super Lie algebras. Adv. Appl. Clifford Algebr. 27, no. 1, 9-16 (2017)

\bibitem{ChapBaSilnhomliecolor:AizawaSaito}
Aizawa, N., Sato, H.: $q$-deformation of the Virasoro algebra with central extension, Phys. Lett. B \textbf{256}, 185-190 (1991) (Hiroshima University preprint, preprint HUPD-9012 (1990))

\bibitem{ChapBaSilnhomliecolor:homdeformation}
Ammar, F., Ejbehi, Z., Makhlouf, A.: Cohomology and deformations of Hom-algebras,  J. Lie Theory \textbf{21}, no. 4, 813–836 (2011)

\bibitem{ChapBaSilnhomliecolor:naryhomrep}
Ammar, F., Mabrouk, S., Makhlouf, A.: Representation and cohomology of $n$-ary multiplicative Hom-Nambu-Lie algebras,  J. Geom. Phys. \textbf{61}, 1898–1913 (2011)

\bibitem{ChapBaSilnhomliecolor:envelopalgcolhomLiealg}
Armakan, A., Silvestrov, S., Farhangdoost, M.: Enveloping algebras of color hom-Lie algebras, Turk. J. Math. \textbf{43}, 316-339 (2019)
doi:10.3906/mat-1808-96. arXiv:1709.06164 [math.QA]

\bibitem{ChapBaSilnhomliecolor:exthomLiecoloralg}
Armakan, A., Silvestrov, S., Farhangdoost, M.: Extensions of hom-Lie color algebras, To appear in Georgian Mathematical Journal, doi:10.1515/gmj-2019-2033.
arXiv:1709.08620 [math.QA]

\bibitem{ChapBaSilnhomliecolor:akms:ternary}
Arnlind, J., Kitouni, A., Makhlouf, A., Silvestrov, S.:
Structure and Cohomology of $3$-Lie algebras induced by Lie algebras, in Algebra, Geometry and Mathematical Physics, Springer proceedings in Mathematics and $\&$ Statistics, vol \textbf{85} (2014)

\bibitem{ChapBaSilnhomliecolor:ams:ternary}
Arnlind, J., Makhlouf, A., Silvestrov, S.:
Ternary Hom-Nambu-Lie algebras induced by Hom-Lie algebras, J. Math. Phys. \textbf{51}, 043515, 11 pp. (2010)

\bibitem{ChapBaSilnhomliecolor:ams:n}
Arnlind, J., Makhlouf, A., Silvestrov, S.:
Construction of $n$-Lie algebras and $n$-ary Hom-Nambu-Lie algebras, J. Math. Phys. \textbf{52}, 123502, 13 pp. (2011)

\bibitem{ChapBaSilnhomliecolor:AtMaSi:GenNambuAlg}
Ataguema, H.,  Makhlouf, A., Silvestrov, S.: Generalization of n-ary Nambu
algebras and beyond. J. Math. Phys. 50, 083501 (2009) 

\bibitem{ChapBaSilnhomliecolor:almy:quantnambu}
Awata, H., Li, M., Minic, D., Yoneya, T.: On the quantization of Nambu brackets,
J. High Energy Phys. \textbf{2}, Paper 13, 17 pp. (2001)

\bibitem{ChapBaSilnhomliecolor:Bai:rlz3}
Bai, R., Bai, C., Wang, J.: Realizations of $3$-Lie algebras, Journal of Mathematical Physics \textbf{51}, 063505 (2010)

\bibitem{ChapBaSilnhomliecolor:Bai:n}
Bai, R., Wu, Y., Li, J., Zhou, H.: Constructing $(n+1)$-Lie algebras from $n$-Lie algebras, J. Phys. A \textbf{45}, no. 47 (2012)

\bibitem{ChapBaSilnhomliecolor:Bai:nLie:clas}
Bai, R., Song, G., Zhang, Y.: On classification of $n$-Lie algebras, Front. Math. China \textbf{6}, 581-606 (2011)

\bibitem{ChapBaSilnhomliecolor:Bai:nLie:claschar2}
Bai, R., Wang, X., Xiao, W., An, H.:
The structure of low dimensional $n$-Lie algebras over the field of characteristic $2$, Linear Algebra Appl. \textbf{428} (8–9), 1912-1920 (2008)

\bibitem{ChapBaSilnhomliecolor:RL}
Bai, R., Chen, L., Meng, D.: The Frattini subalgebra of n-Lie algebras, Acta Math. Sinica, English Series, 23 (5) 847-856 (2007)

\bibitem{ChapBaSilnhomliecolor:RM1}
Bai, R., Meng, D.: The central extension of n-Lie algebras, Chinese Ann. Math. 27 (4) 491-502 (2006)

\bibitem{ChapBaSilnhomliecolor:RM2}
Bai, R., Meng, D.: The centroid of n-Lie algebras, Algebras Groups Geom. 25 (2) 29-38 (2004)

\bibitem{ChapBaSilnhomliecolor:RB}
Bai, R., Zhang, Z., Li, H., Shi, H.: The inner derivation algebras of (n+1)-dimensional n-Lie algebras, Comm. Algebra, 28 (6) 2927-2934 (2000)

\bibitem{ChapBaSilnhomliecolor:IBLaplacehomLiequasibialg}
Bakayoko, I.: Laplacian of Hom-Lie quasi-bialgebras, International Journal of Algebra, \textbf{8} (15), 713-727 (2014)

\bibitem{ChapBaSilnhomliecolor:IBLmodcomodhomLiequasibialg}
Bakayoko, I.: L-modules, L-comodules and Hom-Lie quasi-bialgebras, African Diaspora Journal of Mathematics, Vol 17 49-64 (2014)

\bibitem{ChapBaSilnhomliecolor:Hombiliform}
Benayadi, S., Makhlouf, A.: Hom-Lie algebras with symmetric invariant nondegenerate bilinear forms, J. Geom. Phys. \textbf{76}, 38–60 (2014)

\bibitem{ChapBaSilnhomliecolor:JM}
Casas, J. M., Loday, J.-L., Pirashvili, T.: Leibniz $n$-algebras, Forum Math. 14, 189-207 (2002)

\bibitem{ChapBaSilnhomliecolor:ChaiElinPop}
Chaichian, M., Ellinas, D., Popowicz, Z.: Quantum conformal algebra with central extension, Phys. Lett. B \textbf{248}, 95-99 (1990)

\bibitem{ChapBaSilnhomliecolor:ChaiIsLukPopPresn}
Chaichian, M., Isaev, A. P., Lukierski, J., Popowic, Z., Pre\v{s}najder, P.: $q$-deformations of Virasoro algebra and conformal dimensions, Phys. Lett. B \textbf{262} (1), 32-38 (1991)

\bibitem{ChapBaSilnhomliecolor:ChaiKuLuk}
Chaichian, M., Kulish, P., Lukierski, J.: $q$-deformed Jacobi identity, $q$-oscillators and $q$-deformed infinite-dimensional algebras, Phys. Lett. B \textbf{237}, 401-406 (1990)

\bibitem{ChapBaSilnhomliecolor:ChaiPopPres}
Chaichian, M., Popowicz, Z., Pre\v{s}najder, P.: $q$-Virasoro algebra and its relation to the $q$-deformed KdV system, Phys. Lett. B \textbf{249}, 63-65 (1990)

\bibitem{ChapBaSilnhomliecolor:CM}
Chen, L., Ma, Y., Ni, L.: Generalized Derivations of Lie color algebras, Results Math., \textbf{63} (3-4), 923-936 (2013)

\bibitem{ChapBaSilnhomliecolor:CurtrZachos1}
Curtright, T. L., Zachos, C. K.: Deforming maps for quantum algebras, Phys. Lett. B \textbf{243}, 237-244 (1990)

\bibitem{ChapBaSilnhomliecolor:DamKu}
Damaskinsky, E. V., Kulish, P. P.: Deformed oscillators and their applications (in Russian), Zap. Nauch. Semin. LOMI 189, 37-74 (1991) (Engl. transl. in J. Sov. Math., 62, 2963-2986 (1992))

\bibitem{ChapBaSilnhomliecolor:DaskaloyannisGendefVir}
Daskaloyannis, C.: Generalized deformed Virasoro algebras, Modern Phys. Lett. A \textbf{7} no. 9, 809-816 (1992)

\bibitem{ChapBaSilnhomliecolor:DalTakh}
Daletskii, Y. L., Takhtajan, L. A.: Leibniz and Lie algebra structures for Nambu algebra, Lett. Math. Phys. \textbf{39}, 127-141 (1997)

\bibitem{ChapBaSilnhomliecolor:aip:review}
De~Azc{\'a}rraga, J.~A.,  Izquierdo, J.~M.:  $n$-ary algebras: a review with applications, J. Phys. A: Math. Theor. 43, 293001 (2010)

\bibitem{ChapBaSilnhomliecolor:Filippov:nLie}
Filippov, V. T.: $n$-Lie algebras, Siberian Math. J. \textbf{26}, 879-891 (1985)
(Transl. from Russian: Sib. Mat. Zh., 26 126-140 (1985))

\bibitem{ChapBaSilnhomliecolor:Gautheron:Rem}
Gautheron, Ph.: Some remarks concerning Nambu mechanics, Lett. Math. Phys. \textbf{37}, no. 1, 103-116 (1996)

\bibitem{ChapBaSilnhomliecolor:HLS}
Hartwig, J. T., Larsson, D., Silvestrov, S. D.:
Deformations of Lie algebras using $\sigma-$derivations, J. Algebra \textbf{295},  314-361 (2006)
(Preprint in Mathematical Sciences 2003:32, LUTFMA-5036-2003, Centre for Mathematical Sciences, Department of Mathematics, Lund Institute of Technology, 52 pp. (2003))

\bibitem{ChapBaSilnhomliecolor:HelSilbookqHeis}
Hellstr{\"o}m, L., Silvestrov, S. D.: Commuting Elements in $q$-Deformed Heisenberg Algebras, World Scientific, Singapore, 256 pp (1985) (ISBN: 981-02-4403-7)

\bibitem{ChapBaSilnhomliecolor:Hu}
Hu, N.: $q$-Witt algebras, $q$-Lie algebras, $q$-holomorph structure and representations,  Algebra Colloq. \textbf{6}, no. 1, 51-70 (1999)

\bibitem{ChapBaSilnhomliecolor:Kassel92}
Kassel, C.: Cyclic homology of differential operators, the virasoro algebra and a $q$-analogue, Comm. Math. Phys. 146 (2), 343-356 (1992)

\bibitem{ChapBaSilnhomliecolor:Kasymov:nLie}
Kasymov, Sh. M.: Theory of $n$-Lie algebras, Algebra and Logic. \textbf{26}, 155-166 (1987)
(Transl. from Russian: Algebra i Logika, Vol. 26, No. 3, pp. 277–297, (1987))

\bibitem{ChapBaSilnhomliecolor:KI1}
Kaygorodov, I.: On $\delta$-Derivations of n-ary algebras, Izvestiya: Mathematics, \textbf{76} (5) 1150-1162 (2012)

\bibitem{ChapBaSilnhomliecolor:KI2}
Kaygorodov, I.: $(n + 1)$-Ary derivations of simple n-ary algebras, Algebra and Logic, \textbf{50} (5) 470-471 (2011)

\bibitem{ChapBaSilnhomliecolor:KI3}
Kaygorodov, I.: $(n + 1)$-Ary derivations of semisimple Filippov algebras, Math. Notes, \textbf{96} (2) 208-216 (2014)

\bibitem{ChapBaSilnhomliecolor:KP}
Kaygorodov, I., Popov, Y.: Generalized derivations of (color) $n$-ary algebras, Linear and multilinear algebra, \textbf{64} (6) (2016)

\bibitem{ChapBaSilnhomliecolor:km:nary}
Kitouni, A., Makhlouf, A.: On structure and central extensions of $(n+1)$-Lie algebras induced by $n$-Lie algebras, arXiv:1405.5930 (2014)

\bibitem{ChapBaSilnhomliecolor:kms:nhominduced}
Kitouni, A., Makhlouf, A., Silvestrov, S.: On $(n+1)$-Hom-Lie algebras induced by $n$-Hom-Lie algebras Georgian Math. J. \textbf{23} no. 1, 75-95 (2016)

\bibitem{ChapBaSilnhomliecolor:LarssonSigSilvJGLTA2008}
Larsson, D., Sigurdsson, G., Silvestrov, S. D.: Quasi-Lie deformations on the algebra $\mathbb{F}[t]/(t^N)$,
J. Gen. Lie Theory Appl. \textbf{2}, 201-205 (2008)

\bibitem{ChapBaSilnhomliecolor:LS1}
Larsson, D., Silvestrov, S. D.: Quasi-Hom-Lie algebras, central extensions and $2$-cocycle-like identities, J. Algebra \textbf{288}, 321-344 (2005) (Preprints in Mathematical Sciences 2004:3, LUTFMA-5038-2004, Centre for Mathematical Sciences, Department of Mathematics, Lund Institute of Technology, Lund University (2004)).

\bibitem{ChapBaSilnhomliecolor:LS2}
Larsson, D., Silvestrov, S. D.: Quasi-Lie algebras. In "Noncommutative Geometry and Representation Theory in Mathematical Physics". Contemp. Math., 391, Amer. Math. Soc., Providence, RI, 241-248 (2005) (Preprints in Mathematical Sciences 2004:30, LUTFMA-5049-2004, Centre for Mathematical Sciences, Department of Mathematics, Lund Institute of Technology, Lund University (2004))

\bibitem{ChapBaSilnhomliecolor:LSGradedquasiLiealg}
Larsson, D., Silvestrov, S. D.: Graded quasi-Lie agebras, Czechoslovak J. Phys. \textbf{55}, 1473-1478 (2005)

\bibitem{ChapBaSilnhomliecolor:LS3}
Larsson, D., Silvestrov, S. D.: Quasi-deformations of $sl_2(\mathbb{F})$ using twisted derivations, Comm. in Algebra \textbf{35}, 4303-4318 (2007)

\bibitem{ChapBaSilnhomliecolor:LW}
Ling, W. X.:  On the structure of $n$-Lie algebras, PhD Thesis, University-GHS-Siegen, Siegen (1993)

\bibitem{ChapBaSilnhomliecolor:LiuKQuantumCentExt}
Liu, K. Q.: Quantum central extensions, C. R. Math. Rep. Acad. Sci. Canada \textbf{13} (4), 135-140 (1991)

\bibitem{ChapBaSilnhomliecolor:LiuKQCharQuantWittAlg}
Liu, K. Q.: Characterizations of the Quantum Witt Algebra, Lett. Math. Phys. \textbf{24} (4), 257-265 (1992)

\bibitem{ChapBaSilnhomliecolor:LiuKQPhDthesis}
Liu, K. Q.: The Quantum Witt Algebra and Quantization of Some Modules over Witt Algebra, PhD Thesis, Department of Mathematics, University of Alberta, Edmonton, Canada (1992)

\bibitem{ChapBaSilnhomliecolor:LodayPirash}
Loday, J-L.,  Pirashvili T.: Universal enveloping algebras of Leibniz algebras and (co)homology. Math. Ann. \textbf{296} no. 1, 139-158 (1993)

\bibitem{ChapBaSilnhomliecolor:ms:homstructure}
Makhlouf, A., Silvestrov, S. D.: Hom-algebra structures.
J. Gen. Lie Theory Appl. Vol \textbf{2} (2), 51-64 (2008)
(Preprints in Mathematical Sciences  2006:10, LUTFMA-5074-2006, Centre for Mathematical Sciences, Department of Mathematics, Lund Institute of Technology, Lund University (2006))

\bibitem{ChapBaSilnhomliecolor:Nambu:GenHD}
Nambu, Y.: Generalized Hamiltonian dynamics, Phys. Rev. D (3) \textbf{7}, 2405-2412 (1973)

\bibitem{ChapBaSilnhomliecolor:RichardSilvestrovJA2008}
Richard, L., Silvestrov, S. D.: Quasi-Lie structure of $\sigma$-derivations of $\mathbb{C}[t^{\pm1}]$, J. Algebra  319,  no. 3, 1285-1304 (2008)

\bibitem{ChapBaSilnhomliecolor:shenghomrep}
Sheng, Y.: Representation of Hom-Lie algebras, Algebr. Reprensent. Theory \textbf{15}, no. 6, 1081-1098 (2012)

\bibitem{ChapBaSilnhomliecolor:RM}
Rotkiewicz, M.: Cohomology ring of n-Lie algebras, Extracta Math. \textbf{20}, 219-232 (2005)

\bibitem{ChapBaSilnhomliecolor:SigSilvGLTbdSpringer2009}
Sigurdsson, G., Silvestrov, S.: Lie color and Hom-Lie algebras of Witt type and their central extensions, In "Generalized Lie theory in mathematics, physics and beyond", Springer, Berlin, 247-255 (2009)

\bibitem{ChapBaSilnhomliecolor:Czech:witt}
Sigurdsson, G., Silvestrov, S.: Graded quasi-Lie algebras of Witt type, Czech. J. Phys. 56: 1287-1291 (2006)

\bibitem{ChapBaSilnhomliecolor:Takhtajan:foundgenNambuMech}
Takhtajan, L. A.: On foundation of the generalized Nambu mechanics, Comm. Math. Phys., \textbf{160}, no. 2, 295-315 (1994)

\bibitem{ChapBaSilnhomliecolor:Takhtajan:cohomology}
Takhtajan, L. A.: Higher order analog of Chevalley-Eilenberg complex and deformation theory of $n$-gebras, St. Petersburg Math. J. \textbf{6} no. 2, 429-438 (1995)

\bibitem{ChapBaSilnhomliecolor:YauGenCom}
Yau, D.: A Hom-associative analogue of Hom-Nambu algebras, arXiv: 1005.2373 [math.RA] (2010)

\bibitem{ChapBaSilnhomliecolor:YauHomEnv}
Yau, D.: Enveloping algebras of Hom-Lie algebras, J. Gen. Lie Theory Appl. \textbf{2}, no. 2, 95-108 (2008)

\bibitem{ChapBaSilnhomliecolor:YauHomHom}
Yau, D.: Hom-algebras and homology, Journal of Lie Theory 19, No. 2, 409-421 (2009)

\bibitem{ChapBaSilnhomliecolor:YauHomNambuLie}
Yau, D.: On $n$-ary Hom-Nambu and Hom-Nambu-Lie algebras,  J. Geom. Phys. \textbf{62}, 506-522 (2012)

\end{thebibliography}
\end{document}